\theoremstyle{plain}
\newtheorem{prop}{Proposition}[section]
\newtheorem{thm}[prop]{Theorem}
\newtheorem{cor}[prop]{Corollary}
\newtheorem{conj}[prop]{Conjecture}
\theoremstyle{definition}
\newtheorem{defn}[prop]{Definition}
\newtheorem{example}[prop]{Example}
\newtheorem{rem}[prop]{Remark}
\numberwithin{equation}{section}
\DeclareMathOperator{\Hilb}{Hilb}
\DeclareMathOperator{\tr}{tr}
\DeclareMathOperator{\Proj}{Proj}
\DeclareMathOperator{\Spec}{Spec}
\DeclareMathOperator{\Hom}{Hom}
\DeclareMathOperator{\Tor}{Tor}
\newcommand{\scprod}[2]{\langle #1, #2 \rangle}
\newcommand{\C}{\mathbb{C}}
\newcommand{\N}{\mathbb{N}}
\newcommand{\Z}{\mathbb{Z}}
\newcommand{\PP}{\mathbb{P}}
\newcommand{\Q}{\mathbb{Q}}
\newcommand{\TT}{\mathbb{T}}
\newcommand{\Hs}[1]{\mathcal{H}_{#1}}
\newcommand{\shf}[1]{\mathcal{O}_{#1}}
\newcommand{\Dy}[1]{\mathcal{D}_{#1}}
\newcommand{\OO}[1]{\mathcal{O}_{#1}}
\newcommand{\llbracket}{{[\![}}
\newcommand{\rrbracket}{{]\!]}}
\newcommand{\xx}{{\mathbf x}}
\newcommand{\yy}{{\mathbf y}}
\newcommand{\aff}{\mathbb{A}^2}
\newcommand{\Hil}[1]{\Hilb^{#1}(\aff)}
\newcommand{\y}[1]{\mathcal{Y}_{#1}}
\newcommand{\mac}[1]{\widetilde{H}_{#1}(X,q,t)}
\newcommand{\fro}[2]{\mathcal{F}_{#1}(#2)}
\newcommand{\row}{\mathcal{R}ow}
\newcommand{\col}{\mathcal{C}ol}
\newcommand{\basis}[1]{\mathfrak{B}_{#1}}
\newcommand{\canonical}[1]{\mathfrak{C}_{#1}}
\begin{document}


\title{Nested Hilbert schemes and the nested q,t-Catalan series}

\author{Mahir Bilen Can}
\address{University of Western Ontario, Canada}
\email{mcan@uwo.ca}

\subjclass[2000]{Primary 14M15, Secondary 05E15}

\date{}

\keywords{Atiyah-Bott Lefschetz formula, (nested) Hilbert scheme of points, tangent spaces, diagonal coinvariants.}

\begin{abstract}
In this paper we study the tangent spaces of the smooth nested Hilbert scheme $\Hil{n,n-1}$ of points in the plane, and give a general formula for computing the Euler characteristic of a $\TT^2$-equivariant locally free sheaf on $\Hil{n,n-1}$. Applying our result to a particular sheaf, we conjecture that the result is a polynomial in the variables $q$ and $t$ with non-negative integer coefficients. We call this conjecturally positive polynomial as \textsl{the nested $q,t$-Catalan series}, for it has many conjectural properties similar to that of the $q,t$-Catalan series.
\end{abstract}

\maketitle


\section{\textbf{Introduction}}\label{intro}
Let $\mathfrak{S}_n$ be the symmetric group. The Frobenius character $\mathcal{F}$ is a map from the Grothendieck group $Rep(\mathfrak{S}_n)$ of representations of $\mathfrak{S}_n$ into the ring of symmetric functions that sends an irreducible representation $V^{\lambda}$ to the Schur function $s_{\lambda}$. 
Among the $\mathfrak{S}_n$-modules which has a (very) interesting image under the Frobenius map is the ring of diagonal coinvariants which is defined as follows. 

The symmetric group $\mathfrak{S}_n$ acts (diagonally) on the polynomial ring $\C[\mathbf{x},\mathbf{y}]:=\C[x_1,y_1,...,x_n,y_n]$ by $\sigma(x_i)=x_j,\ \sigma(y_i)=y_j$. The ring of diagonal coinvariants $R_n=\C[\mathbf{x},\mathbf{y}]/I_+$ is the quotient of $\C[\mathbf{x},\mathbf{y}]$ by the ideal $I_+$ generated by the invariant polynomials with no constant term. This is a bigraded ring (by degree), 
\begin{equation*}
R_n= \bigoplus_{r,s} (R_n)_{r,s},
\end{equation*}
and the action of $\mathfrak{S}_n$ respects the bigrading, and hence the two variable Frobenius series
\begin{equation*}
\fro{R_n}{q,t} = \sum_{r,s} \mathcal{F}((R_n)_{r,s}) q^r t^s
\end{equation*}
makes sense. Here $\mathcal{F}((R_n)_{r,s}) $ is the ordinary Frobenius character of the $\mathfrak{S}_n$-module $(R_n)_{r,s}$. 

There is a compact way of writing $\fro{R_n}{q,t}$, however, it requires 
(modified) Macdonald polynomials $\{ \mac{\mu} \}_{\mu \in \y{m}}$, which constitutes a vector space basis for the ring of symmetric functions $\Lambda^n_{\Q(q,t)}$. The existence of these polynommials is proved by I. Macdonald (see \cite{Mac95} for details).  A closed formula for an arbitrary $\mac{\mu}$ has been conjectured by Haglund \cite{Hag04} and later proved by Haglund, Haiman and Loehr \cite{HHL05}. 

The Bergeron-Garsia operator $\nabla: \Lambda^n_{\Q(q,t)} \rightarrow \Lambda^n_{\Q(q,t)}$ is defined by setting
\begin{equation*}
\nabla \mac{\mu} = t^{n(\mu )}q^{n(\mu ')} \mac{\mu},
\end{equation*}
where $n(\mu) = \sum (i-1)\mu_i$ and $\mu'$ is the conjugate partiton to 
$\mu$. 

The following highly nontrivial result about the Frobenius character (or series) of $R_n$ has been conjectured by Garsia and Haiman in \cite{GaHa96} and finally been proved by Haiman in \cite{Hai02}. 

\begin{thm}
The Frobenius character $\fro{R_n}{q,t}$ of the ring of diagonal coinvariants is equal to $\nabla e_n$, where $e_n$ is the $n^{th}$ elementary symmetric function. 
\end{thm}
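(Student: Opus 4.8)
The plan is to follow a geometric route through the Hilbert scheme of points $\Hil{n}$, realizing the diagonal coinvariant ring as the space of global sections (equivalently the equivariant Euler characteristic) of a naturally occurring sheaf, and then extracting its bigraded Frobenius character by torus localization. The first step is to introduce the \emph{isospectral Hilbert scheme} $X_n$, defined as the reduced fiber product of $\Hil{n}$ with $(\aff)^n$ over the symmetric product $(\aff)^n/\mathfrak{S}_n$, together with the \emph{Procesi bundle} $P=\rho_*\OO{X_n}$, an $\mathfrak{S}_n$-equivariant vector bundle on $\Hil{n}$ whose fibers should carry the regular representation. The structural fact to establish is that $X_n$ is Cohen--Macaulay, indeed Gorenstein. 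From this one deduces that $\rho$ is flat, that $P$ has rank $n!$, and that its fibers are precisely the Garsia--Haiman modules $D_\mu$; in particular the $n!$ theorem, $\dim D_\mu = n!$, follows.

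The deepest ingredient, and the main obstacle, is proving this Cohen--Macaulay property. It is not a formal consequence of the smoothness of $\Hil{n}$, but rests on a delicate commutative-algebra input. Following Haiman's strategy, I would reduce it to the freeness of the coordinate ring of the \emph{polygraph} $Z(n,l)\subseteq (\aff)^n\times(\aff)^l$ --- an explicit union of linear subspaces indexed by functions $\{1,\dots,l\}\to\{1,\dots,n\}$ --- as a module over the polynomial ring $\C[\xx,\yy]$. Establishing this freeness is a subtle induction on $l$ and on the combinatorics of the subspace arrangement, and it is where essentially all of the real work lies; once it is in hand, the geometric consequences (flatness of $\rho$, the Gorenstein property of $X_n$, and the fiber identification) follow by reasonably standard arguments about blowups and the Hilbert--Chow morphism.

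With the $n!$ theorem and the Gorenstein property secured, the final step is the character computation, which is exactly the sort the paper is geared toward. I would identify $\fro{R_n}{q,t}$ with the $\TT^2$-equivariant Euler characteristic of an appropriate twist of $P$ over the punctual Hilbert scheme $\Zero{n}$, and then apply the Atiyah--Bott--Lefschetz localization formula. This reduces the Euler characteristic to a sum over the finitely many $\TT^2$-fixed points of $\Hil{n}$, which are the monomial ideals indexed by partitions $\mu\vdash n$. The local contribution at the fixed point $I_\mu$ is governed by the tangent-space character and by the fiber character of $P$, which by the Gorenstein structure is exactly $\mac{\mu}$. Assembling the contributions should yield precisely the Macdonald-eigenoperator expansion
\begin{equation*}
\nabla e_n = \sum_{\mu \vdash n} \frac{t^{n(\mu)}q^{n(\mu')}(1-q)(1-t)\,\Pi_\mu B_\mu\,\mac{\mu}}{w_\mu},
\end{equation*}
completing the proof. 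The matching of the localization sum with this closed form is a direct, if intricate, verification using the known combinatorial formulas for arms, legs, and the hook products $w_\mu$ attached to each $\mu$.
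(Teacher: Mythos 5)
You should know at the outset that the paper contains no proof of this theorem to compare against: it is stated in the introduction as a known result, conjectured by Garsia and Haiman in \cite{GaHa96} and proved by Haiman in \cite{Hai02}, with the reader referred to the survey \cite{procesi}. So the only meaningful benchmark is Haiman's published argument, on which your outline is visibly modeled, and as a roadmap it is faithful to it: the isospectral Hilbert scheme $X_n$, the Procesi bundle $P$, the reduction of the Gorenstein property of $X_n$ to freeness of the polygraph ring, the resulting $n!$ theorem and the identification of the fibers of $P$ at the torus fixed points $I_\mu$ with the Garsia--Haiman modules of character $\mac{\mu}$, and finally localization over the fixed points; the closed formula you write is indeed the correct Macdonald expansion of $\nabla e_n$.

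As a proof, however, the proposal has two genuine gaps, one acknowledged and one not. The acknowledged gap is the polygraph theorem itself, which you rightly identify as carrying essentially all the weight but do not sketch; deferring it leaves a plan rather than a proof. The unacknowledged gap is more instructive: your parenthetical ``global sections (equivalently the equivariant Euler characteristic)'' conceals the second pillar of \cite{Hai02}, the vanishing theorem. Localization by itself computes only the Euler characteristic $\chi(P\otimes \shf{Z_n})$; to conclude that this rational function equals $\fro{R_n}{q,t}$ one must prove both that $H^i(Z_n, P\otimes\shf{Z_n})=0$ for $i>0$ and that $H^0(Z_n, P\otimes\shf{Z_n})\cong R_n$ as a doubly graded $\mathfrak{S}_n$-module. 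Neither is formal: the quotient map $(\aff)^n\rightarrow S^n(\aff)$ is not flat (its generic fibers have length $n!$ while $\dim_{\C}R_n=(n+1)^{n-1}$), so $R_n$ cannot simply be read off as ``the fiber over the origin'' of any family, and this is precisely why Haiman must establish the vanishing $H^{>0}(\Hil{n}, P\otimes B_n^{\otimes l})=0$ --- itself yet another application of the polygraph theorem --- and feed it through the Koszul resolution of $\shf{Z_n}$ (the resolution reproduced in this paper as Proposition~\ref{p:O_zero}) to identify $H^0$ with $\C[\xx,\yy]/I_+$ and kill the higher cohomology. Without this ingredient, the final step of your argument produces a localization sum that you cannot yet identify with the Frobenius series of $R_n$.
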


The proof of this theorem involves deep geometric properties of the Hilbert scheme of $n$ points in the plane, and nontrivial manipulations involving Macdonald polynomials. The reader who wishes to see a brief survey about the proof may wish to check \cite{procesi}. We should mention here that I. Gordon, \cite{Gordon03} using rational Cherednik algebras has verified conjectures of Haiman concerning Frobenius characters of the diagonal coinvariants for other finite Coxeter groups.

In this paper, we shall be concerned with the nested Hilbert scheme $\Hil{n,n-1}$. This is the parametrizing scheme of pairs $(Z_1,Z_2)$ of 0-dimensional subschemes in the plane such that $Z_1$ is a subscheme of $Z_2$. We shall employ the Atiyah-Bott Lefschetz formula to calculate (equivariant) Euler characteristics of vector bundles on the nested Hilbert scheme. Along the way, we shall present some combinatorial conjectures related to certain Euler characteristics.

\section{\textbf{Summary of the results}}

\subsection{Combinatorics} We shall consider the subspace $R_n^{\varepsilon}$ of $\mathfrak{S}_n$-alternating polynomials in $R_n$. This is the vector subspace of $R_n$ generated by the images of those polynomials $f\in A$ with 
\begin{equation*}
\sigma \cdot f=(-1)^{sign(\sigma)}f.
\end{equation*} 
There is a natural bigrading (by degree) on $R_n^{\varepsilon}$. So we write $R_n^{\varepsilon}= \bigoplus_{r,s} (R_n^{\varepsilon})_{r,s}$ and consider the Hilbert series 
\begin{equation*}
\Hs{R_n^{\varepsilon}}(q,t) = \sum_{r,s} \dim(R_n^{\varepsilon})_{r,s}q^r t^s \in \Z[q,t].
\end{equation*}
In \cite{Hai94}, it has been (empirically) observed by Haiman that the 
specialization $\Hs{R_n^{\varepsilon}}(q,t)$ at $q=t=1$ gives
the Catalan numbers
\begin{equation*}
\Hs{R_n^{\varepsilon}}(1,1)=\frac{1}{n+1} {2n\choose n}.
\end{equation*}
It is well known that the Catalan numbers count the ``Dyck paths,'' which are the lattice paths in the $xy$-plane, starting at $(0,0)$ and ending at $(n,n)$ with unit upward ($(0,1)-step$) and unit rightward ($(1,0)-step)$ segments, and staying weakly above the diagonal $x=y$ (see figure \ref{F:dyck8} below). Let $\Dy{n}$ denote the set of all Dyck paths of size $n$. Notice that a path $P\in \Dy{n}$ can be ended by a sequence 
$seq(P)=(a_1 a_2 \cdots a_{2n})$ of 0's and 1's, where an occurrence of 0 represents an upward step and an occurrence of 1 represents a rightward step. For example, the Dyck path in figure \ref{F:dyck8} below is represented by the sequence $seq(P)=(0010011101001011)$.

\begin{figure}[h]
\begin{center}
\scalebox{.5}{\input{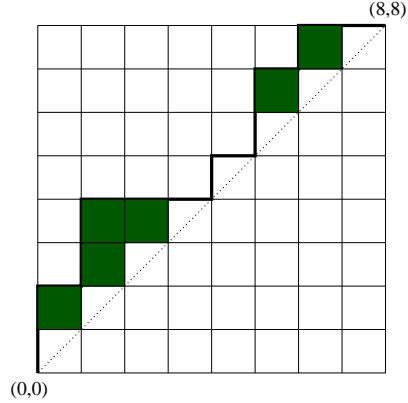}}
\caption{A Dyck path of size 8 with $area(P)=6$.}
\label{F:dyck8}
\end{center}
\end{figure}

One can ask the following natural question: could we find a pair of functions $s_1,s_2: \Dy{n} \rightarrow \N$ such that 
\begin{equation*}
\Hs{R_n^{\varepsilon}}(q,t) = \sum_{P\in \Dy{n}} q^{s_1(P)}t^{s_2(P)}.
\end{equation*}

An answer to this question has been conjectured by Haglund \cite{Hag03}, and proved later by Garsia and Haglund \cite{GaHag02}. For a basic introduction to the techniques of the proof and more, we recommend the recent book \cite{Hag07}.

Since it will be useful for our purposes as well, we shall describe the functions $s_1$ and $s_2$. 

The first function $s_1$ is, in a sense very classical; given a Dyck path $P\in \Dy{n}$, $s_1(P)$ is defined to be the ``$area$'' of $P$. This is the number of full cells below the path and above the main diagonal. It is not hard to see that $s_1(P)=area(P)$ is equivalent to the $coinv(seq(P))-{n+1 \choose 2},$ where $coinv$ of a sequence $(a_1\cdots a_{2n})$ is the number of pairs of indices $(i,j)$ such that $i<j$ and $a_i < a_j$.

The value $s_2(P)$ at $P\in \Dy{n}$ of the second function $s_2$ is called the $bounce$ number of the path $P\in \Dy{n}$. It is defined algorithmically as follows.

\textsl{Given $P\in \Dy{n}$, starting at $(n,n)$ we move leftward (in the negative $x$-axis direction) until encountering a lattice point $(j_1,n)$ such that the line segment $\overline{(j_1,n)(j_1,n-1)}$ is an upward step of $P$. Next, we start at the lattice point $(j_1,j_1)$ on the diagonal and move leftward  once again until encountering a lattice point $(j_2,j_1)$ such that the line segment $\overline{(j_2,j_1)(j_2,j_1-1)}$ is an upward step of $P$. Next, we start at $(j_2,j_2)$ on the diagonal and move leftward until encountering a lattice point $(j_3,j_2)$ such that the line segment $\overline{(j_3,j_2)(j_3,j_2-1)}$ is a rightward step of $P$. Once again we start at $(j_3,j_3)$ and repeat the  process. This continues until we reach to the point $(j_b,j_b)=(0,0)$ on the diagonal.}

\begin{defn}\label{D:bouncedefns}
Let $\{(0,0),(j_{b-1},j_{b-1}),...,(j_1,j_1)\}$ be the set of lattice points on the diagonal that we obtained by the above procedure. Let $b(P)$ be the Dyck path represented by the 
sequence
\begin{equation*}
seq(b(P))=(\underbrace{0\cdots0}_{j_{b-1}}\underbrace{1\cdots 1}_{j_{b-1}} \underbrace{0\cdots0}_{j_{b-2}}\underbrace{1\cdots1}_{j_{b-2}}\cdots \underbrace{1\cdots 1}_{j_1}).
\end{equation*}
\begin{itemize}
\item We shall call $b(P)$ as the $bounce\ path$ of $P$. 
\item The part of $P$ between the lattice points $(j_i,j_{i-1})$ and $(j_{i+1},j_i)$ is called the $i'th\ bounce\ section$ of $P$, and denoted by $\mathbf{B}_i$ (see figure \ref{F:bounceregion} below). 
\item The bounce number $s_2(P)$ is defined to be the sum
\begin{equation*}
s_2(P)=\sum_{i=1}^{b-1} n-j_i.
\end{equation*}
\end{itemize}
\end{defn}

\begin{figure}[h]
\begin{center}
\scalebox{.5}{\input{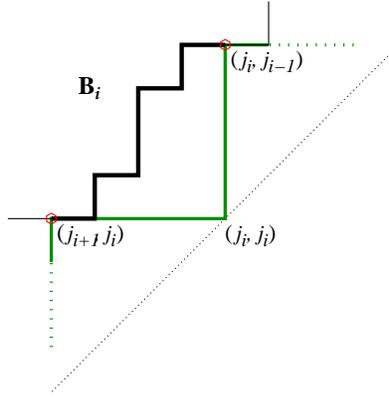}}
\caption{$i$th bounce region (between circles)}
\label{F:dyck8}
\end{center}
\end{figure}

\begin{example}
Consider the path given by the sequence $seq(P)=(0011001001101011)$, as in figure \ref{F:dyck8bounce} below. The sequence of the bounce path $b(P)$ is $seq(b(P))=(0011010001110011)$. Then, $j_1=6,j_2=3,j_3=2,j_4=0$. Therefore, the bounce number of $P$ is equal to $s_2(P)=2+5+6=13.$ 
\end{example}

Note that there exists a unique Dyck path $P_0$ with $s_2(P_0)=0$ and $s_1(P_0)= {n \choose 2}$. This the path with $n$ immediate upward steps. In other words,
\begin{equation}\label{E:biggest}
seq(P_0)= (00\cdots 011\cdots 1).
\end{equation}

\begin{figure}[h]
\begin{center}
\scalebox{.5}{\input{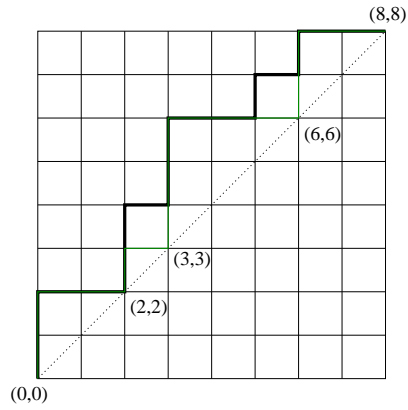}}
\caption{A Dyck path with $s_2(P)=bounce(P)=11$.}
\label{F:dyck8bounce}
\end{center}
\end{figure}

\begin{defn}
The $q,t$-Catalan series of size $n$ is defined to be the summation 
\begin{equation}
C_n(q,t)=\sum_{P\in \Dy{n}} q^{s_1(P)}t^{s_2(P)},
\end{equation}
where $s_1(P)$ and $s_2(P)$ are, respectively, the area and the bounce of the path $P$, as described above. 
\end{defn}

The stunning equality $\Hs{R_n^{\varepsilon}}(q,t) = C_n(q,t)$ is one of the good reasons to concentrate on the functions $s_1$ and $s_2$. Arguably a more stunning fact, which is due to Haiman is that the polynomial $C_n(q,t)$ can also be recovered by the Atiyah-Bott-Lefschetz formula applied to a certain locally free sheaf on the ``zero fiber'' of the Hilbert scheme of points in the plane. This is one of our main motivations for this article. Before dwelling into algebraic-geometric details, we would like to define combinatorially another family of $q,t$-polynomials similar to $C_n(q,t)$, and present a conjecture of the author and J. Haglund.

\begin{defn}\label{D:nestedcatalan}
Let $P_0$ be as in in \ref{E:biggest}.
Let $P\in \Dy{n+1}^0=\Dy{n+1}\setminus \{P_0\}$ be Dyck path defined by the sequence $seq(P)=(a_1a_2\cdots a_{2{n+1}})$.
Let $\mathbf{B}_i$ be the $i$th bounce section of $P$ as defined in \ref{D:bouncedefns}, and let $v_i=v_i(P)$ be the number of 01's (i.e., 0's immediately followed by 1's) in the part of the sequence $seq(P)$ which belongs to the bounce section $\mathbf{B}_i$. Define $s_3:\Dy{n+1}^0 \rightarrow \N$ by 
\begin{equation*}
s_3(P)=\sum_{i=1}^{b-1} j_i -1.
\end{equation*}
Finally we define the \textsl{combinatorial nested $q,t$-Catalan series} $N_n(q,t)$ to be the summation
\begin{equation}\label{E:nestedcatalan}
N_n(q,t)= \sum_{P\in \Dy{n+1}^0} q^{s_1(P)}t^{s_3(P)}(v_0+v_1 t + \cdots v_{b-1} t^{b-1}).
\end{equation}
\end{defn}

\begin{example}
The smallest example is for $n=2$. In that case, we have 4 Dyck paths of size 3 which are shown in figure \ref{F:nested2} below. It is easy to check that 
\begin{eqnarray*}
N_2(q,t) &=& q^0 t \cdot (1+t) + qt \cdot 1+ q t^0 \cdot 1 + q^2 t ^0 \cdot 1\\
 &=& q^2+q+qt+t+t^2.
\end{eqnarray*}
\end{example}

\begin{conj}
The combinatorial nested $q,t$-Catalan series $N_n(q,t)$ is symmetric in the variables $q,t$. Furthermore, we conjecture that
\begin{equation}
N_n(1,1)=\frac{n}{2} C_{n+1}^{(1)}(1,1)=\frac{n}{2(n+1)} {2(n+1) \choose n+1}.
\end{equation} 
\end{conj}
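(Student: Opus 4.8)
The plan is to treat the two assertions of the conjecture separately, as they are of very different depth: the evaluation at $q=t=1$ should reduce to a short enumeration, whereas the $q,t$-symmetry is the genuinely hard statement, being the nested analogue of the notoriously non-elementary symmetry of the ordinary $q,t$-Catalan series $C_n(q,t)$.

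For the evaluation, first I would set $q=t=1$ in \eqref{E:nestedcatalan}. The path $P_0$ contributes nothing, since it has a single bounce and the factor $v_0+\cdots+v_{b-1}$ is then the empty sum; hence passing from $\Dy{n+1}^0$ to $\Dy{n+1}$ is harmless and
\[
N_n(1,1)=\sum_{P\in\Dy{n+1}}\big(v_0(P)+v_1(P)+\cdots+v_{b-1}(P)\big).
\]
The crux is the combinatorial lemma that, for every $P\in\Dy{m}$,
\[
v_0(P)+\cdots+v_{b-1}(P)=\mathrm{peaks}(P)-1 ,
\]
where $\mathrm{peaks}(P)$ is the number of $01$-factors of $seq(P)$; equivalently the left side counts the valleys of $P$, via the identity $\#\mathrm{peaks}=\#\mathrm{valleys}+1$ for Dyck paths. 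I would prove this by analyzing how the bounce decomposition distributes the $01$-factors of $seq(P)$ among the sections $\mathbf{B}_i$: the section boundaries never fall in the interior of a $01$-factor, and the content of the lemma is that exactly one peak is systematically left uncounted, producing the single $-1$.

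Granting the lemma, the evaluation is classical. Writing $m=n+1$ and using the Narayana total $\sum_{P\in\Dy m}\mathrm{peaks}(P)=\tfrac12\binom{2m}{m}$ together with $|\Dy m|=C_m^{(1)}(1,1)=\tfrac1{m+1}\binom{2m}{m}$, we obtain
\[
N_n(1,1)=\tfrac12\binom{2m}{m}-\tfrac1{m+1}\binom{2m}{m}=\frac{m-1}{2(m+1)}\binom{2m}{m}=\frac{m-1}{2}\,C_m^{(1)}(1,1)=\frac n2\,C_{n+1}^{(1)}(1,1),
\]
which is the asserted value. The only real obstacle in this half is a fully general proof of the displayed lemma; everything after it is bookkeeping.

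For the symmetry $N_n(q,t)=N_n(t,q)$ I would not attempt a direct statistic-reversing bijection, since none is known even for $C_n(q,t)$, but would instead pass to geometry. The paper realizes $N_n(q,t)$ as an Atiyah--Bott--Lefschetz equivariant Euler characteristic $\chi^{\TT^2}(\Hil{n,n-1},E)$ of a distinguished $\TT^2$-equivariant locally free sheaf $E$. The involution $\tau$ of $\aff$ interchanging the two coordinate axes induces an automorphism $\tilde\tau$ of $\Hil{n,n-1}$ that swaps the two factors of $\TT^2$, hence swaps the weights $q\leftrightarrow t$, and permutes the finite $\TT^2$-fixed locus. If one verifies the self-duality $\tilde\tau^{*}E\cong E$ (up to the weight swap), then the equivariant Euler characteristic is invariant under $q\leftrightarrow t$ and the symmetry follows. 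I expect this to be the main obstacle of the whole conjecture: one must first confirm that the localization sum produced by the general formula of the paper really equals $N_n(q,t)$ term by term, and then check $\tilde\tau^{*}E\cong E$ against the explicit description of the tangent spaces and fixed points of $\Hil{n,n-1}$. As an alternative route I would try to write $N_n(q,t)$ as a scalar product $\langle\nabla(\,\cdot\,),\,\cdot\,\rangle$ in the symmetric-function calculus, where $q,t$-symmetry is intrinsic to the spectrum of $\nabla$ on the modified Macdonald basis; the difficulty there is to identify the correct symmetric-function lift of the bounce-section weighting $v_0+v_1t+\cdots+v_{b-1}t^{b-1}$.
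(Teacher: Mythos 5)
The first thing to note is that the paper does not prove this statement at all: it is stated as a conjecture, supported only by the $n=2$ example and by Maple experiments, so there is no argument of the author's to compare yours against. Judged on its own, your proposal splits into one half that could be turned into an actual proof and one half that is not a proof.

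The $q=t=1$ evaluation is the salvageable half, but the justification of your key lemma is false as stated. You assert that ``the section boundaries never fall in the interior of a $01$-factor''; the opposite happens systematically. By construction $P$ arrives at every boundary point $(j_{i+1},j_i)$ by an upward step, so the boundary splits a peak whenever the next step of $P$ is rightward; for $seq(P)=(010101)$ all three peaks straddle section boundaries. The paper's definition of $v_i$ is in fact ambiguous on exactly this point, and it is only the worked $n=2$ example (the $v$-polynomial of $(010101)$ is $1+t$, not $0$ and not $1+t+t^2$) that pins down the intended convention: a straddling peak is assigned to the section containing its $1$ (its rightward step). With that convention your lemma $\sum_i v_i(P)=\mathrm{peaks}(P)-1$ is correct: the unique uncounted peak is the one at $(j_1,n)$, whose rightward step lies in the final horizontal run, and such a peak always exists since $j_1<n$. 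Granting the lemma, your Narayana computation is right and yields $N_n(1,1)=\frac n2 C_{n+1}^{(1)}(1,1)=\frac{n}{2(n+2)}\binom{2(n+1)}{n+1}$. Note that this actually exposes a typo in the conjecture as printed: the displayed binomial $\frac{n}{2(n+1)}\binom{2(n+1)}{n+1}$ gives $20/3$ at $n=2$, whereas $N_2(1,1)=5$; your value, not the printed one, is consistent with the paper's own example.

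For the $q,t$-symmetry you offer only a strategy, and its gap is the one you name yourself: identifying $N_n(q,t)$ with the equivariant Euler characteristic $\chi_{\eta^*(\shf{}(1)\otimes \shf{Z_n})}(q,t)$ is precisely the paper's \emph{other} conjecture (the one relating the combinatorial series to the nested Hilbert scheme), which is open; an argument conditional on it proves nothing unconditionally. Moreover, granted that identification, your involution/self-duality machinery is unnecessary: the explicit localization formula the paper derives for this Euler characteristic is manifestly invariant under $q\leftrightarrow t$ combined with conjugation $(\mu,\nu)\mapsto(\mu',\nu')$ of the index set $\y{n,n-1}$, since conjugation exchanges arms with legs, $n(\mu)$ with $n(\mu')$, and the factors $P_2$ with $P_3$, while fixing $P_1$ and the numerator. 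So the entire difficulty of the symmetry assertion is concentrated in that identification conjecture, and your proposal leaves it untouched.
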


See also (\ref{E:ncatalans}) below.

\begin{figure}[h]
\begin{center}
\scalebox{.5}{\input{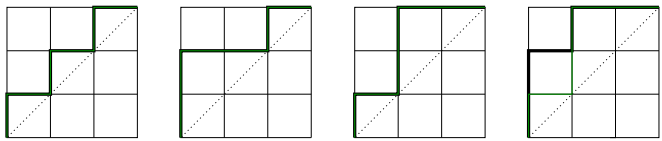}}
\caption{The elements of $\Dy{3}^0$ with their bounce paths.}
\label{F:nested2}
\end{center}
\end{figure}

\subsection{Geometry}

The Hilbert schemes of points in the plane are, arguably, among celebrities of the smooth algebraic varieties (use http://www.ams.org/mathscinet/). Since their introduction, these spaces has been well studied and understood from different perspectives. For example, in \cite{deCatMig02}, de Cataldo and Migliorini have computed the Chow motives and the Chow groups of Hilbert schemes of points. In \cite{Gordon05}, Gordon and Stafford have investigated the relationship between rational Cherednik algebras and the Hilbert schemes.
In \cite{Sam06}, Boissi\`{e}re investigates the relationship between McKay correspondence and $K$-theory of $\Hil{n}$, etc.. The literature on $\Hil{n}$ is vast, therefore, sadly we cannot avoid doing unjust towards authors whose work we can't mention here. Here are the few places that we have benefited from while trying to learn about these wonderful varieties. \cite{Iarrobino77}, \cite{EllStrom87}, \cite{Gott91}, \cite{Bertram99}, \cite{Nakajima99}, \cite{Hai01}. The reader who would like to see more on the literature may wish to check the references in \cite{Nakajima99}.

To continue with our extended introduction, we shall fix the following notation once and for all. We shall exclusively be working over the field of complex numbers. $\aff =\aff_{\C}$ shall denote the affine plane over $\C$. The Hilbert scheme of $n$ points in $\aff$ shall be denoted by $\Hil{n}$.
It can be identified with (at least set theoretically) the set of ideals $I$ in the polynomial ring $\C[x,y]$ such that the quotient ring $\C[x,y]/I$, as a vector space is of dimension $n$. Then, the nested Hilbert scheme $\Hil{n,n-1}$ can be identified with the pairs of ideals 
\begin{equation*}
(I_1,I_2)\in \Hil{n} \times \Hil{n-1},
\end{equation*}  
such that $I_1 \subseteq I_2$. We should mention here that both $\Hil{n}$ and $\Hil{n,n-1}$ are nonsingular and irreducible.

As $GL_2$ acts on $\aff$, so does the maximal torus of the diagonal invertible matrices. We shall identify $\TT^2$ with $\C^* \times \C^*$. Its action on $\aff$ passes onto $\Hil{n}$ and $\Hil{n,n-1}$.

The set of fixed points of the torus $\TT^2$ on $\Hil{n}$ is indexed by the partitions $\mu$ of $n$. 
Let $\y{n}$ be the set of partitions of the positive integer $n$. The fixed point set on $\Hil{n,n-1}$ is indexed by the pairs of partitions $(\mu,\nu)\in \y{n}\times \y{n-1}$ such that the Young diagram of $\nu$ is contained in that of $\mu$. We shall denote the set of all such pairs of partitions by $\y{n,n-1}$. Let $a(\zeta)$ and $l(\zeta)$ denote the arm and leg of the ``corner cell'' $\zeta\in \mu$ that we need to take off to get $\nu$. The notations $\row(\zeta)$ and $\col(\zeta)$ stand for the row and column of the cell $\zeta\in \mu$ in the diagram of the partition $\mu$. We shall explain these definitions in more detail in the next section.

Now we can state one of our main results. 

\begin{thm}\label{T:nestedABL}
Let $M$ be a $\TT^2$-equivariant vector bundle on $\Hil{n,n-1}$, and $\Hs{M(I_{\mu},I_{\nu})}(q,t)$ denote the Hilbert series of the fiber $M(I_{\mu}, I_{\nu})$ of $M$ at the torus fixed point $(I_{\mu},I_{\nu})\in \Hil{n,n-1}$, where $\nu$ is a partition induced from $\mu$ by taking off a corner cell $\zeta \in \mu$ (we write $\nu = \mu \setminus \{\zeta\}$). 
Then, the equivariant Euler characteristic of $M$ is 
\begin{equation}\label{E:NAB}
\chi _{M}(q,t) = \sum _{(\mu,\nu)\in \y{n,n-1}} \frac{\Hs{M(I_{\mu},I_{\nu})}(q,t)}{ (1-t)(1-q)P_1(\mu,\nu)P_2(\mu,\nu)P_3(\mu,\nu)},
\end{equation}
where $\nu$ differs from $\mu$ by a corner cell $\zeta\in \mu$ such that $P_1,P_2$ and $P_3$ are given by
\begin{eqnarray*}
P_1(\mu,\nu) &=& \prod_{\zeta \in \mu \setminus (
\row(\zeta)\cup \col(\zeta))} (1-t^{1+l(\zeta)}q^{-a(\zeta)})(1-t^{-l(\zeta)}q^{1+a(\zeta)}),\\
P_2(\mu,\nu) &=& \prod_{\zeta \in \row(\zeta) }(1-t^{1+l(\zeta)}q^{-a(\zeta)})(1-t^{-l(\zeta)}q^{a(\zeta)}),\\
P_3(\mu,\nu) &=& \prod_{\zeta \in \col(\zeta) }(1-t^{-l(\zeta)}q^{1+a(\zeta)})(1-t^{l(\zeta)}q^{-a(\zeta)}).
\end{eqnarray*}
\end{thm}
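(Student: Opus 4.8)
The plan is to apply the Atiyah--Bott holomorphic Lefschetz fixed point formula, in its equivariant $K$-theoretic form, to the smooth variety $\Hil{n,n-1}$, and then to reduce the entire statement to an explicit computation of the $\TT^2$-character of the tangent space at each torus fixed point. Since the $\TT^2$-action on $\aff$ has the origin as its unique fixed point, the induced action on $\Hil{n,n-1}$ has the finite, hence proper, fixed locus $\{(I_\mu,I_\nu) : (\mu,\nu)\in \y{n,n-1}\}$; consequently the localization theorem applies even though $\Hil{n,n-1}$ is only quasi-projective, exactly as in Haiman's treatment of $\Hil{n}$ \cite{Hai01,Nakajima99}. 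Writing each tangent space $T_{(\mu,\nu)}=T_{(I_\mu,I_\nu)}\Hil{n,n-1}$ as a sum of one-dimensional $\TT^2$-weight spaces, localization yields
\begin{equation*}
\chi_M(q,t)=\sum_{(\mu,\nu)\in \y{n,n-1}}\frac{\Hs{M(I_\mu,I_\nu)}(q,t)}{\prod_{w}(1-w)},
\end{equation*}
where $w$ runs over the tangent weights of $T_{(\mu,\nu)}$ and the numerator is precisely the Hilbert series of the fiber. Thus the theorem is equivalent to the identity $\prod_{w}(1-w)=(1-t)(1-q)\,P_1(\mu,\nu)P_2(\mu,\nu)P_3(\mu,\nu)$.

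To compute the tangent weights, I would use the fiber-product description of the tangent space of the nested Hilbert scheme. Setting $A=\C[x,y]$, a first order deformation of a nested pair $I_\mu\subseteq I_\nu$ is a compatible pair of deformations of $I_\mu$ and of $I_\nu$, so that
\begin{equation*}
T_{(\mu,\nu)}=\Hom(I_\mu,A/I_\mu)\times_{\Hom(I_\mu,A/I_\nu)}\Hom(I_\nu,A/I_\nu),
\end{equation*}
the two structure maps being push-forward along $A/I_\mu\to A/I_\nu$ and restriction along $I_\mu\hookrightarrow I_\nu$. This gives a left-exact sequence
\begin{equation*}
0\to T_{(\mu,\nu)}\to \Hom(I_\mu,A/I_\mu)\oplus \Hom(I_\nu,A/I_\nu)\xrightarrow{\ \delta\ }\Hom(I_\mu,A/I_\nu).
\end{equation*}
Because $\Hil{n,n-1}$ is smooth of dimension $2n$, a comparison of dimensions (using the known value $\dim \Hom(I_\mu,A/I_\nu)=2(n-1)$) forces $\delta$ to be surjective, so as $\TT^2$-characters
\begin{equation*}
\ch\,T_{(\mu,\nu)}=\ch\,\Hom(I_\mu,A/I_\mu)+\ch\,\Hom(I_\nu,A/I_\nu)-\ch\,\Hom(I_\mu,A/I_\nu).
\end{equation*}
Each term is the character of a Hom-module between monomial ideals, which I would read off from the $\TT^2$-equivariant free resolutions of $I_\mu$ and $I_\nu$ in the style of Ellingsrud--Str\o mme \cite{EllStrom87}.

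The two diagonal terms are the classical tangent characters of $\Hil{n}$ and $\Hil{n-1}$: the tangent weights of $\Hil{n}$ at $I_\mu$ are $\{t^{1+l(\zeta)}q^{-a(\zeta)},\,t^{-l(\zeta)}q^{1+a(\zeta)}:\zeta\in\mu\}$, with $a,l$ taken in $\mu$, and similarly for $\nu$. Write $\zeta_0$ for the corner cell with $\nu=\mu\setminus\{\zeta_0\}$. The combinatorial engine is the elementary observation that removing $\zeta_0$ changes the arm and leg of a cell $\zeta\in\nu$ only when $\zeta$ lies in the row or the column of $\zeta_0$: arms drop by one in $\row(\zeta_0)$, legs drop by one in $\col(\zeta_0)$, and both are unchanged elsewhere. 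Partitioning the cells of $\mu$ into $\{\zeta_0\}$, the row of $\zeta_0$, the column of $\zeta_0$, and the remaining cells, I would match the surviving tangent weights (the $\mu$- and $\nu$-contributions not cancelled by $\ch\,\Hom(I_\mu,A/I_\nu)$) against the four factors: cells off the row and column contribute the unshifted pairs $P_1$, the row cells contribute the arm-shifted pairs $P_2$, the column cells contribute the leg-shifted pairs $P_3$, and the corner cell $\zeta_0$ (with $a(\zeta_0)=l(\zeta_0)=0$) contributes the single factor $(1-t)(1-q)$.

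The main obstacle is the explicit determination of the off-diagonal character $\ch\,\Hom(I_\mu,A/I_\nu)$ and the verification that it cancels exactly the right half of the combined diagonal weights, cell by cell, so that the three product types $P_1,P_2,P_3$ emerge cleanly. Establishing the fiber-product description together with the surjectivity of $\delta$ (equivalently, that the scheme-theoretic tangent space has the expected dimension $2n$) is the structural input that legitimizes passing to the virtual character; once this and the resolution computation are in hand, the remaining identification is the bookkeeping of arm--leg shifts sketched above.
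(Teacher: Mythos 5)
Your framework is the same as the paper's: Thomason localization over the finite fixed locus, combined with Cheah's identification of the tangent space at $(I_\mu,I_\nu)$ with $\ker(\delta)$, $\delta=\phi-\psi$, so for both of you the whole theorem reduces to showing that the $\TT^2$-weights of that kernel multiply out to $(1-t)(1-q)P_1(\mu,\nu)P_2(\mu,\nu)P_3(\mu,\nu)$. Where you genuinely diverge is in how those weights are obtained: the paper exhibits an explicit eigenbasis of the kernel (its sets $A_1,\dots,A_8$ built from the arrow homomorphisms $d^*_{hk},u^*_{hk}$), whereas you propose the virtual identity $\ch T_{(\mu,\nu)}=\ch\Hom(I_\mu,A/I_\mu)+\ch\Hom(I_\nu,A/I_\nu)-\ch\Hom(I_\mu,A/I_\nu)$, with $A=\C[x,y]$. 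That route can be made to work, but as written your proposal has a genuine gap, and it sits exactly where the content of the theorem is.

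Two things are missing. First, surjectivity of $\delta$ is not free: you invoke ``the known value $\dim\Hom(I_\mu,A/I_\nu)=2(n-1)$,'' but this is not a citable standard fact, and, given smoothness of $\Hil{n,n-1}$ (hence $\dim\ker\delta=2n$), it is logically equivalent to the surjectivity you want; semicontinuity from the generic locus only gives $\dim\Hom(I_\mu,A/I_\nu)\geq 2(n-1)$, which is the wrong direction. Second, and decisively, you never compute $\ch\Hom(I_\mu,A/I_\nu)$ — you flag it yourself as ``the main obstacle.'' What has to be proved is the character identity
\begin{align*}
\ch\Hom(I_\mu,A/I_\nu)&=\sum_{\alpha\in\row(\zeta)}\bigl(t^{-l_{\mu}(\alpha)}q^{1+a_{\mu}(\alpha)}+t^{1+l_{\nu}(\alpha)}q^{-a_{\nu}(\alpha)}\bigr)+\sum_{\alpha\in\col(\zeta)}\bigl(t^{1+l_{\mu}(\alpha)}q^{-a_{\mu}(\alpha)}+t^{-l_{\nu}(\alpha)}q^{1+a_{\nu}(\alpha)}\bigr)\\
&\quad+\sum_{\alpha}\bigl(t^{1+l_{\mu}(\alpha)}q^{-a_{\mu}(\alpha)}+t^{-l_{\mu}(\alpha)}q^{1+a_{\mu}(\alpha)}\bigr),
\end{align*}
the last sum over $\alpha\in\mu\setminus(\row(\zeta)\cup\col(\zeta)\cup\{\zeta\})$; only after subtracting this from the two diagonal characters do the survivors organize into the corner factor $(1-t)(1-q)$ times $P_1P_2P_3$. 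Note the cancellation is crossed: on $\row(\zeta)$ it is the $u$-type $\mu$-weight and the $d$-type $\nu$-weight that disappear, leaving $t^{1+l_{\mu}}q^{-a_{\mu}}$ and $t^{-l_{\nu}}q^{1+a_{\nu}}=t^{-l_{\mu}}q^{a_{\mu}}$ (that is, $P_2$), and the roles of $u$ and $d$ flip on $\col(\zeta)$. Asserting this answer is not deriving it, and the step is genuinely delicate — it is in fact the one place where the paper's own argument slips: its sets $A_4,A_5,A_6$ are not in general contained in $\ker\delta$, and the weights it lists for $A_5,A_6$ do not multiply out to the stated $P_2,P_3$ (the correct bookkeeping exchanges $d^*$ and $u^*$ on the $\nu$-side), as one already sees for $\mu=(2)\supset\nu=(1)$, where the kernel weights are $\{t,\,q,\,q,\,tq^{-1}\}$. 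To complete your proof you must establish the displayed formula, e.g.\ by applying $\Hom(-,A/I_\nu)$ to the Hilbert--Burch resolution of $I_\mu$ and carrying out the cell-by-cell analysis; this computation simultaneously yields the dimension claim needed in your first step.
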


Let $\Lambda^n_{\Q(q,t)}$ be the space of homogeneous symmetric functions of degree $n$ on a set of algebraically independent variables $\{x_1,x_2,x_3...\}$ over the field of rational functions $\Q(q,t)$. As vector space, $\Lambda^n_{\Q(q,t)}$ has quite a few different distinguished bases. One of them is the basis $\{e_{\mu}\}_{\mu \in \y{n}}$ of elementary symmetric functions, defined as follows. Suppose $\mu\in \y{n}$ is a partition with parts $(\mu_1,...,\mu_k)$. Then, for $i=1...k$ define
\begin{equation*}
e_{\mu_i}= \sum_{1\leq j_1<\cdots < j_{\mu_i}} x_{j_1} x_{j_2}\cdots x_{j_{\mu_i}},\ \text{and}\ e_{\mu} = e_{\mu_1}\cdots e_{\mu_k}.
\end{equation*}
Another basis for $\Lambda^n_{\Q(q,t)}$ is given by the (modified) Macdonald polynomials 
$\mac{\mu}$, whose existence is proved by I. Macdonald (see \cite{Mac95} for details).  A closed formula for these polynomials has been conjectured by Haglund \cite{Hag04} and later proved by Haglund, Haiman and Loehr \cite{HHL05}. 

The Bergeron-Garsia operator $\nabla$ on $\Lambda^n_{\Q(q,t)}$ is defined by declaring Macdonald polynomials as eigenfuntions such that 
\begin{equation*}
\nabla \mac{\mu} = t^{n(\mu )}q^{n(\mu ')} \mac{\mu},
\end{equation*}
where $n(\mu) = \sum (i-1)\mu_i$ and $\mu'$ is the conjugate partiton to 
$\mu$.

The following theorem, which we address by \textit{the $q,t$-Catalan theorem}is a culmination of the works of many mathematicians.

\begin{thm}\label{T:qtCat}(Garsia, Haglund, Haiman, \cite{GaHag02},\cite{GaHa96}, \cite{Hai98}, \cite{Hai02}) \label{T:qtCat}
For all $n\geq 1$, the Hilbert series $\Hs{R_n^{\varepsilon}}(q,t)$ of the space of diagonal
alternating coinvariants $R_n^{\varepsilon}$ is equal to each of the following quantities below:
\begin{enumerate}
\item The multiplicity $\scprod{\fro{R_n}{q,t}}{e_n}=\scprod{\nabla e_n}{e_n}$ of the sign character $s_{1^n}=e_n$ in the $\mathfrak{S}_n$-module $R_n$, where $\langle, \rangle$ is the Hall scalar product on the space
of symmetric functions.   
\item The $q,t$-Catalan series
\begin{equation*}
C_n(q,t) = \sum_{P\in \Dy{n}} q^{s_1(P)}t^{s_2(P)},
\end{equation*}
where $s_1$ and $s_2$ are the area and bounce functions on $\Dy{n}$. 
\item The rational function
\begin{equation*}
\sum_{\mu \in \y{n}} \frac{ t^{n(\mu )}q^{n(\mu ')}(1-q)(1-t) \Pi _{\mu }(q,t) B_{\mu
}(q,t) }{ \prod _{\zeta \in \mu} (1 - t^{1+l(\zeta)}q^{-a(\zeta)}) (1 -
t^{-l(\zeta)}q^{1+a(\zeta)})},
\end{equation*}
where $B_{\mu}(q,t) = \sum _{(r,s)\in \mu}t^{r}q^{s}$, $\Pi _{\mu }(q,t)= \prod _{(r,s)\in \mu\setminus (0,0)} (1-t^{r}q^{s})$, and $a(\zeta), l(\zeta)$ are the lengths of the arm and leg (respectively)
of the cell $\zeta\in\mu$.

\item The Euler characteristic of a certain $\TT^2$-equivariant sheaf supported on the zero fiber $Z_n$
of the Hilbert scheme $\Hil{n}$.

\end{enumerate}
\end{thm}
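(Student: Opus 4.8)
The plan is to treat $\Hs{R_n^{\varepsilon}}(q,t)$ as the hub and to establish separately that it equals each of the four listed quantities, rather than attacking them all at once. The cleanest logical order is: first reduce the Hilbert series of the alternants to the inner product $\scprod{\nabla e_n}{e_n}$ (this is item (1)); then derive the explicit rational function (3) from (1) by a symmetric-function computation; then realize that same rational function as the equivariant Euler characteristic in (4) by Atiyah--Bott--Lefschetz localization; and finally identify the whole thing with the combinatorial sum (2). For the reduction to (1): in each bidegree the alternating subspace $(R_n^{\varepsilon})_{r,s}$ has dimension equal to the multiplicity of the sign representation $V^{1^n}$ in $(R_n)_{r,s}$, and since $\mathcal{F}(V^{1^n})=s_{1^n}=e_n$ and the Hall pairing is orthonormal on Schur functions, weighting by $q^r t^s$ and summing gives $\Hs{R_n^{\varepsilon}}(q,t)=\scprod{\fro{R_n}{q,t}}{e_n}$; invoking Haiman's theorem $\fro{R_n}{q,t}=\nabla e_n$ recalled above yields $\Hs{R_n^{\varepsilon}}(q,t)=\scprod{\nabla e_n}{e_n}$, which is (1). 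I expect essentially all of the difficulty to sit in the passage $(1)=(2)$.

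For $(1)\Rightarrow(3)$ I would expand $e_n$ in the modified Macdonald basis. The classical Garsia--Haiman formula reads
\begin{equation*}
e_n = \sum_{\mu \in \y{n}} \frac{(1-q)(1-t)\,B_{\mu}(q,t)\,\Pi_{\mu}(q,t)}{t^{n(\mu)}q^{n(\mu')}\,w_{\mu}(q,t)}\,\mac{\mu},
\end{equation*}
where $w_{\mu}(q,t)=\prod_{\zeta\in\mu}(1-t^{1+l(\zeta)}q^{-a(\zeta)})(1-t^{-l(\zeta)}q^{1+a(\zeta)})$ is the denominator appearing in (3). Since $\nabla$ rescales $\mac{\mu}$ by $t^{n(\mu)}q^{n(\mu')}$, applying it clears exactly that factor, leaving $\nabla e_n=\sum_{\mu}\frac{(1-q)(1-t)B_{\mu}\Pi_{\mu}}{w_{\mu}}\mac{\mu}$. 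Pairing with $e_n=s_{1^n}$ and using the standard evaluation $\scprod{\mac{\mu}}{e_n}=t^{n(\mu)}q^{n(\mu')}$ (the coefficient of $s_{1^n}$ in $\mac{\mu}$) reproduces the summand of (3) term by term. This step is routine once the expansion of $e_n$ and the value of $\scprod{\mac{\mu}}{e_n}$ are in hand.

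For $(3)=(4)$ I would apply the Atiyah--Bott--Lefschetz fixed-point formula to $\Hil{n}$, in the spirit of the author's Theorem~\ref{T:nestedABL} but for the unnested scheme. The torus fixed points are indexed by $\mu\in\y{n}$, and Haiman's character formula for the tangent space,
\begin{equation*}
\ch\, T_{I_{\mu}}\Hil{n} = \sum_{\zeta\in\mu}\bigl(t^{-l(\zeta)}q^{1+a(\zeta)} + t^{1+l(\zeta)}q^{-a(\zeta)}\bigr),
\end{equation*}
supplies exactly the weights whose $\prod(1-\text{weight})$ is $w_{\mu}(q,t)$, the denominator in (3). It then remains to exhibit the $\TT^2$-equivariant sheaf on the zero fiber $Z_n$ whose fiber character at $I_{\mu}$ is $t^{n(\mu)}q^{n(\mu')}(1-q)(1-t)\Pi_{\mu}(q,t)B_{\mu}(q,t)$; localization assembles these local contributions into the sum (3), so that the Euler characteristic in (4) equals (3) by construction.

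The main obstacle is $(1)=(2)$, the Garsia--Haglund identity $\scprod{\nabla e_n}{e_n}=\sum_{P\in\Dy{n}}q^{s_1(P)}t^{s_2(P)}$. My plan is to show both sides obey the same recursion and base case. On the combinatorial side one stratifies $\Dy{n}$ by the data of the bounce path (the lengths $j_1,j_2,\dots$), producing a recursion for $C_n(q,t)$ whose coefficients are $q$-binomial coefficients. On the symmetric-function side one must derive the \emph{identical} recursion for $\scprod{\nabla e_n}{e_n}$, and this is where the real work lies: it requires delicate plethystic manipulations of the operator $\nabla$, the addition/Pieri rules for the modified Macdonald polynomials $\mac{\mu}$, and the summation identities at the heart of Macdonald theory. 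Matching the two recursions together with the base case then completes the equivalence, and yields the $q,t$-symmetry of $C_n(q,t)$ as a byproduct.
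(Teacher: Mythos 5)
The first thing to note is that the paper contains no proof of this theorem at all: it is quoted as background (``a culmination of the works of many mathematicians''), with the actual arguments living in the cited references \cite{GaHag02}, \cite{GaHa96}, \cite{Hai98}, \cite{Hai02}. So there is no internal proof to compare yours against; what can be said is whether your roadmap matches how the literature actually establishes the result, and it does. Your item (1) is correct: the dimension of $(R_n^{\varepsilon})_{r,s}$ is the sign-multiplicity of $(R_n)_{r,s}$, Schur orthonormality of the Hall pairing converts this to $\scprod{\fro{R_n}{q,t}}{e_n}$, and Haiman's theorem $\fro{R_n}{q,t}=\nabla e_n$ (quoted in the paper's introduction) finishes it. Your passage $(1)\Rightarrow(3)$ is also correct, including the normalization bookkeeping: the denominator in item (3) equals $t^{-n(\mu)}q^{-n(\mu')}\tilde{h}_{\mu}\tilde{h}'_{\mu}$ in the Garsia--Haiman notation, so the factor $t^{n(\mu)}q^{n(\mu')}$ produced by $\nabla$ and by $\scprod{\mac{\mu}}{e_n}=t^{n(\mu)}q^{n(\mu')}$ lands exactly where item (3) needs it. Your $(3)=(4)$ is Haiman's localization argument; the paper itself identifies the sheaf as $\shf{}(1)\otimes\shf{Z_n}$ immediately after the theorem statement and reproduces the computation (Koszul resolution giving the fiber character $t^{n(\mu)}q^{n(\mu')}B_{\mu}(1-t)(1-q)\Pi_{\mu}$, then Atiyah--Bott) in Section \ref{S:specialcase}.

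Two caveats. First, you correctly locate all the difficulty in $(1)=(2)$, but your treatment of it is a placeholder, not a proof: naming ``the same recursion with $q$-binomial coefficients'' and ``delicate plethystic manipulations'' describes the shape of \cite{GaHag02} without carrying out any of it, and that recursion for $\scprod{\nabla e_n}{e_n}$ is the entire content of the Garsia--Haglund paper. As a blind proposal this is honest but incomplete, and it is the one step where your write-up could not be expanded into a proof without essentially reproducing \cite{GaHag02}. Second, a small labeling slip: the weights $t^{-l(\zeta)}q^{1+a(\zeta)}$ and $t^{1+l(\zeta)}q^{-a(\zeta)}$ that you attribute to the tangent space at $I_{\mu}$ are, in this paper's conventions (Remarks \ref{R:torusaction} and \ref{R:eigenvalues}), the cotangent weights; the tangent weights are their reciprocals. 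This does not affect the formula --- the localization denominator is built from $\det(1-\tau)$ on the cotangent side, which is why item (3) shows exactly those factors --- but the distinction matters in this paper, which is careful about it when setting up the nested analogue.
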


 In \cite{Hai98}, Haiman showed that $\Hil{n}$ can be realized as a blow up of the $n^{th}$ symmetric product $S^n(\aff) :=(\aff)^n/\mathfrak{S}_n$ of the plane $\aff$ along a particular subscheme. Let $\shf{}(1)$ be the ample sheaf on $\Hil{n}$ arising from the $\Proj$ construction of the blow up. Then the suitable sheaf giving the fourth item in the $q,t$-Catalan theorem is $M:=\shf{}(1)\otimes \shf{Z_n}$, where $Z_n$ is the zero fiber in $\Hil{n}$ -the subscheme consisting of all points that are supported at the origin of $\aff$. 

Based on the computer experiments we conjecture that

\begin{conj}\label{T:nestedcharacter} Let $\eta:\Hil{n,n-1} \rightarrow \Hil{n}$ be the projection sending $(I_1,I_2)$ to $I_1$. 
Then for every $m \geq 1$, the Euler characteristic 
\begin{equation}
\chi_{\eta^*(\shf{}(m)\otimes \shf{Z_n})}(q,t)
\end{equation}
of the pull back of the sheaf $\shf{}(m)\otimes \shf{Z_n}$ on $\Hil{n,n-1}$ is a polynomial in $q$ and $t$ with nonnegative integer coefficients. Furthermore, letting $m=1$ we conjecture that 
\begin{equation}
\chi_{\eta^*(\shf{}(1)\otimes \shf{Z_n})}(q,t)=N_n(q,t),
\end{equation}
where the right hand side is given by \ref{E:nestedcatalan} above.
\end{conj}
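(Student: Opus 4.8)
The plan is to evaluate the equivariant Euler characteristic directly from the nested Atiyah--Bott--Lefschetz formula of Theorem \ref{T:nestedABL} (in its extension from vector bundles to $\TT^2$-equivariant coherent sheaves, applied through an equivariant locally free resolution of $\shf{}(m)\otimes\shf{Z_n}$), in complete analogy with the way item (3) of Theorem \ref{T:qtCat} is derived in the unnested case. The first, essentially mechanical, step is to compute the fibers. Because the sheaf in question is the pullback $\eta^{*}(\shf{}(m)\otimes\shf{Z_n})$, its localization (equivariant $K$-theory restriction) at a fixed point $(I_{\mu},I_{\nu})$ with $\nu=\mu\setminus\{\zeta\}$ equals the localization of $\shf{}(m)\otimes\shf{Z_n}$ at $\eta(I_{\mu},I_{\nu})=I_{\mu}\in\Hil{n}$, and is therefore independent of the removed corner cell $\zeta$. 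Reading this off from the numerator of item (3) of Theorem \ref{T:qtCat} (replacing the $\shf{}(1)$-weight $t^{n(\mu)}q^{n(\mu')}$ by its $m$-th power), it equals $t^{m\,n(\mu)}q^{m\,n(\mu')}(1-q)(1-t)\,\Pi_{\mu}(q,t)B_{\mu}(q,t)$. Substituting into \eqref{E:NAB}, the factor $(1-q)(1-t)$ cancels against the denominator and one is left with
\begin{equation*}
\chi_{\eta^{*}(\shf{}(m)\otimes\shf{Z_n})}(q,t)=\sum_{(\mu,\nu)\in\y{n,n-1}}\frac{t^{m\,n(\mu)}q^{m\,n(\mu')}\,\Pi_{\mu}(q,t)\,B_{\mu}(q,t)}{P_1(\mu,\nu)P_2(\mu,\nu)P_3(\mu,\nu)}.
\end{equation*}

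For the first assertion (polynomiality with nonnegative integer coefficients) I would argue geometrically rather than by forcing cancellation among these rational terms. Since $\shf{Z_n}$ is supported on the projective zero fiber $Z_n$, the sheaf $\eta^{*}(\shf{}(m)\otimes\shf{Z_n})$ is supported on the proper subscheme $\eta^{-1}(Z_n)$, so all of its cohomology is finite-dimensional and $\chi$ is a genuine bigraded virtual character. It then suffices to prove the vanishing $H^{i}(\Hil{n,n-1},\eta^{*}(\shf{}(m)\otimes\shf{Z_n}))=0$ for all $i>0$ and $m\geq 1$; granting this, $\chi$ is the bigraded Hilbert series of the honest bigraded vector space $H^{0}$, and nonnegativity of its coefficients is automatic. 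For the vanishing I would pass to $\Hil{n}$ through the projection formula $R\Gamma(\Hil{n,n-1},\eta^{*}N)\simeq R\Gamma(\Hil{n},N\otimes^{\mathbf{L}}R\eta_{*}\shf{\Hil{n,n-1}})$ with $N=\shf{}(m)\otimes\shf{Z_n}$, combine Haiman's vanishing theorems for the positive line bundle $\shf{}(m)$ twisted by the zero-fiber sheaf with a computation of the derived pushforward $R\eta_{*}\shf{\Hil{n,n-1}}$, and use properness of $\eta$ together with the fact that its fibers are projective. Showing that $R\eta_{*}\shf{\Hil{n,n-1}}$ is concentrated in degree zero, and compatible with Haiman's vanishing after twisting by $N$, is the technical heart of this half.

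The second assertion, the identity at $m=1$ with the combinatorial series $N_n(q,t)$ of \eqref{E:nestedcatalan}, is where I expect the main obstacle to lie. With the fibers in hand this becomes a purely combinatorial statement: the rational-function sum above (at $m=1$) must equal $\sum_{P\in\Dy{n+1}^{0}}q^{s_1(P)}t^{s_3(P)}(v_0+v_1t+\cdots+v_{b-1}t^{b-1})$. This is the nested analogue of the equality of items (2) and (3) in Theorem \ref{T:qtCat}, namely the $q,t$-Catalan theorem of Garsia and Haglund, which is already deep, so a naive term-by-term match is not to be expected. I would pursue two complementary routes. The representation-theoretic route uses the projection formula to rewrite the left-hand side as a $\nabla$-based Hall inner product on $\Hil{n}$ twisted by $R\eta_{*}\shf{\Hil{n,n-1}}$, and then extracts the statistics $s_1$, $s_3$ and the polynomial $\sum_i v_i t^{i}$ from the Haglund--Haiman--Loehr monomial expansion of $\mac{\mu}$. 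The combinatorial route seeks a common recursion in $n$ for both sides via the bounce decomposition into the sections $\mathbf{B}_i$: geometrically, the factor $\sum_i v_i t^{i}$ should record the choice of removable corner cell $\zeta$, that is the fiber of $\eta$, and matching this choice to the $01$-counts $v_i$ section by section is the crux. The conjectural $q\leftrightarrow t$ symmetry and the evaluation $N_n(1,1)=\frac{n}{2(n+1)}\binom{2(n+1)}{n+1}$ furnish strong consistency checks and suggest aiming for a symmetry-respecting proof rather than one tied to the manifestly asymmetric bounce statistic.
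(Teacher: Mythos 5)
The statement you are addressing is a \emph{conjecture}, and the paper contains no proof of it: what the paper actually establishes (in Section \ref{S:specialcase}, Proposition \ref{T:nestedCharacter}) is only the mechanical first step of your proposal, namely the computation of the fibers of $\eta^*(\shf{}(m)\otimes \shf{Z_n})$ at the torus fixed points via the Koszul resolution of $\shf{Z_n}$ and the substitution into the nested Atiyah--Bott formula, yielding the explicit rational expression \ref{E:nestedCatalan}. Your version of that step is correct and agrees with the paper (the fiber is pulled back from $I_{\mu}$, hence independent of the removed corner $\zeta$, and the factor $(1-t)(1-q)$ cancels). But everything beyond that point in your write-up is a plan, not a proof, and the two items you defer are precisely the entire content of the conjecture: (i) nonnegativity of the coefficients, and (ii) the identity $\chi_{\eta^*(\shf{}(1)\otimes \shf{Z_n})}(q,t)=N_n(q,t)$.

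Concretely, for (i) your proposed route through the projection formula and a computation of $R\eta_*\shf{\Hil{n,n-1}}$ faces a real obstruction that the sketch glosses over: $\eta$ is not flat --- its fiber over a reduced $I_1$ is $n$ points, while over punctual ideals the fibers jump in dimension --- so concentration of $R\eta_*\shf{\Hil{n,n-1}}$ in degree zero is not formal, and even granting it, Haiman's vanishing theorem for $\shf{}(m)\otimes\shf{Z_n}$ on $\Hil{n}$ does not by itself give vanishing for the derived tensor product $N\otimes^{\mathbf{L}}R\eta_*\shf{\Hil{n,n-1}}$; you would need new vanishing theorems on the nested Hilbert scheme (or on $F_n$, to which $\Hil{n,n-1}$ maps birationally), which is exactly the open geometric problem. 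For (ii), as you yourself note, the identity is a nested analogue of the Garsia--Haglund $q,t$-Catalan theorem; neither of your two suggested routes is carried out, and no recursion, symmetric-function identity, or bijection is produced. So the proposal should be regarded as a reasonable research outline consistent with the paper's framework, but it does not prove the statement --- nor could it be checked against a proof in the paper, since the paper offers none and supports the conjecture only by Maple experiments (the tables in Section \ref{S:tables}).
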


We shall say more about this conjecture in the section \ref{S:specialcase}.

Now, we would like to sketch how the exposition is organized. 
In section \ref{S:notation} we generated our notation necessary for the rest of the paper. In section \ref{S:cotangent}, we calculate an explicit basis for the cotangent spaces at the torus fixed points of the Hilbert scheme of points in the plane. The notation set in this section shall be used in the preceeding section \ref{S:entersnested} to calculate a basis for the cotangent spaces of the nested Hilbert scheme $\Hil{n,n-1}$ at the torus fixed points. In section \ref{S:zerofiber}, we summarize a few fundamental results concerning the zero fibers $Z_n$ and $Z_{n,n-1}$. We show in particular in that section that $Z_{n,n-1}$ is Cohen-Macaulay. In section \ref{S:characterformulas}, we first summarize the character formulas that Haiman derived, and then we prove the theorem 
\ref{T:nestedABL}. In the next section we look at a special case and present our conjecture \ref{T:nestedcharacter} above. In the final section, we shall talk about a Pierri rule for the Macdonald polynomials which is brought to our attention by Haiman.

\section{\textbf{Notation and the background}}\label{S:notation}

Let us begin with fixing some of our basic notation.
Letters $n,m,i,j,k,l,r,s$ and Greek letters with these letters as subscripts i.e., $\lambda_i, \mu_j$ are reserved for integers. A partition $\lambda$ of a positive integer $n$ is a decreasing sequence of numbers $(\lambda_1,...,\lambda_k)$ such that $\sum \lambda_i = n$. In this case, we shall write $\lambda \in \y{n}$. Often $\mu$ and $\nu$ shall also be used for denoting partitions of some natural numbers.

Without calling it after a country (and hoping not to offend anybody), we shall depict the shape (aka. the diagram) of a partition $\mu$ as in the figure \ref{F:armleg} below.
We shall denote a cell in the diagram of a partition by one of the letters $\zeta$ or $\alpha$, and by abuse of notation we shall write $\zeta \in \mu$ (or $\alpha\in \mu$). For each cell $\zeta$ in the Ferrers diagram of $\mu$, we define the leg $l(\zeta)$, the arm $a(\zeta)$, the co-leg $l'(\zeta)$, and the co-arm $a'(\zeta)$ of $\zeta\in \mu$ to be respectively the number of cells above, to the right of, below, and to the left of $\zeta$. 

We shall use the symbol $\row(\zeta)$ for the set of all cells of $\mu$ in the same row as $\zeta$, other than $\zeta$ itself. Similarly $\col(\zeta)$ shall be used for the set of all cells of $\mu$ in the same column as $\zeta$, other than $\zeta$ itself.

\begin{figure}[htp]
\begin{center}
\scalebox{.5}{\input{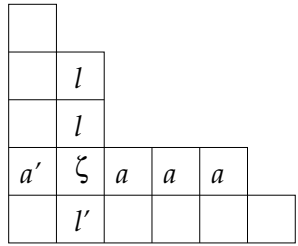}}
\caption{$a(\zeta)=3$, $l(\zeta)=2$, $a'(\zeta)=1$, $l'(\zeta)=1$}
\label{F:armleg}
\end{center}
\end{figure}

The coordinates of a cell $\zeta \in \mu$ are given by the coarm and the colength of $\zeta$, i.e.,
$(a'(\zeta),l'(\zeta))$. A word of caution is ready here. This is not the usual Cartesian coordinates on a point $P\in \N^2$. If the usual Cartesian coordinates of $P$ is $(a,b)$, then with our convention of the coordinates, it is equal to $(b,a)$. Following this convention of coordinates, given a partition $\mu \in \y{n}$, we define the associated set of monomials by $\basis{\mu}= \{x^hy^k: (h,k)\in \mu\}$. For example, $\mathfrak{B}_{(6,5,2,2,1)}$ contains the monomials
\begin{equation*}
\begin{array}{cccccc}
x^4 & & & & & \\
x^3 & x^3 y & & & & \\
x^2 & x^2y & & & & \\
x & xy & xy^2 & xy^3 & xy^4 &\\
1 & y & y^2 & y^3 & y^4 & y^5 
\end{array}
\end{equation*}

Let $\mathbf{x}=\{x_1,...,x_n\}$ and $\mathbf{y}=\{y_1,...,y_n\}$ be two sets of variables. By abuse of notation, we shall often use the symbol $\C[\mathbf{x},\mathbf{y}]$ to denote the polynomial ring $\C[x_1,y_1,...,x_n,y_n]$.

\subsubsection{Hilbert schemes}
A Hilbert scheme on a projective variety $X$ is a scheme that parametrizes all subschemes of $X$ with the prescribed ''Hilbert polynomial.'' In order not to digress too much We shall not explain what the Hilbert polynomial of a variety is. The existence of a Hilbert scheme has been proved by Grothendieck \cite{Grot61}. In the special case of Hilbert schemes of points, there are various elementary proofs of the existence (see \cite{GustavsenEtal}, \cite{Huibregtse06}). Using deformation techniques, Hartshorne \cite{Hartshorne66} showed that the Hilbert schemes are connected. For the special case of Hilbert schemes on surfaces, in \cite{Fog68} Fogarty showed that $\Hil{n}$ is smooth and connected.

It follows from basics of algebraic geometry that a closed point of $\Hil{n}$ can be identified with an ideal $I\subseteq \C[x,y]$ of length $n$. This means that the quotient $\C[x,y]/I$ is an $n$-dimensional vector space over $\C$. Therefore, set theoretically we can write
\begin{equation*}
\Hil{n}=\{I \subseteq \C[x,y]:\ dim_{\C}(\C[x,y]/I)=n\}.
\end{equation*}

Let $\mu\in \y{n}$ be a partition of $n$. Consider the set 
\begin{equation}
U_{\mu} := \{ I\in \Hil{n}:\ \basis{\mu}\ \text{is a vector space basis for}\ \C[x,y]/I\}.
\end{equation}

Note that this set is not empty because the monomial ideal 
\begin{equation*}
I_{\mu} := (x^hy^k:\ (h,k)\notin \basis{\mu})
\end{equation*}
is in $U_{\mu}$.

Notice that a minimal set of generators for the ideal $I_{\mu}$ is determined by the diagram of the partition $\mu$. 

\begin{example}\label{e:ideal3211} Let $\mu=(3,2,1,1)$, then $\basis{\mu}=\{1,x,x^2,x^3,y,xy,y^2 \}$, then the set $\{x^4,x^2y,xy^2,y^3\}$ constitutes a minimal generating set for $I_{\mu}=(x^hy^k:\ (h,k)\notin \basis{\mu})$.
\end{example}

\begin{figure}[htp]
\begin{center}
\scalebox{.5}{\input{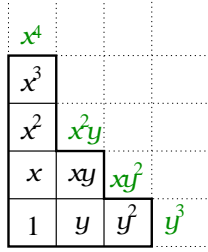}}
\caption{$I_{(3,2,1,1)}$ is generated by $x^4,x^2y,xy^2,y^3$}
\label{F:generators}
\end{center}
\end{figure}

In \cite{Huibregtse02}, Huibregtse has studied the sets $U_{\mu}$ in great detail.
We shall list a few basic properties of them. For the proofs, we refer the interested reader to \cite{Huibregtse02} and \cite{Hai98}. 

First of all, the sets $U_{\mu}$ are open affine subvarieties of $\Hil{n}$, and as $\mu$ runs over all partitions of $n$ they form a covering of the Hilbert scheme,  
\begin{equation*}
\Hil{n}= \bigcup_{\mu \in \y{n}} U_{\mu}.
\end{equation*} 

Now, let $I\in U_{\mu}$ be an arbitrary ideal, and let $x^hy^k$ be an arbitrary monomial from $\C[x,y]$. Then, modulo $I$, this monomial can be 
written as a linear combination (possibly with zero coefficients) 
\begin{equation}\label{E:Fundamentalequations}
x^hy^k = \sum_{(r,s)\in \mu} c^{hk}_{rs}(I) x^ry^s\hspace{.1in} (\text{mod}\ I),
\end{equation}
where $c^{hk}_{rs}(I)\in \C$. It turns out that $c^{hk}_{rs}$ are regular functions on $U_{\mu}$ and the algebra 
\begin{equation*}
\C[c^{hk}_{rs}:\ (h,k)\notin \mu]
\end{equation*}
is equal to the coordinate ring $\shf{\Hil{n}}(U_{\mu})$ of $U_{\mu} \subseteq \Hil{n}$.

\subsubsection{Some relevant schemes}
Due to its construction, a Hilbert scheme acquires a universal family which projects on it. In the case of Hilbert scheme of points in the plane (see proposition 2.9 in \cite{Hai98}), the universal family is the reduced, closed subvariety $F_n \subseteq \Hil{n}\times \aff$, which satisfying the following universal property;

\textit{Let $T$ be a scheme parametrizing a flat family $S$ of length $n$ subschemes of $\aff$. Then there exists a unique morphism $T\rightarrow \Hil{n}$ such that}
\begin{equation*} 
S = T \times_{F_n} \Hil{n}.
\end{equation*}

Following definitions, it can easily be shown that 
at least set theoretically $F_n$ is equal to 
\begin{equation}\label{E:universal}
F_n=\{(I,P)\in \Hil{n}\times \aff: P \in V(I)\},
\end{equation}
where $V(I)$ denotes the vanishing locus of the ideal $I\in \Hil{n}$ in $\aff$. Note here that since $I$ corresponds to a zero dimensional, length $n$ subscheme of $\aff$ counting with multiplicity $V(I)$ has exactly $n$ points.

Note also that the projection $\pi:F_n\longrightarrow \Hil{n}$ is a finite, 
flat morphism of rank $n$. Hence, pushdown $B_n=\pi_*(\shf{F_n})$ onto $\Hil{n}$ of the structure sheaf -sometimes called as ``a tautological sheaf'' on $\Hil{n}$- is a vector bundle of rank $n$. In fact more is true; $B_n$ is a bundle of $\shf{\Hil{n}}$-algebras with fibers $\C[x,y]/I$. 

Along with the universal family, $n$-fold symmetric power $S^n (\aff)$ plays a fundamental role in the study of the Hilbert scheme of points. This is the scheme of unordered $n$-tuples of points from $\aff$. We shall often denote a point of $S^n(\aff)$ by  $\llbracket p_1,...,p_{n}\rrbracket$, where $p_i\in \aff$. The symmetric power $S^n(\aff)$ can also be identified as the space of cycles of length $n$ in the plane
\begin{equation*}
S^n(\aff) = \{ \sum_{P\in \aff} a_P [P]:\ a_P\in \N,\ \sum a_P = n\},
\end{equation*}
where the notation $[P]$ stands for the divisor defined by the point $P\in aff$.  

The Chow morphism $\sigma:\Hil{n}\longrightarrow S^n(\aff)$ is 
defined by sending an ideal $I$ to its corresponding zero cycle $\sigma(I)=\sum_{P\in V(I)}m_P [P ]$, 
where $m_P$ is the multiplicity of $P$ in the scheme $\Spec (\C[x,y]/I)$. It is easy to see that the 
symmetric power $S^n(\aff)$ is the affine scheme $\Spec \C[\mathbf{x},\mathbf{y}]^{\mathfrak{S}_n}$ over the ring of invariants 
$\C[\mathbf{x},\mathbf{y}]^{\mathfrak{S}_n}:=\C[x_1,y_1,\dots,x_n,y_n]^{\mathfrak{S}_n}$ of the diagonal action of the symmetric group $\mathfrak{S}_n$. 
By a theorem of Weyl, the maximal ideal corresponding to the ``origin'' $n\cdot [0]$ is generated by the 
\textsl{polarized power sums} $\{p_{rs}:\ r+s>0\}$ where 
\begin{equation}\label{E:powersums}
p_{rs}=\sum_{i} x_i^ry_i^s \in \C[\mathbf{x},\mathbf{y}]^{\mathfrak{S}_n}.
\end{equation}
We note that this maximal ideal is nothing but $I_+$ (the ideal generated by the bi-symmetric polynomials without a constant term) mentioned in the introduction and that the elements of 
$I_+$ are global regular functions vanishing at the origin $n\cdot [0]$ of $S^n(\aff)$.

So called the \textit{zero fiber}, $Z_n\subseteq \Hil{n}$ is the preimage  $\sigma^{-1}( n\cdot [0])$ of the origin $n\cdot [0] \in S^n(\aff)$. Hence, pull back of any bivariate symmetric polynomial from $I_+ \subseteq \C[\mathbf{x}, \mathbf{y}]$ to $\Hil{n}$ under the Chow morphism, 
vanishes on $Z_n$, in particular $\sigma^* p_{r,s}$ has to vanish on $Z_n$.

\section{\textbf{Cotangent space vs. tangent space}}\label{S:cotangent}

In this section, we will review descriptions of the cotangent and the tangent spaces at the points of $\Hil{n}$ which are defined by monomial ideals. We first state a theorem of Haiman describing a basis for the cotangent space at a monomial ideal. Then we shall write a particular basis for the tangent space at a monomial ideal which we shall use while describing the tangent spaces of the nested Hilbert scheme $\Hil{n,n-1}$. The reason for why we do choose a particular basis is explained in the remark at the end of the section.

We start by the cotangent spaces. Recall that the coordinate ring of the open set $U_{\mu}$ is generated (as an algebra) by the functions $c^{rs}_{hk}$ where $(r,s)\notin \mu$. In \cite{Hai98}, Haiman calculates the following basis for the cotangent space at the monomial ideal $I_{\mu}$.

\begin{prop}\cite{Hai98}
The cotangent space $\mathfrak{m}/\mathfrak{m}^2$ of $\Hil{n}$ at the monomial ideal $I_{\mu}$ is spanned by the images of 
the following $2n$ coordinate functions from the maximal ideal $\mathfrak{m}$ at $I_{\mu}\in \Hil{n}$
\begin{equation}
u_{hk} = c^{h,g+1}_{f,k},\ d_{hk} = c^{f+1, k}_{h,g},
\end{equation}
where $(f,k)\in \mu$ is the top cell in the column of $(h,k)\in \mu$, and $(h,g)\in \mu$ is the right-most cell in the row of $(h,k)$. 
\end{prop}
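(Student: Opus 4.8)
The plan is to reduce the computation of $\mathfrak{m}/\mathfrak{m}^2$ to linear algebra on the matrices of multiplication and then to organize that linear algebra by the torus weight. First I would record two structural facts. Since $\Hil{n}$ is smooth of dimension $2n$ (Fogarty, \cite{Fog68}), the cotangent space $\mathfrak{m}/\mathfrak{m}^2$ at $I_\mu$ has dimension exactly $2n$; as there are precisely $2n$ of the functions $u_{hk},d_{hk}$ (one pair for each cell $\zeta=(h,k)\in\mu$), it suffices to prove that they span $\mathfrak{m}/\mathfrak{m}^2$, or equivalently that they are linearly independent there. Second, because $\TT^2$ acts on $U_{\mu}$ fixing $I_\mu$, the space $\mathfrak{m}/\mathfrak{m}^2$ is $\TT^2$-graded, and the coordinate function $c^{hk}_{rs}$ is homogeneous of weight $(h-r,\,k-s)$. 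A direct check then shows that $u_{hk}=c^{h,g+1}_{f,k}$ has weight $(-l(\zeta),\,a(\zeta)+1)$ and $d_{hk}=c^{f+1,k}_{h,g}$ has weight $(l(\zeta)+1,\,-a(\zeta))$, matching the $2n$ arm--leg weights occurring in the denominator of Theorem~\ref{T:qtCat}(3).

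The first real step is to show that $\mathfrak{m}/\mathfrak{m}^2$ is already spanned by the ``one-step'' reduction coefficients. Multiplication by $x$ and by $y$ on $\C[x,y]/I$, written in the basis $\basis{\mu}$, give commuting endomorphisms $X$ and $Y$ whose only non-constant matrix entries are exactly the $c^{f+1,k}_{rs}$ for which $(f,k)\in\mu$ tops its column (for $X$) and the $c^{h,g+1}_{rs}$ for which $(h,g)\in\mu$ ends its row (for $Y$); these include the functions $u$ and $d$ together with the remaining coefficients of the same reductions. Since $x^hy^k=X^hY^k\cdot 1$ in $\C[x,y]/I$, every generator $c^{hk}_{rs}$ of $\mathfrak{m}$ is a polynomial in these entries, and linearizing $X=X_0+\delta X$, $Y=Y_0+\delta Y$ at $I_\mu$ shows that modulo $\mathfrak{m}^2$ each $c^{hk}_{rs}$ is a $\C$-linear combination of the entries of $\delta X$ and $\delta Y$. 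Hence $\mathfrak{m}/\mathfrak{m}^2$ is spanned by these one-step coefficients.

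Next I would impose the defining relations. By Haiman's description of the chart, the relations among the $c^{hk}_{rs}$ are generated by the commutation $XY=YX$ (the consistency of reducing a monomial through $x\cdot$ versus $y\cdot$), whose linearization is $[\delta X,\,Y_0]+[X_0,\,\delta Y]=0$; these are therefore the only linear relations on the one-step coefficients in $\mathfrak{m}/\mathfrak{m}^2$. Because $X_0,Y_0$ are the nilpotent shift operators attached to the diagram of $\mu$, this relation is completely explicit, and I would solve it one $\TT^2$-weight at a time: in a fixed weight the commutator equation becomes a small explicit linear system determined by the shifts along the relevant row and column of $\mu$, whose solution space is spanned by the corresponding $u$'s and $d$'s. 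Concretely, a single reduction of $x^{f+1}y^k$ contributes the coefficients $d_{hk}$ sitting in the end-of-row slots $(h,g)$, a single reduction of $x^{h}y^{g+1}$ contributes the $u_{hk}$ sitting in the top-of-column slots $(f,k)$, and the commutativity relations eliminate all the intermediate entries. Running over all weights recovers all $2n$ functions and shows they span; the dimension count then forces them to be a basis.

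The main obstacle is exactly this last weight-by-weight solution of the linearized commutator relations: one must verify that, among the many one-step coefficients of a given weight, the equations eliminate everything except the single top-of-column (resp. end-of-row) entry $u_{hk}$ (resp. $d_{hk}$), and that no further relation collapses the surviving functions. The shift structure of $X_0,Y_0$ is what makes this go through, rendering the relevant systems triangular with respect to the natural order along rows and columns. A convenient cross-check on the independence is to construct the dual ``arrow'' basis of the tangent space $\Hom_{\C[x,y]}(I_\mu,\C[x,y]/I_\mu)$ explicitly and verify that its pairing matrix against $\{u_{hk},d_{hk}\}$ is triangular, hence invertible.
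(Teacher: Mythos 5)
First, a note on the comparison itself: this paper does not prove the proposition at all --- it is quoted from \cite{Hai98}, and Haiman's argument there works directly with the coordinates $c^{hk}_{rs}$, establishing the ``sliding'' congruences $c^{h+1,k}_{r+1,s}\equiv c^{hk}_{rs}$ and $c^{h,k+1}_{r,s+1}\equiv c^{hk}_{rs}$ modulo $\mathfrak{m}^2$ (with a coordinate declared zero when its arrow slides out of the diagram), which push every coordinate to some $u_{hk}$, some $d_{hk}$, or to $0$. Your route is genuinely different in its framing: you identify the chart $U_{\mu}$ with pairs of commuting matrices $(X,Y)$ of constrained shape, note that $x^hy^k=X^hY^k\cdot 1$ makes every $c^{hk}_{rs}$ a polynomial in the one-step entries, and then reduce modulo the linearized commutator $[\delta X,Y_0]+[X_0,\delta Y]=0$. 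This is sound, your weight computations for $u_{hk}$ and $d_{hk}$ are correct, and the approach buys something real: it explains \emph{why} the sliding relations hold (they are precisely the entries of the linearized commutator, i.e., the linearization of commutativity of the multiplication operators), and it identifies first-order deformations of $I_{\mu}$ with solutions of the linearized equation via $\C[\epsilon]$-points. Your closing ``cross-check'' --- pairing against the explicit arrow homomorphisms in $\Hom_{\C[x,y]}(I_{\mu},\C[x,y]/I_{\mu})$ and using a dimension count --- is in fact exactly how the present paper proves the dual, tangent-space statement in Section~\ref{S:cotangent}.

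Two caveats. First, your assertion that the relations of the chart are \emph{generated} by $XY=YX$, ``therefore the only linear relations,'' is both unjustified as stated (it is a scheme-theoretic claim requiring its own argument) and unnecessary. For spanning you only need that the commutator relations \emph{hold}, which is clear since the multiplication operators commute for every $I\in U_{\mu}$, so the entries of $[X,Y]$ vanish identically on $U_{\mu}$; extra relations could only help spanning, and once you have a spanning set of $2n$ elements, smoothness of $\Hil{n}$ \cite{Fog68} forces it to be a basis, so a posteriori no further relations exist. You should restructure the logic this way rather than lean on the ``generated by'' claim. Second, the weight-by-weight elimination that you defer as ``the main obstacle'' is not a side verification --- it is the entire content of the proof, and when carried out it reproduces Haiman's sliding computation in matrix clothing (e.g.\ for $\mu=(2)$ the equations read $c^{11}_{00}\equiv 0$ and $c^{11}_{01}\equiv c^{10}_{00}$, which is exactly one slide). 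So your reformulation clarifies the origin of the relations but does not shortcut the combinatorial work; as written, the proposal is a correct strategy whose core step remains a sketch.
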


\begin{rem}\label{R:torusaction} The torus $\TT^2$ acts algebraically on $\C[x,y]$ by $(t,q)\cdot x = tx$ and $(t,q)\cdot y = qy$. This action passes onto $\Hil{n}$. More precisely, on $\Hil{n}$ it is defined by 
\begin{equation*}
(t,q)\cdot I = \{p(t^{-1}x,q^{-1}y):\ p(x,y)\in I\},\ \text{for}\ I\in \Hil{n}.
\end{equation*}
Notice that an ideal $I\in \Hil{n}$ is fixed under this action if and only if $I$ is doubly homogeneous and this is equivalent to say that $I$ is spanned by monomials. Therefore, the torus fixed points of the Hilbert scheme $\Hil{n}$ consists of the ideals of the form
\begin{equation*}
I_{\mu} = (x^hy^k:\ (h,k)\notin \basis{\mu}),\ \text{for}\ \mu\in \y{n}.
\end{equation*}
At a torus fixed point the equations \ref{E:Fundamentalequations} has to stay unchanged, hence we must have 
\begin{equation*}
(t,q)\cdot c^{rs}_{hk}=t^{h-r}q^{k-s}c^{rs}_{hk}.
\end{equation*}
Therefore, for a cell $\zeta=(h,k)$ in the diagram of the partition $\mu$ the torus character of the vector $d_{hk}$ is $t^{1+l(\zeta)}q^{-a(\zeta)}$, while the character of $u_{hk}$ is $t^{-l(\zeta)}q^{1+a(\zeta)}$.
\end{rem}

Let $X$ be an arbitrary Hilbert scheme over an affine scheme $\Spec A$. Grothendieck \cite{Grot61} showed that the tangent space at a point $z\in X$ with the corresponding ideal $I_z\subseteq A$, 
is canonically isomorphic to
\begin{equation}
\Hom_{A}(I_z, A/I_z),
\end{equation}
considered as a vector space over the residue field $\shf{X,z}/\mathfrak{m}_z$ of the point $z$. In our case, 
$A=\C[x,y]$, and $P$ is the constant polynomial $n$. 
We denote this tangent space by
\begin{equation*}
T_{\Hil{n},I} := \Hom_{\C[x,y]}(I, \C[x,y]/I).
\end{equation*}

Now, let $I_{\mu}$ be a monomial ideal and let $\gamma\in T_{\Hil{n},I_{\mu}}$ be a homomorphism. Recall that the 
generators of $I_{\mu}$ are monomials that sit on the outer corners of the partition $\mu$. We are going to 
call these monomials as \textit{canonical generators} and denote the set of all canonical generators by $\mathfrak{C}_{\mu}$. For example, for the ideal in the example \ref{e:ideal3211} the set $\mathfrak{C}_{(3,2,2,1)}$ of canonical generators is $\{x^4,x^2y,xy^2,y^3\}$ (see also figure \ref{F:generators}).

Recall that $\basis{\mu}$ is a basis for $\C[x,y]/I_{\mu}$. Therefore, $\gamma$ is uniquely determined by the images of the canonical generators in the span of $\basis{\mu} \mod I$
 Notice however that an assignment $\gamma$ cannot be arbitrary since $\gamma$ has to be an $\C[x,y]$-module homomorphism. 

Indeed, if $\gamma(x^ry^s)=g \mod I_{\mu}$ for a canonical generator $x^ry^s\in \mathfrak{C}_{\mu}$, then we have to have 
\begin{equation}\label{E:hom}
\gamma(x^{r+i}y^{s+j})=x^iy^j\cdot g \mod I_{\mu}
\end{equation}
for any $x^iy^j\in \C[x,y]$.

It will soon be apparent to the reader that what is happening in (\ref{E:hom}) is equivalent to the ``sliding rules'' described by Haiman in \cite{Hai98} of the arrows associated with the generators of the cotangent space.  In the rest of this section, we are going to write a basis for the tangent space to $\Hil{n}$ at the monomial ideal $I_{\mu}$. It will consist of $2n$ homomorphisms (two for each square of the partition $\mu$ of $n$) The first $n$ of these homomorphism will correspond to ``downward arrows'' of \cite{Hai98}, and the latter $n$ homomorphisms will correspond to ``upward arrows.''

We shall start with defining \textit{downward homomorphisms} $\{d^*_{hk}\}$ first. Let $(h,k)\in \mu$ be a cell and let $(f,k)\in \mu$ be the top cell in the column of $(h,k)$. Similarly, let $(h,g)\in \mu$ be the right-most cell in the row of $(h,k)$. 

To ease the understanding, we shall construct downward homomorphism $d^*_{hk}: I_{\mu} \rightarrow \C[x,y]/I_{\mu}$ in several steps. First step in its construction is rather easy to state. We set  
\begin{equation}\label{E:downarrow1}
d_{hk}^*(x^{f+1}y^k) = x^hy^g \mod I_{\mu}.
\end{equation}
It is helpful to visualize (\ref{E:downarrow1}) as if it is an arrow starting at $(f+1,k)$ and ending at $(h,g)$ as in the figure \ref{fig:darrow}. Let us denote this downward arrow by $\mathfrak{d}_{hk}$.

\begin{figure}[h]
\begin{center}
\scalebox{.5}{\input{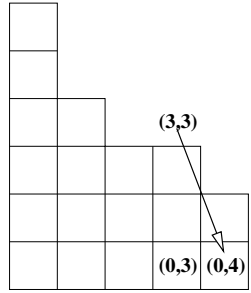}}
\caption{The downward arrow $\mathfrak{d}_{03}$}
\label{F:darrow}
\end{center}
\end{figure}

If $x^{f+1}y^k$ is not a canonical generator, then we would like to ``slide''
$\mathfrak{d}_{hk}$ without changing the relative positions of the head and the tail, as follows. Suppose that for some $l>0$, the monomial $x^{f+1}y^{k-l}$ is a canonical generator in the row of $x^{f+1}y^k$. Then, we define  
$d_{hk}^*(x^{f+1}y^{k-l})=x^hy^{g-l} \mod I_{\mu}$. For the arrow in the figure \ref{fig:darrow}, $x^3y^2$ is a canonical generator for $I_{(5,5,4,2,1,1)}$ and we define $d_{03}^*(x^3y^2)=y^3$.

For the rest of the construction, without loss of generality we shall assume that $x^{f+1}y^k$ is canonical generator.

Now, suppose that $x^ay^b\in \canonical{\mu}$ is another canonical generator with $a>{f+1}$. Note that this implies that $b < f$. Suppose also that without changing the relative positions of the head and the tail we can slide the arrow $\mathfrak{d}_{hk}$ and bring its tail to $x^ay^b$ in such a way that while moving the head always stays inside the diagram of $\mu$ and the tail always stays outside. In that case we shall define  
\begin{equation}
d_{hk}^*(x^ay^b) = x^{a-f-1+h}y^{b+g-k} \mod I_{\mu}.
\end{equation}
If this sliding procedure cannot be done without head of the arrow staying inside the diagram of $\mu$, then we set 
\begin{equation}
d_{hk}^*(x^ay^b) = 0 \mod I_{\mu}.
\end{equation}
Finally, for any canonical generator $x^ay^b \in \canonical{\mu}$ with $b> f+1$, we want
\begin{equation}
d_{hk} ^*(x^ay^b) = 0 \mod I_{\mu}.
\end{equation}
The reason for why we want the last equality is that the arrow $\mathfrak{d}_{hk}$ cannot be slided in the direction that the exponents of $y$ increases. Therefore, in order for not to make homomorphism $d_{hk}^*$ more complicated than it should be, we send those canonical generators to zero. This finishes the desription of the map $d_{hk}^*$.

\begin{example} Let $\mu=(5,5,4,2,1,1)$ be the partition whose diagram is as in the figure \ref{fig:darrow}. Then the set of canonical generators are
\begin{equation*}
\canonical{(5,5,4,2,1)}= \{x^6,x^4y,x^3y^2,x^2y^4,y^5 \}.
\end{equation*}
Now, according to construction above, we must have 
$d_{03}^*(x^6)=0,\ d_{03}^*(x^4y)=xy^2,\ d_{03}^*(x^3y^2)=y^3,\ d_{03}^*(x^2y^4)=0$, and $d_{03}^*(y^5)= 0$ modulo $I_{\mu}$. 
\end{example}

Similar to $d_{hk}^*$,  we shall define $u^*_{hk}: I_{\mu} \rightarrow \C[x,y]/I_{\mu}$ in steps. Once again, let $(h,k)$ be a cell in the diagram of the partition $\mu$. Let $(h,g)$ be the right-most cell in the row of $(h,k)$ in the diagram of $\mu$, and let $(f,k)$ be the top cell in the column of $(h,k)$. First we define
\begin{equation}
u_{hk}^*(x^hy^{g+1}) = x^fy^k \mod I_{\mu}.
\end{equation}
We would like to visualize this assignment as an \textit{upward arrow} as in figure \ref{fig:upward}. Let us denote it $\mathfrak{u}_{hk}$.

\begin{figure}[h]
\begin{center}
\scalebox{.5}{\input{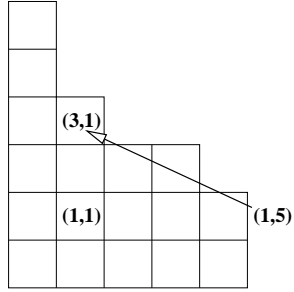}}
\caption{The upward arrow $\mathfrak{u}_{11}$}
\label{fig:upward}
\end{center}
\end{figure}

If $x^hy^{g+1}$ is not a is not a canonical generator, then we would like to slide $\mathfrak{u}_{hk}$ downwards without changing the relative positions of the head and the tail. Suppose that for some $l>0$, the monomial $x^{h-l} y^{g+1}$ is a canonical generator in the column of $x^h y^{g+1}$. Then, we define $u_{hk}^*(x^{h-l} y^{g+1})=x^{f-l} y^k \mod I_{\mu}$. For the arrow in the figure \ref{fig:upward}, $y^5$ is a canonical generator for $I_{(5,5,4,2,1,1)}$ and we define $u_{11}^*(y^5)=x^2y$. 

For the rest of the construction, without loss of generality we shall assume that $x^hy^{g+1}$ is canonical generator.

Suppose that $x^ay^b\in \canonical{\mu}$ is another canonical generator with $b>{g+1}$. Note that this implies that $a < h$. Suppose also that without changing the relative positions of the head and the tail we can slide the arrow $\mathfrak{u}_{hk}$ and bring its tail to $x^ay^b$ in such a way that while moving the head always stays inside the diagram of $\mu$ and the tail always stays outside. In that case we shall define  
\begin{equation}
u_{hk}^*(x^ay^b) = x^{f-h+a}y^{k+b-g+1} \mod I_{\mu}.
\end{equation}
If this sliding procedure cannot be done without head of the arrow staying inside the diagram of $\mu$, then we set 
\begin{equation}
u_{hk}^*(x^ay^b) = 0 \mod I_{\mu}.
\end{equation}
Finally, for any canonical generator $x^ay^b \in \canonical{\mu}$ with $a> h$, we want
\begin{equation}
u_{hk} ^*(x^ay^b) = 0 \mod I_{\mu}.
\end{equation}
This finishes the description of the map $u_{hk}^*$. 

\begin{example} Let $\mu=(5,5,4,2,1,1)$ be the partition whose diagram is as in the figure \ref{fig:darrow}. Then the set of canonical generators are
\begin{equation*}
\canonical{(5,5,4,2,1)}= \{x^6,x^4y,x^3y^2,x^2y^4,y^5 \}.
\end{equation*}
Now, according to construction above, we must have 
$u_{11}^*(x^6)=0,\ u_{11}^*(x^4y)=0,\ u_{11}^*(x^3y^2)=0,\ u_{11}^*(x^2y^4)=0$, and $u_{11}^*(y^5)= x^2y$ modulo $I_{\mu}$. 
\end{example}

\begin{prop}
Tangent space $T_{\Hil{n},I_{\mu}} = \Hom_{\C[x,y]}(I_{\mu}, \C[x,y]/I_{\mu})$ of $\Hil{n}$ at a monomial ideal 
$I_{\mu}$ is spanned as a vector space by the set $\{d^*_{hk},u^*_{hk}:\ (h,k)\in \mu\}$ of $2n$ homomorphisms.
\end{prop}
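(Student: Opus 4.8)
The plan is to combine three inputs that are already available: Grothendieck's identification $T_{\Hil{n},I_\mu}=\Hom_{\C[x,y]}(I_\mu,\C[x,y]/I_\mu)$, Fogarty's smoothness of $\Hil{n}$ (so $\dim_\C T_{\Hil{n},I_\mu}=2n$), and Haiman's preceding proposition that the cotangent space $\mathfrak{m}/\mathfrak{m}^2$ is $2n$-dimensional with weight basis $\{d_{hk},u_{hk}\}$. Since the proposed spanning set has exactly $2n=\dim_\C T_{\Hil{n},I_\mu}$ elements, it suffices to prove two things: (a) each $d^*_{hk}$ and $u^*_{hk}$ really is a $\C[x,y]$-module homomorphism, and (b) the $2n$ of them are linearly independent. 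Spanning then follows from the dimension count, so I would organize the whole argument around (a) and (b).

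Step (a), well-definedness, is where the real work lies and is the main obstacle. Each map is prescribed on the canonical generators $\canonical{\mu}$ by the sliding rule and then extended $\C[x,y]$-linearly; such an extension is a module homomorphism precisely when the prescribed values annihilate the first syzygies of $I_\mu$. For a monomial ideal these are generated by the $S$-pair relations of pairs of (adjacent) canonical generators $m_i,m_j$, namely $(\mathrm{lcm}(m_i,m_j)/m_i)\,m_i=(\mathrm{lcm}(m_i,m_j)/m_j)\,m_j$. So I would verify, for each such pair, that
\begin{equation*}
\frac{\mathrm{lcm}(m_i,m_j)}{m_i}\,d^*_{hk}(m_i)\equiv\frac{\mathrm{lcm}(m_i,m_j)}{m_j}\,d^*_{hk}(m_j)\pmod{I_\mu},
\end{equation*}
and likewise for $u^*_{hk}$. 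This is exactly what the arrow picture is designed to encode: sliding $\mathfrak{d}_{hk}$ (resp. $\mathfrak{u}_{hk}$) while keeping head and tail in fixed relative position, and sending a generator to $0$ once the head would leave the diagram of $\mu$, guarantees that multiplying either side of an $S$-pair by the complementary monomial lands on the same cell of $\basis{\mu}$ (or in $I_\mu$, hence $0$) in $\C[x,y]/I_\mu$. I would split the verification along the two cases already separated in the construction (tail sliding toward larger $x$-exponent versus the out-of-range generators sent to $0$), checking that the head either reaches a cell of $\basis{\mu}$ independent of the route or exits $\mu$ consistently.

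For step (b) I would use the torus action to make independence nearly automatic. A homomorphism sending $x^ry^s$ to a monomial $x^ay^b$ is a $\TT^2$-weight vector of weight $t^{a-r}q^{b-s}$; feeding in the defining values shows that $d^*_{hk}$ has weight $t^{-(1+l(\zeta))}q^{a(\zeta)}$ and $u^*_{hk}$ has weight $t^{l(\zeta)}q^{-(1+a(\zeta))}$ for $\zeta=(h,k)$. These are exactly the inverses of the cotangent weights recorded in Remark \ref{R:torusaction}, and since $T_{\Hil{n},I_\mu}$ is the dual $\TT^2$-module of $\mathfrak{m}/\mathfrak{m}^2$, the multiset of weights of our $2n$ homomorphisms equals the full multiset of tangent weights. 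Hence it is enough to prove independence inside each weight space, which I would do with the natural pairing $T_{\Hil{n},I_\mu}\times \mathfrak{m}/\mathfrak{m}^2\to\C$ sending $(\gamma,c^{hk}_{rs})$ to the coefficient of $x^ry^s$ in $\gamma(x^hy^k)$. A direct evaluation gives $\langle d^*_{hk},d_{hk}\rangle=\langle u^*_{hk},u_{hk}\rangle=1$, while $\TT^2$-invariance forces $\langle d^*,u\rangle=0$ automatically (those two weights can never be inverse). Thus the pairing matrix against the cotangent basis is block-diagonal by weight with $1$'s on the diagonal; the only residual point is nonsingularity within a block, i.e. among distinct cells sharing the same arm and leg, which I would settle by a leading-monomial refinement — such cells lie in different columns, so their defining generators $x^{f+1}y^k$ (resp. $x^hy^{g+1}$) are distinct, yielding a triangular pattern. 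Nondegeneracy of the pairing then gives linear independence, and with $\dim_\C T_{\Hil{n},I_\mu}=2n$ the $2n$ homomorphisms form a basis, hence span $T_{\Hil{n},I_\mu}$.
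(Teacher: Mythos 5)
Your proposal is correct, and its skeleton is exactly the paper's: exhibit the $2n$ homomorphisms, prove they are linearly independent, and get spanning for free from the equality of tangent and cotangent dimensions ($\dim T_{\Hil{n},I_{\mu}}=2n$ by smoothness together with Haiman's cotangent basis). The divergence is in how the two sub-steps are executed. For independence, the paper's argument is a single sentence: each $d^*_{hk}$ (resp.\ $u^*_{hk}$) sends its defining generator $x^{f+1}y^k$ (resp.\ $x^hy^{g+1}$) to a standard monomial of $\basis{\mu}$, and since these images are distinct basis elements modulo $I_{\mu}$, a vanishing linear combination has vanishing coefficients. Your route through the $\TT^2$-weight decomposition, the pairing with the cotangent generators $\{d_{hk},u_{hk}\}$, and triangularity by column reaches the same conclusion with more machinery, but it is also more airtight: it deals explicitly with the fact that several of the homomorphisms can be nonzero on the same canonical generator, a point the paper's one-liner silently elides. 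Your step (a) — checking via the S-pair syzygies of the monomial ideal that the sliding prescription really extends to a $\C[x,y]$-module homomorphism — has no counterpart in the paper's proof at all; the paper treats well-definedness as settled by the construction preceding the proposition. You are right that this is where the substantive work lies, and your reduction (in two variables the syzygies of $I_{\mu}$ are generated by the S-pairs of adjacent canonical generators, so compatibility need only be checked there) is the correct way to discharge it; in that respect your write-up fills in a verification the paper leaves implicit rather than merely reproducing its proof.
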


\begin{proof}
Linear independence is clear from the fact that the images of the monomials $x^{f+1}y^k$ and $x^hy^{g+1}$ 
are standard monomials which form a basis modulo $I_{\mu}$. Since dimension of a tangent space is the same 
as the dimension of the cotangent space, we are done.
\end{proof}

As mentioned in the remark \ref{R:torusaction}, one can calculate the eigenvalues of the torus action on the cotangent space. On the tangent space we can do it as follows. Let $\zeta=(h,k)$ be a cell in the diagram of $\mu$. Since $d_{hk}^*(x^{f+1}y^k) = x^hy^g \mod I_{\mu}$, we must have 
\begin{equation}
(t,q)\cdot d_{hk}^* = t^{h-(f+1)}q^{g-k} d_{hk}^* = t^{-1-l(\zeta)}q^{a(\zeta)}d_{hk}^*.
\end{equation}
Similarly, since $u_{hk}^*(x^hy^{g+1}) = x^fy^k \mod I_{\mu}$, we must have 
\begin{equation}
(t,q)\cdot u_{hk}^*= t^{f-h}q^{k-(g+1)} u_{hk}^* = t^{l(\zeta)}q^{-1-a(\zeta)}u_{hk}^*. 
\end{equation}

Notice that these eigenvalues are exactly the reciprocals of the eigenvalues that Haiman has calculated. At this point one might wonder if the basis 
$\{d_{hk}, u_{hk}\}_{\mu}$ of the cotangent space is the dual basis to the basis $\{d^*_{hk},u^*_{hk} \}_{\mu}$ of the tangent space. We shall not worry about this problem. Instead we define a dual basis for the cotangent space.

\begin{defn}
Let $T^*_{\Hil{n},I_{\mu}}$ be the cotangent space of $\Hil{n}$ at the torus fixed point $I_{\mu}$. Recall that this is just the dual vector space of the tangent space $T_{\Hil{n},I_{\mu}}$. We shall denote the elements of the dual basis to $\{d_{hk}^*, u_{hk}^* \}_{\mu}$ by $\{D_{hk},U_{hk}\}_{(h,k)\in \mu}$, where
\begin{equation}
D_{hk}(d_{h'k'}) = \delta_{hk,h'k'},\ \text{and}\ U_{hk}(u_{h'k'})=
\delta_{hk,h'k'},
\end{equation}
and $\delta_{hk,h'k'}$ is the Kronecker delta. 
\end{defn}

\begin{rem}\label{R:eigenvalues} Recall the following basic but easily-forgettable fact that an invertible linear transformation on a vector space acts on the dual by its inverse,
i.e., if $T:V\rightarrow V$ is a transformation then $T$ acts on $f\in V^*$ by $(Tf)v = f(T^{-1}(v))$. 
Therefore, for a cell $\zeta=(h,k)\in \mu$, we see that the torus eigenvalue of $D_{hk}$ is $t^{1+l(\zeta)}q^{a(\zeta)}$, while the torus eigenvalue of $U_{hk}$ is $t^{l(\zeta)}q^{1+a(\zeta)}$.   
\end{rem}

\section{\textbf{Enters the smooth nested Hilbert scheme of points}}\label{S:entersnested}

In this section we shall concentrate on a reduced closed subscheme $\Hil{n,n-1} \subseteq \Hil{n} \times \Hil{n-1}$ defined by incidence relations
\begin{equation*}
\Hil{n,n-1} := \{ (I_1,I_2)\in \Hil{n}\times \Hil{n,n-1}:\ I_1\subseteq I_2\}.
\end{equation*}
This scheme is called as the smooth nested Hilbert scheme of points in the plane. The torus action on $\Hil{n}$ naturally extends to $\Hil{n,n-1}$. At the end of the section we shall describe a basis for the tangent spaces at the torus fixed points for $\Hil{n,n-1}$. We should mention here that Cheah \cite{Cheah98} computes the dimension of a tangent space at a torus fixed point to be $2n$. However, she does not write a basis in the explicit fashion as we are going to write here. We shall start with some basic facts about the nested Hilbert schemes.

Given a non increasing sequence $n_1\geq \cdots \geq n_r$ of positive integers one can consider the nested Hilbert scheme of points $\Hil{n,n_1,...,n_r}$ defined by 
\begin{equation*}
\Hil{n_1,...,n_r} := \{(I_1,...,I_r)\in \Hil{n_1}\times \cdots \times \Hil{n_r}:\ I_1\subseteq \cdots \subseteq I_r\}.
\end{equation*}

Observe that the dimension of the generic locus of $\Hil{n_1,...,n_r}$ (where the smallest ideal $I_1$ corresponds to $n_1$ distinct points) is $2n_1$, because on the generic locus there is a finite to one mapping onto $\mathbb{A}^{n_1}$. Therefore, $\Hil{n_1,n_2,...,n_r}$ is of dimension $2n_1$. It is an interesting fact due to Cheah \cite{Cheah98} that nested Hilbert schemes of points in the plane, except $\Hil{n,n-1}$ and few other small cases, are singular. From the combinatorial point of view, it might be interesting to determine what kind of singularities that $\Hil{n_1,n_2,...,n_r}$ might have. On that note, the simplest example of a nested Hilbert scheme of points (other than $\Hil{n,n-1}$) whose singularities are easy to understand is $\Hil{n,1}$.

\begin{prop}
The nested Hilbert scheme $\Hil{n,1}$ is Cohen Macaulay of dimension $2n$.
\end{prop}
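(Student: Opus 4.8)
The plan is to identify $\Hil{n,1}$ with the universal family $F_n$ of \eqref{E:universal} and then to read off the Cohen--Macaulay property from the finiteness and flatness of the projection $\pi\colon F_n\to\Hil{n}$. First I would set up the identification. A colength-one ideal $I_2\subseteq\C[x,y]$ is precisely a maximal ideal $\mathfrak{m}_P$ for a unique point $P\in\aff$, so $\Hil{1}$ is canonically isomorphic to $\aff$. Under this isomorphism the incidence condition $I_1\subseteq I_2=\mathfrak{m}_P$ becomes exactly $P\in V(I_1)$. Comparing with \eqref{E:universal}, the reduced scheme $\Hil{n,1}$ is therefore isomorphic to $F_n\subseteq\Hil{n}\times\aff$, with $\pi$ corresponding to projection onto the first factor.

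Next I would invoke the structural facts already recorded in Section \ref{S:notation}. The morphism $\pi\colon F_n\to\Hil{n}$ is finite and flat of rank $n$; equivalently $\pi_*\shf{F_n}$ is a locally free $\shf{\Hil{n}}$-module of rank $n$. By Fogarty \cite{Fog68} the scheme $\Hil{n}$ is smooth of dimension $2n$, hence regular and in particular Cohen--Macaulay. Moreover every fiber of $\pi$ is a zero-dimensional (finite) scheme, so its local rings are Artinian and thus automatically Cohen--Macaulay.

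Finally I would apply the standard descent of the Cohen--Macaulay property along flat morphisms: for a flat morphism locally of finite type, the total space is Cohen--Macaulay at a point exactly when the base is Cohen--Macaulay at its image and the fiber is Cohen--Macaulay at that point (see, e.g., Matsumura, Commutative Ring Theory, Theorem 23.3). Since the base $\Hil{n}$ is Cohen--Macaulay and every fiber of $\pi$ is Cohen--Macaulay, it follows that $F_n\cong\Hil{n,1}$ is Cohen--Macaulay. The dimension statement is immediate because $\pi$ is finite and surjective, so $\dim\Hil{n,1}=\dim F_n=\dim\Hil{n}=2n$, consistent with the earlier remark that $\Hil{n_1,\dots,n_r}$ has dimension $2n_1$.

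The only genuinely non-formal point is the scheme-theoretic identification $\Hil{n,1}\cong F_n$ --- in particular checking that both are taken with their reduced structure so that the isomorphism is not merely set-theoretic; once this is secured, Cohen--Macaulayness is a formal consequence of finite flatness over a regular base with finite fibers. As an alternative to citing the flat-fiber criterion, I could argue locally: over an affine chart $\Spec R\subseteq\Hil{n}$ with $R$ regular, $\pi_*\shf{F_n}$ restricts to a finite free $R$-algebra $S$, which is then a maximal Cohen--Macaulay $R$-module and hence Cohen--Macaulay as a ring.
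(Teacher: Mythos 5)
Your proof is correct and follows essentially the same route as the paper: identify $\Hil{n,1}$ with the universal family $F_n$ via $(I_1,I_2)\mapsto (I_1,p)$, then deduce Cohen--Macaulayness from the fact that $F_n$ is finite flat over the smooth scheme $\Hil{n}$. The paper states this last implication without justification, whereas you supply the standard details (the flat-descent criterion for the Cohen--Macaulay property, or equivalently the local finite-free-module argument), which is a welcome elaboration but not a different argument.
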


\begin{proof}
Let $(I_1,I_2)$ be a point of $\Hil{n,1}$. Then $V(I_2)\subseteq V(I_1)$ and $I_2$ is of colength 1. In other words, 
$I_2$ is a maximal ideal corresponding to a point $p\in \aff$. Then, we can define the morphism $\Hil{n,1} \rightarrow F_n$ 
to the universal family $F_n$ over $\Hil{n}$ by sending $(I_1,I_2)$ to $(I_1,p)$. Clearly, this is an isomorphism. 
Since $F_n$ is finite flat over the smooth scheme $\Hil{n}$, $F_n$ and hence $\Hil{n,1}$ are Cohen Macaulay.
\end{proof}

Since in the rest of the paper we are not going to talk about any nested Hilbert scheme of points but $\Hil{n,n-1}$, we are going to call $\Hil{n,n-1}$ as \textit{the nested Hilbert scheme} (by dropping the adjective ``smooth'').  The following theorem about the nested Hilbert scheme has been attributed to Tikhomirov (unpublished).

\begin{thm}
The nested Hilbert scheme $\Hil{n,n-1}$ is nonsingular and irreducible of dimension $2n$. 
\end{thm}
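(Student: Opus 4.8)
The plan is to prove nonsingularity by showing that the tangent space $T_{\Hil{n,n-1},(I_\mu,I_\nu)}$ has dimension exactly $2n$ at every \emph{torus-fixed} point $(I_\mu,I_\nu)$, and then to bootstrap from the fixed locus to the whole scheme. Since $\Hil{n,n-1}$ is projective over the smooth affine $\Hil{n-1}$ via the second projection (equivalently is a closed subscheme of the smooth product $\Hil{n}\times\Hil{n-1}$), every irreducible component and every point is specialized, under the $\TT^2$-flow $(t,q)\to 0$, to a torus-fixed point. Upper semicontinuity of the tangent-space dimension then reduces the pointwise regularity statement to the fixed points: if $\dim T_{(I_\mu,I_\nu)}=2n$ for all $(\mu,\nu)\in\y{n,n-1}$, then $\dim T_z\le 2n$ on a $\TT^2$-stable open neighborhood of the fixed locus, hence everywhere on each component containing a fixed point. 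Combined with the already-noted fact that $\Hil{n,n-1}$ has dimension $2n$ (the generic-locus computation given above), this forces $\dim T_z=2n$ for all $z$, which is exactly regularity.

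The heart of the argument is therefore the tangent-space computation at a fixed point, and this is where I would use the explicit bases constructed in Section~\ref{S:cotangent}. The tangent space to $\Hil{n,n-1}$ at $(I_\mu,I_\nu)$ sits inside $T_{\Hil{n},I_\mu}\oplus T_{\Hil{n-1},I_\nu}$ as the subspace of compatible pairs $(\gamma_1,\gamma_2)$, namely those first-order deformations that preserve the inclusion $I_1\subseteq I_2$; concretely, writing $\gamma_1\in\Hom_{\C[x,y]}(I_\mu,\C[x,y]/I_\mu)$ and $\gamma_2\in\Hom_{\C[x,y]}(I_\nu,\C[x,y]/I_\nu)$, the compatibility is that the two induced deformations agree after composing with the natural maps coming from $I_\mu\hookrightarrow I_\nu$ and $\C[x,y]/I_\mu\twoheadrightarrow\C[x,y]/I_\nu$. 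I would exploit the $\TT^2$-action to make this a weight-by-weight bookkeeping problem: the tangent space decomposes into one-dimensional torus weight spaces, the weights on the $\Hil{n}$ factor being the reciprocals $t^{-1-l(\zeta)}q^{a(\zeta)}$ and $t^{l(\zeta)}q^{-1-a(\zeta)}$ computed in Section~\ref{S:cotangent}, with analogous weights on the $\Hil{n-1}$ factor, and I would count how many weights survive the compatibility constraint. The target is to verify the count is $2n$, matching Cheah's dimension and the weights that feed into the denominators $P_1,P_2,P_3$ of Theorem~\ref{T:nestedABL}.

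The main obstacle will be the explicit matching of weights across the corner cell $\zeta=\mu\setminus\nu$: the deformations of $I_\mu$ and of $I_\nu$ interact nontrivially precisely in the row and the column of $\zeta$, which is exactly why Theorem~\ref{T:nestedABL} splits the denominator into the three pieces $P_1$ (cells off the hook of $\zeta$), $P_2$ (cells in the row of $\zeta$), and $P_3$ (cells in the column of $\zeta$). I expect that off the hook the downward/upward homomorphisms $d^*_{hk},u^*_{hk}$ for $\mu$ and for $\nu$ match up in pairs contributing cleanly, while along the row and column the ``sliding'' behavior of the arrows differs between $\mu$ and $\nu$ and must be tracked by hand to see which compatible pairs exist. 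Once the weight multiplicities are shown to total $2n$, regularity at the fixed points follows, and irreducibility follows because each of the $2n$-dimensional components meets the fixed locus while the scheme is connected (it fibers over the connected $\Hil{n-1}$ with connected, in fact irreducible, fibers over the generic locus), so the generic locus — which is irreducible of dimension $2n$ — is dense, forcing a single component. Thus the plan is: (i) realize $T_{(I_\mu,I_\nu)}$ as compatible pairs inside the two explicitly-based tangent spaces; (ii) decompose into $\TT^2$-weight spaces and count survivors to get dimension $2n$; (iii) invoke semicontinuity plus the dimension bound to get regularity everywhere; (iv) deduce irreducibility from connectedness and density of the smooth generic locus.
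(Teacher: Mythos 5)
Before reviewing the mathematics: the paper contains no proof of this theorem to compare against --- it is stated as an unpublished result attributed to Tikhomirov (with Cheah's \cite{Cheah98} as the surrounding reference). Your argument must therefore stand entirely on its own, and as written it has genuine gaps. The most serious is that its declared heart --- verifying that the space of compatible pairs, i.e.\ the kernel of $\phi-\psi$ inside $T_{\Hil{n},I_{\mu}}\oplus T_{\Hil{n-1},I_{\nu}}$, has dimension exactly $2n$ at each fixed point --- is left as an ``expectation.'' Exhibiting $2n$ independent compatible pairs is the easy half; the upper bound, that $\phi-\psi$ has rank $2(n-1)$ so that nothing else survives, is the real content, and it is exactly the row/column sliding analysis you defer. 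Note that you cannot import the paper's later proposition giving the basis $A_1\cup\cdots\cup A_8$: its proof consists of linear independence plus the phrase ``given that $\Hil{n,n-1}$ is smooth and $2n$ dimensional,'' i.e.\ it invokes the very theorem you are proving, so appealing to it would be circular.

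The bootstrapping from fixed points to the whole scheme is also flawed in two places. (a) Your properness claim is false: the second projection $\Hil{n,n-1}\rightarrow\Hil{n-1}$ is not proper (fix $I_2$ and let the extra point of $V(I_1)$ run off to infinity; the corresponding family of $I_1$'s has no limit in $\Hil{n}$), and $\Hil{n-1}$ is not affine; likewise, being a closed subscheme of $\Hil{n}\times\Hil{n-1}$ gives nothing, since $\Hil{n}$ is not proper. The flow-to-a-fixed-point argument instead needs properness of $\sigma_{n,n-1}$ over $S^{n-1}(\aff)\times\aff$, or projectivity of the \emph{first} projection $\eta$ combined with properness of $\Hil{n}\rightarrow S^n(\aff)$; with that in hand the correct deduction is that the closed, $\TT^2$-stable locus $\{z:\dim T_z\geq 2n+1\}$ would have to contain a fixed point and hence is empty --- your phrase ``on an open neighborhood of the fixed locus, hence everywhere on each component'' is a non sequitur as written. (b) Even granting $\dim T_z\leq 2n$ everywhere, regularity requires that every irreducible component have dimension $2n$; the generic-locus computation controls only the closure of the locus of distinct points, and a hypothetical extra component of smaller dimension is excluded neither by the tangent bound nor by your connectedness argument (a surjection onto a connected base with connected fibers does not yield a connected total space without properness, which, per (a), the second projection lacks). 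Supplying a lower bound on the dimension of every component --- e.g.\ an obstruction-theoretic estimate for flags of ideals, which is the standard route in the literature --- is an essential missing ingredient.
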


Let $(I_1,I_2)$ be a point of $H_{n,n-1}$, and let $\tau=(t,q) \in \TT^2$. Then $(\tau \cdot I_1, \tau \cdot I_2)\in \Hil{n,n-1}$, where the action of $\tau$ on an ideal $I$ is as in the remark \ref{R:torusaction}. It is easy to see that a torus fixed point has to be a pair of monomial ideals $(I_1,I_2)$ of $\Hil{n,n-1}$ such that $I_1=I_{\mu}\subsetneq I_2=I_{\mu'}$, where $\mu \in \y{m}$ and $\mu' \in \y{m-1}$. 

Recall that the $n'th$ symmetric product $S^n(\aff)$ as a set of cycles is $S^n(\aff) = \{ \sum_{P\in \aff} a_P [P]:\ a_P\in \N,\ \sum a_P = n\}$, and the Chow morphism(s) $\sigma_i:\Hil{i} \rightarrow S^i(\aff)$, $i\in\{n,n-1\}$ are defined by 
\begin{equation*}
\sigma_i(I) = \sum_{P\in V(I)} n_P [P],\ I\in \Hil{i}
\end{equation*}
where $n_P$ is the multiplicity of the point $P$ in $V(I)$.

Now, if $(I_1,I_2)\in \Hil{n,n-1}$, then $\sigma_{n-1}(I_2)$ is an $n-1$ element subset of $\sigma_n(I_1)$. In other words, if 
\begin{equation*}
\sigma_{n-1}(I_2)= \sum_{i=1}^{n-1} [p_i] \in S^{n-1}(\aff),\ p_i\in V(I_2)
\end{equation*}
then 
\begin{equation*}
\sigma_n(I_1)= (\sum_{i=1}^{n-1} [p_i] )+ [p_n] \in S^n(\aff),
\end{equation*}
where $p_n$ is the \textit{extra point} of $V(I_1)$.

Note that $p_n\in \aff$ might lie in $V(I_2)$ as a point from $\aff$. In any case, it allows us to define the morphism $\rho: \Hil{n,n-1} \rightarrow F_n$ from nested Hilbert scheme onto the universal family $F_n$ on $\Hil{n}$. Recall that $F_n$, as a set
consists of pairs $(I_1,p)\in \Hil{n}\times \aff$ with $p\in I_1$. Hence, the morphism is defined by sending $(I_1,I_2)$ to $(I_1,p_n)$. It is clear that on the generic locus this is an isomorphism.

Let $(I_1,I_2)\in \Hil{n,n-1}$, and let $\mu \in \y{n}$ be a partition of $n$ such that $I_1\in U_{\mu}$. The containment $I_1\subseteq I_2$ induces a vector space injection in the opposite direction
\begin{equation}
\C[x,y]/I_2 \hookrightarrow \C[x,y]/I_1.
\end{equation}
Therefore, if $\basis{\mu}$ is the basis for $\C[x,y]/I_1$, then $\basis{\mu'}$ is a canonical 
basis for $\C[x,y]/I_2$ where $\mu'$ is a partition obtained from $\mu$ by taking a corner cell $\zeta=(h,k)$ 
off from $\mu$ (we shall write $\mu'=\mu \setminus \zeta$). Hence, if $I_1$ belongs to the open $U_{\mu}$, then $I_2$ belongs to $U_{\mu \setminus \zeta}$.

Clearly, in order for studying the Zariski (co)tangent space at $(I_1,I_2)\in \Hil{n,n-1}$, one might use the Pl\"{u}cker 
relations among the generators of the coordinate rings $\shf{\Hil{n}}(U_{\mu})$ and $\shf{\Hil{n-1}}(U_{\mu\setminus \zeta })$. 
Alternatively, as Cheah does in \cite{CheahThesis}, one might exploit the Grothendieck's description of the 
tangent space for the Hilbert schemes. Thus, it turns out that the tangent space of $\Hil{n,n-1}$ at a point $(I_1,I_2)$ 
is canonically isomorphic to the kernel of a homomorphism between two vector spaces described below. 

Let $\phi:\Hom_{\C[x,y]}(I_1, \C[x,y]/I_1) \rightarrow \Hom_{\C[x,y]}(I_1, \C[x,y]/I_2)$ be 
the homomorphism defined by the composition 
\begin{equation}
I_1 \rightarrow \C[x,y]/I_1 \twoheadrightarrow \C[x,y]/I_2,
\end{equation}
and let $\psi: \Hom_{\C[x,y]}(I_2, \C[x,y]/I_2) \rightarrow \Hom_{\C[x,y]}(I_1, \C[x,y]/I_2)$
be the homomorphism defined by the composition
\begin{equation}
I_1 \hookrightarrow I_2 \rightarrow \C[x,y]/I_2.
\end{equation}
Then the kernel of the combined homomorphism
\begin{equation*}
\phi-\psi: \Hom_{\C[x,y]}(I_1, \C[x,y]/I_1) \oplus \Hom_{\C[x,y]}(I_2, \C[x,y]/I_2) \rightarrow \Hom_{\C[x,y]}(I_1, \C[x,y]/I_2),
\end{equation*}
is canonically isomorphic to the tangent space of $\Hil{n,n-1}$ at $(I_1,I_2)$. We shall now describe an explicit basis for the tangent spaces at the torus fixed points.

To this end, let $(I_{\mu},I_{\mu \setminus \zeta})$ be a torus fixed point, also let $\{u_{hk}^*, d_{hk}^*:\ (h,k)\in \mu\}$ and $\{u_{h'k'}^*, d_{h'k'}^*:\ (h',k')\in \mu\setminus \zeta\}$
be the bases for the tangent spaces at $I_{\mu}$ and $I_{\mu \setminus \zeta}$, respectively.

Recall that $\row(\zeta)$ (and $\col(\zeta)$) stands for the set of all cells of $\mu$ that are in the same row (resp. column) as $\zeta$, other than $\zeta$ itself.

We first form the following 8 disjoint subsets of $T_{\Hil{n},I_{\mu}} \oplus T_{\Hil{n-1},I_{\mu\setminus \zeta}} $. 

\begin{enumerate}
\item $A_1 := \{(d_{hk}^*,0):\ (h,k)\in \row(\zeta)\}$.
\item $A_2 := \{(u_{hk}^*,0):\ (h,k)\in \col(\zeta)\}$.
\item $A_3 := \{(u^*_{\zeta},0)\}$.
\item $A_4 := \{(0,d^*_{\zeta})\}$.
\item $A_5 := \{(0,d_{hk}^*):\ (h,k)\in 
\row(\zeta)\}$.
\item $A_6 := \{(0,u_{hk}^*):\ (h,k)\in \col(\zeta)\}$.
\item $A_7 := \{(d_{hk}^*,d_{hk}^*):\ (h,k)\in \mu \setminus 
\row(\zeta)\cup \col(\zeta)\}$.
\item $A_8 := \{(u_{hk}^*,u_{hk}^*):\ (h,k)\in \mu \setminus 
\row(\zeta)\cup \col(\zeta)\}$.
\end{enumerate}

Here, the third and the fourth sets, each of which has a single element account for the arrows at the corner cell $\zeta$. Thereby we justify the use of the symbols $u^*_{\zeta}$ and $d^*_{\zeta}$.

\begin{prop}
The set $A_1\cup \cdots \cup A_8$ of pairs of homomorphisms form a basis for the tangent space of 
$\Hil{n,n-1}$ at the torus fixed point $(I_{\mu},I_{\mu \setminus \zeta})$. 
\end{prop}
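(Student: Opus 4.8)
The plan is to use the identification, set up just above, of the tangent space $T_{(I_\mu,I_{\mu\setminus\zeta})}\Hil{n,n-1}$ with $\ker(\phi-\psi)$ inside $T_{\Hil{n},I_\mu}\oplus T_{\Hil{n-1},I_{\mu\setminus\zeta}}$, together with the fact recalled above (attributed to Tikhomirov, with the tangent dimension computed by Cheah) that $\Hil{n,n-1}$ is smooth of dimension $2n$, so that $\dim\ker(\phi-\psi)=2n$. First I would check that the proposed set has exactly $2n$ elements. Since $\zeta$ is a removable corner, it is simultaneously the top of its column and the right end of its row, so $\row(\zeta)$ has $a'(\zeta)$ cells and $\col(\zeta)$ has $l'(\zeta)$ cells; hence $|A_1|=|A_5|=a'(\zeta)$, $|A_2|=|A_6|=l'(\zeta)$, $|A_3|=|A_4|=1$, and $|A_7|=|A_8|=n-a'(\zeta)-l'(\zeta)-1$, which sum to $2n$. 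It therefore suffices to prove that (i) every listed pair lies in $\ker(\phi-\psi)$ and (ii) the $2n$ listed pairs are linearly independent; the equality of cardinalities then forces them to be a basis.

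\textbf{Independence.} I would dispose of (ii) first, as it is purely formal. Projecting onto the first summand, the first coordinates of the pairs in $A_1,A_2,A_3,A_7,A_8$ are $d^*_{hk}$ for $(h,k)$ in $\row(\zeta)$ and in the complement of the row and column of $\zeta$, together with $u^*_{hk}$ for $(h,k)$ in $\col(\zeta)$, in $\{\zeta\}$, and again in that complement. These index pairwise distinct members of the basis $\{d^*_{hk},u^*_{hk}\}$ of $T_{\Hil{n},I_\mu}$, so in any vanishing linear combination the coefficients of these five families must vanish. What remains is a relation among $A_4,A_5,A_6$, all of which have zero first coordinate, and whose second coordinates are again distinct basis vectors of $T_{\Hil{n-1},I_{\mu\setminus\zeta}}$; hence the remaining coefficients vanish as well.

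\textbf{Membership.} The real content is (i), and I would base it on two tests. Since $\phi$ is post-composition with the quotient $\C[x,y]/I_\mu\to\C[x,y]/I_{\mu\setminus\zeta}$, whose kernel is the line spanned by the class of the monomial $m_\zeta$ indexing the deleted cell, a pair $(\gamma,0)$ lies in $\ker(\phi-\psi)$ exactly when the image of the $\mu$-homomorphism $\gamma$ is contained in $\langle m_\zeta\rangle$. Since $\psi$ is restriction along $I_\mu\hookrightarrow I_{\mu\setminus\zeta}$ and $I_{\mu\setminus\zeta}/I_\mu$ is the one-dimensional module on which $x$ and $y$ act by zero, a pair $(0,\delta)$ lies in $\ker(\phi-\psi)$ exactly when $\delta$ annihilates $I_\mu$. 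For the diagonal families $A_7,A_8$ the cell $(h,k)$ lies off the row and column of $\zeta$, so the column-top and row-end that define the arrow are unaffected by deleting $\zeta$; the arrow $\mathfrak{d}_{hk}$ (resp.\ $\mathfrak{u}_{hk}$) is then literally the same for $\mu$ and for $\mu\setminus\zeta$, and I would verify $\phi=\psi$ on $I_\mu$ by observing that the only value on which the two homomorphisms can disagree is an image equal to $m_\zeta$, which $\phi$ sends to zero and which the $\mu\setminus\zeta$-arrow already records as zero. For the single-factor families $A_1$–$A_6$ I would read off, directly from the sliding description of $d^*_{hk}$ and $u^*_{hk}$ in Section \ref{S:cotangent}, the image of $\gamma$ (resp.\ the restriction of $\delta$ to $I_\mu$) for the corner and cross arrows, and check the corresponding test.

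\textbf{The main obstacle.} Throughout, the computation is $\TT^2$-equivariant, so I would group the arrows by torus weight, using the eigenvalues of Remarks \ref{R:torusaction} and \ref{R:eigenvalues}, and reduce each verification to a one-dimensional check within a single weight space; this is what keeps the bookkeeping tractable. The hard part will be precisely this sliding-rule accounting in and around the corner $\zeta$: one must track carefully how the head and tail of each arrow move relative to $\zeta$, to its row, and to its column, in order to decide exactly which arrows of $I_\mu$ have image confined to $\langle m_\zeta\rangle$, which arrows attached to the new generator $m_\zeta$ annihilate all of $I_\mu$, and how the surviving arrows pair across the two factors. This corner analysis is orientation-dependent and is where the geometry of removing $\zeta$ genuinely enters; the remainder is the formal count above.
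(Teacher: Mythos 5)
Your proposal reproduces the paper's proof skeleton exactly: count $2n$ candidate vectors, prove linear independence, and invoke smoothness of $\Hil{n,n-1}$ (dimension $2n$) to conclude. Your cardinality count is correct, and your independence argument is actually more careful than the paper's one-sentence version: the paper's stated reason (each pair has at least one coordinate equal to a basis vector) does not by itself exclude dependence, whereas your two-stage argument -- project to the first summand, where the first coordinates of $A_1,A_2,A_3,A_7,A_8$ are pairwise distinct basis vectors of $T_{\Hil{n},I_{\mu}}$, then observe that the remaining pairs $A_4,A_5,A_6$ have distinct basis vectors of $T_{\Hil{n-1},I_{\mu\setminus\zeta}}$ as second coordinates -- is complete.

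The divergence is your step (i), membership in $\ker(\phi-\psi)$, which the paper's proof omits entirely; you are right that it is indispensable, since independence plus cardinality only produce a basis of \emph{some} $2n$-dimensional subspace of the ambient $(4n-2)$-dimensional direct sum, and your two membership criteria are the correct ones. The gap is that you defer exactly this verification, and it is not a deferrable formality: carried out, it fails for the sets as the paper defines them. Take $\mu=(2,2,1)$, $\zeta=(2,0)$, so $\mu\setminus\zeta=(2,2)$ and $m_{\zeta}=x^{2}$, and consider the cell $(1,0)\in\col(\zeta)$. The tail $xy^{2}$ of the arrow $\mathfrak{u}_{10}$ is not a canonical generator of $I_{\mu}$, so the sliding rule forces $u^{*}_{10}(y^{2})=x$; thus the image of $u^{*}_{10}$ is not contained in $\langle m_{\zeta}\rangle$, and since $x$ is a nonzero basis monomial in $\C[x,y]/I_{(2,2)}$ we get $\phi(u^{*}_{10})\neq 0$, i.e. $(u^{*}_{10},0)\in A_2$ does not lie in the tangent space. (The weight space of $\ker(\phi-\psi)$ at this torus weight is one-dimensional, spanned instead by the mixed pair $(u^{*}_{10},u^{*}_{00})$, where $u^{*}_{00}$ is the basis homomorphism at $I_{(2,2)}$ sending $y^{2}\mapsto x$; note the shift of cell index between the two factors.) Similar failures occur in the other single-factor families: for $\zeta=(1,1)$ the element $(0,d^{*}_{10})\in A_5$, with $d^{*}_{10}$ computed in $\mu\setminus\zeta=(2,1,1)$, has $\psi(d^{*}_{10})(x^{3})=x\neq 0$. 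So the corner and cross-term analysis you flag as ``the main obstacle'' is indeed where all the content lies, but completing it refutes, rather than proves, the proposition in its stated form: the families indexed by $\row(\zeta)$, $\col(\zeta)$ and the corner must be replaced by mixed pairs coupling a homomorphism of $I_{\mu}$ with one of $I_{\mu\setminus\zeta}$. In this sense your proposal and the paper's proof share the same essential gap; the difference is that your outline contains the step at which the problem becomes visible, while the paper's two-sentence argument never reaches it.
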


\begin{proof}
Linear independence is clear from the fact that at least one of the coordinates of any element from 
$A_1\cup \cdots \cup A_8$ is a basis vector in one of the tangent spaces $T_{\Hil{n},I_{\mu}}$ or 
$T_{\Hil{n-1},I_{\mu \setminus \zeta }}$.  Given that $\Hil{n,n-1}$ is smooth and $2n$ dimensional, 
tangent space $T_{\Hil{n,n-1},(I_1,I_2)}$ is $2n$ dimensional for every point $(I_1,I_2)$ of the nested 
Hilbert scheme $\Hil{n,n-1}$, and therefore, we are done. 
\end{proof}

\section{\textbf{Zero fibers of $\Hil{n}$ and $\Hil{n,n-1}$}\label{S:zerofiber}}

In this section we shall look closely at the zero fibers  $\sigma_n^{-1}(n\cdot [0])\subseteq \Hil{n}$ and $\sigma_{n,n-1}^{-1}((n-1) \cdot [0] , 0) \subseteq \Hil{n,n-1}$, where $\sigma_n$ is the 
Chow morphism for $\Hil{n}$, and $\sigma_{n,n-1}$ is defined as follows. Let $(I_1,I_2)$ be a point of $\Hil{n,n-1}$. 
Then $\C[x,y]$-module $I_2/I_1$ is of length 1, hence, it is isomorphic to $\C[x,y]/\mathfrak{m}$ for some maximal 
ideal $\mathfrak{m}=(x-x_n,y-y_n)\subseteq \C[x,y]$. Then, the \textsl{distinguished point} $p_n\in V(I_1)$ of the 
pair $(I_1,I_2)$ has coordinates $(x_n,y_n)$. We define $\sigma_{n,n-1}:\Hil{n,n-1} \rightarrow S^{n-1}(\aff) \times \aff$ by 
\begin{equation*}
\sigma_{n,n-1}(I_1,I_2) = (\sigma_{n-1}(I_2),(x_n,y_n)).
\end{equation*}
After reviewing some known results, we shall prove that the zero fiber $Z_{n,n-1}$ of $\Hil{n,n-1}$ is Cohen-Macaulay of dimension $n-1$.

One of the earliest results on the zero fiber of $\Hil{n}$ is due to 
Brian{\c{c}}on.
\begin{thm}\cite{Bri77}
The zero fiber $Z_n := \sigma_n(n\cdot [0])\subseteq \Hil{n}$ is an irreducible subvariety of dimension $n-1$. 
\end{thm}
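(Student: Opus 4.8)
The plan is to exhibit a dense open irreducible subvariety of $Z_n$ of the expected dimension $n-1$, the \emph{curvilinear locus}, and then to prove that it is in fact dense. Recall that $Z_n$ consists of the colength-$n$ ideals $I$ whose cosupport is the origin, equivalently $\mathfrak{m}^N \subseteq I$ for $N \gg 0$ with $\mathfrak{m}=(x,y)$. Let $Z_n^{\circ}\subseteq Z_n$ be the set of $I$ for which the local algebra $\C[x,y]/I$ is isomorphic to $\C[t]/(t^n)$; geometrically these are the clusters lying on a smooth curve germ through the origin. First I would parametrize $Z_n^{\circ}$. Over the chart of tangent directions transverse to the $y$-axis, such an ideal is uniquely of the form $I=(\,y-a_1x-a_2x^2-\cdots-a_{n-1}x^{n-1},\ x^n\,)$, since then $\C[x,y]/I\cong \C[x]/(x^n)$; this identifies the chart with $\Spec \C[a_1,\dots,a_{n-1}]\cong \aff^{n-1}$, and the complementary chart (curves tangent to the $y$-axis) is entirely analogous. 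Gluing the two charts exhibits $Z_n^{\circ}$ as an $\aff^{n-2}$-bundle over the $\PP^1$ of tangent directions, hence as a smooth irreducible variety of dimension $(n-2)+1=n-1$.

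Next I would check that $Z_n^{\circ}$ is open and nonempty in $Z_n$. Nonemptiness is clear, as $(y,x^n)\in Z_n^{\circ}$. For openness, note that $Z_n^{\circ}$ is precisely the locus where the minimal number of algebra generators of $\C[x,y]/I$, namely $\dim_{\C}\mathfrak{m}/(I+\mathfrak{m}^2)$, attains its minimum value $1$ (for $n\geq 2$). Writing this as $2-\dim(\text{image of }I\text{ in }\mathfrak{m}/\mathfrak{m}^2)$ shows it is upper semicontinuous in $I$, so its minimal locus is open. Consequently $\overline{Z_n^{\circ}}$ is an irreducible component of $Z_n$ of dimension $n-1$.

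It then remains to show $Z_n=\overline{Z_n^{\circ}}$, equivalently that $\dim Z_n\leq n-1$ at every point, which forces $\overline{Z_n^{\circ}}$ to be the only component and hence $Z_n$ to be irreducible of dimension $n-1$. Since $Z_n$ is projective (it is a fiber of the projective Hilbert--Chow morphism $\sigma_n$) and carries the residual $\TT^2$-action fixing the origin, every point of $Z_n$ flows under a generic one-parameter subgroup to one of the monomial ideals $I_{\mu}$, $\mu\in\y{n}$; thus $Z_n$ is the union of the attracting cells of the $I_{\mu}$. For a $\C^*$-invariant subvariety the attracting cell at a fixed point has dimension at most the number of strictly positive weights occurring in its Zariski tangent space, so it suffices to bound, for each $\mu$, the number of positive weights in $T_{I_{\mu}}Z_n$. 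Using the explicit weight basis of $T_{I_{\mu}}\Hil{n}$ from Section~\ref{S:cotangent}, together with the fact that $T_{I_{\mu}}Z_n$ is cut out inside it by the differentials of the pulled-back power sums $\sigma_n^{*}p_{rs}$, this reduces to a combinatorial count over the cells of $\mu$ giving at most $n-1$ positive weights.

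The main obstacle is exactly this last bound, equivalently the density of $Z_n^{\circ}$: one must control $Z_n$ at the singular monomial points, which is delicate because $Z_n$ is in general singular and a priori non-reduced, so the naive tangent-space count must be justified carefully. Two routes are available. The geometric route (essentially Brian\c{c}on's original argument) replaces the count by an explicit deformation: given $I\in Z_n$, one spreads the cluster $\Spec\C[x,y]/I$ along a smooth curve by a one-parameter family whose general member is curvilinear and whose special member is $I$, placing $I$ directly in $\overline{Z_n^{\circ}}$. The combinatorial route carries out the weight count above, stratifying $Z_n$ by the Hilbert function of $\C[x,y]/I$ and bounding each stratum by $n-1$ via induction on $n$. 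Either way, once $\dim Z_n\leq n-1$ is established the theorem follows, since $\overline{Z_n^{\circ}}$ is then the unique top-dimensional component and exhausts $Z_n$.
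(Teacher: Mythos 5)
The paper does not actually prove this statement: it is quoted from Brian\c{c}on's paper \cite{Bri77} as a known result, so there is no internal proof to compare against, and your proposal has to stand on its own as a proof of Brian\c{c}on's theorem. Its preliminary steps are correct and standard: the two-chart parametrization of the curvilinear locus $Z_n^{\circ}$ by ideals $(y-a_1x-\cdots-a_{n-1}x^{n-1},\,x^n)$, exhibiting it as a smooth irreducible variety of dimension $n-1$ fibered over the $\PP^1$ of tangent directions, and the openness of $Z_n^{\circ}$ in $Z_n$ via upper semicontinuity of $\dim_{\C}\mathfrak{m}/(I+\mathfrak{m}^2)$; it is also true that $\overline{Z_n^{\circ}}$ is then an irreducible component of $Z_n$ of dimension $n-1$.

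The gap lies in the reduction and in the missing core. First, your claimed equivalence between $Z_n=\overline{Z_n^{\circ}}$ and the bound $\dim Z_n\leq n-1$ is false: a dimension bound never rules out additional irreducible components, of dimension $n-1$ or smaller, containing no curvilinear point. This is not a pedantic worry --- for three or more variables the punctual Hilbert scheme really is reducible for large $n$ (Iarrobino, cf.\ \cite{Iarrobino77}), so irreducibility is genuinely stronger than any dimension count. Consequently your ``combinatorial route'' (the Bia\l ynicki-Birula weight count at the monomial ideals $I_{\mu}$, which in two variables is essentially the Ellingsrud--Str\o mme computation \cite{EllStrom87}) could at best establish $\dim Z_n\leq n-1$, and would still leave irreducibility unproved; it also silently requires control of the tangent spaces of the \emph{reduced} fiber at singular points, which you acknowledge but do not supply. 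Second, the ``geometric route'' is stated circularly: to say one ``spreads the cluster along a smooth curve by a one-parameter family whose general member is curvilinear and whose special member is $I$'' is precisely the assertion $I\in\overline{Z_n^{\circ}}$ that must be proved, and constructing such a degeneration for an arbitrary punctual ideal is the entire content of Brian\c{c}on's work (via Weierstrass division and delicate deformations of standard bases along the staircase of $I$); none of that construction appears here. So after your correct preliminary steps, what remains unproved is exactly the theorem itself.
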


Let $\pi: F_n \rightarrow \Hil{n}$ be the (first) projection from the universal family $F_n$ onto $\Hil{n}$. As a set, the scheme theoretic fiber
\begin{equation*}
\pi^{-1}(Z_n)= \{(I, n\cdot[0])\in F_n:\ I\in Z_n\} \subseteq F_n
\end{equation*}
maps bijectively onto $Z_n$. Since we do not know if the preimage $\pi^{-1}(Z_n) $ is reduced, we can not conclude that these two subschemes are isomorphic to each other. In fact, the isomorphism is true if we consider the reduced scheme structure on $\pi^{-1}(Z_n)$. We shall use the superscript ``red'' on $\pi^{-1}(Z_n)$ to indicate that it is considered as the reduced scheme obtained from $\pi^{-1}(Z_n)$. 

\begin{thm}\cite{Hai98}\label{T:cmHilbert}
The projection $\pi:F_n \rightarrow \Hil{n}$ induces an isomorphism between $\pi^{-1}(Z_n)^{red}$ and $Z_n$. Furthermore, $\pi^{-1}(Z_n)^{red}$ is a local complete intersection, hence, so is $Z_n\subseteq \Hil{n}$. Therefore, $Z_n$ is Cohen Macaulay. 
\end{thm}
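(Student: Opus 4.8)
The plan is to handle the three assertions in order, since two of them are formal and the weight of the theorem sits in the middle one. First, for the isomorphism $\pi^{-1}(Z_n)^{red}\cong Z_n$, I would exhibit an explicit inverse to $\pi$. Define $s\colon Z_n\to \Hil{n}\times\aff$ by $s(I)=(I,0)$. For every $I\in Z_n$ the origin lies in $V(I)$, since $V(I)$ is supported at $0$, so the image of $s$ lands set-theoretically in the closed subscheme $F_n$; as $Z_n$ is reduced, $s$ factors through $F_n$ and in fact through $\pi^{-1}(Z_n)^{red}$, and by construction its image is exactly $\pi^{-1}(Z_n)$. Thus $\pi\circ s=\mathrm{id}_{Z_n}$. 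Since the restriction of $\pi$ to $\pi^{-1}(Z_n)^{red}$ is a bijection onto $Z_n$, the morphisms $s\circ\pi$ and the identity agree on points of the reduced scheme $\pi^{-1}(Z_n)^{red}$, hence coincide; so $s$ and $\pi$ are mutually inverse and the first claim follows.

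For the complete intersection claim I would transport the question along this isomorphism to $Z_n\subseteq \Hil{n}$ and use the dimension count: $\dim\Hil{n}=2n$ while $\dim Z_n=n-1$ by Brian\c{c}on's theorem, so $Z_n$ has codimension $n+1$. Because $\Hil{n}$ is smooth, hence Cohen--Macaulay, it is enough to show that near each torus-fixed point $I_\mu\in Z_n$ the ideal of $Z_n$ is generated by $n+1$ elements: in a Cohen--Macaulay local ring, $c$ elements generating an ideal of height $c$ automatically form a regular sequence, so local generation by $\operatorname{codim}Z_n=n+1$ functions would already give the local complete intersection property. The natural defining functions are the pullbacks $\sigma^{*}p_{rs}$ of the polarized power sums, equivalently the non-leading coefficients of the characteristic polynomials of the commuting tautological multiplication operators $M_x,M_y$ acting on the fibre $\C[x,y]/I$ of $B_n$ over $U_\mu$, whose simultaneous vanishing says precisely that $M_x$ and $M_y$ are nilpotent, i.e.\ that $I$ is supported at the origin.

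The hard part, and the step where I expect essentially all of the difficulty to lie, is that these functions number $2n$, far exceeding the codimension $n+1$: the nilpotency locus of $M_x$ and that of $M_y$ are each of codimension $n$ and meet in excess dimension, so the obvious equations are manifestly not a regular sequence. The crux is therefore to extract, locally on each chart $U_\mu$, the \emph{correct} regular sequence of length $n+1$ that still cuts out $Z_n$ scheme-theoretically. This requires a careful analysis of the local coordinates $c^{hk}_{rs}$ together with the commuting module structure of $M_x$ and $M_y$ on $B_n$, isolating the $n+1$ genuinely independent conditions among the power-sum equations; this is the one place where a substantive computation, rather than a formal argument, is unavoidable.

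Finally, once $Z_n$ is known to be a local complete intersection in the smooth variety $\Hil{n}$, the Cohen--Macaulay property is immediate: a local complete intersection is locally a quotient of a regular local ring by a regular sequence, hence a complete intersection ring, and complete intersection rings are Cohen--Macaulay. Transporting back along $\pi^{-1}(Z_n)^{red}\cong Z_n$ yields all three statements of the theorem, with both the isomorphism and the implication ``local complete intersection $\Rightarrow$ Cohen--Macaulay'' being formal and only the production of the length-$(n+1)$ regular sequence carrying the real content.
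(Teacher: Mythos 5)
Your first and third steps are fine: the section $s(I)=(I,0)$ does invert $\pi$ on reduced schemes, and ``local complete intersection $\Rightarrow$ Cohen--Macaulay'' is formal. But the middle step, which you yourself identify as carrying all the content, is not a proof --- it is a statement that a proof is needed. Saying ``the crux is therefore to extract, locally on each chart $U_{\mu}$, the correct regular sequence of length $n+1$'' and that ``a substantive computation \ldots is unavoidable'' leaves the theorem unproved: producing that regular sequence \emph{is} the theorem.

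Moreover, your plan runs the transport in the wrong direction, and in doing so discards the one idea that makes the computation tractable. The theorem is phrased in terms of $\pi^{-1}(Z_n)^{red}$ precisely because the local complete intersection structure is exhibited on the universal family $F_n$, not on $\Hil{n}$: since $F_n\subseteq \Hil{n}\times \aff$, the coordinates $x,y$ of the tautological point are regular functions on $F_n$, and Haiman shows that on $\pi^{-1}(U_{\mu})$ the ideal of $\pi^{-1}(Z_n)^{red}$ is generated by
\begin{equation*}
(x,\ y,\ p_{hk}:\ (h,k)\in \mu\setminus (0,0)),
\end{equation*}
i.e.\ by $2+(n-1)=n+1$ elements. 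Since $F_n$ is finite flat over the smooth scheme $\Hil{n}$, it is Cohen--Macaulay of dimension $2n$, and $\pi^{-1}(Z_n)^{red}\cong Z_n$ has dimension $n-1$ by Brian\c{c}on, so these $n+1$ generators match the codimension and automatically form a regular sequence --- exactly the mechanism you invoke, but applied where it actually works. The two equations $x=y=0$ pin the distinguished point to the origin, and only $n-1$ power sums (those indexed by the cells of $\mu$ other than $(0,0)$, not your count of $2n$) are then needed; the lci property is afterwards transported to $Z_n$ along the isomorphism of the first part. Working directly on $Z_n\subseteq \Hil{n}$, as you propose, you have no analogue of the functions $x,y$, and no mechanism for whittling the power-sum generators down to $n+1$; this is why your approach stalls exactly at the step you flagged.
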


In the course of the proof, Haiman shows that the zero fiber $\pi^{-1}(Z_n)^{red}$ is a locally complete intersection defined by the ideal sheaf 
$\mathcal{I}$ given locally on the open subset $\pi^{-1}(U_{\mu})\subseteq U_{\mu}\times \aff \subseteq F_n$, by  
\begin{equation*}
\mathcal{I}(\pi^{-1}(U_{\mu}))= (x,y,p_{hk}:\ (h,k)\in \mu \setminus (0,0)).
\end{equation*}
Here, the function $p_{rs}$ is the pull back of the (polarized) power sum
symmetric function 
\begin{equation*}
x_1^ry_1^s +\cdots x_n^ry_n^s \in \C[x_1,y_1,...,x_n,y_n]^{\mathfrak{S}_n}
\end{equation*}
via the composition of the morphisms $F_n \xrightarrow{\pi} \Hil{n} \xrightarrow{\sigma_n} S^n(\aff)=\Spec \C[\mathbf{x},\mathbf{y}]^{\mathfrak{S}_n}$.

In \cite{Cheah98}, using Bia{\l}ynicki-Birula decomposition Cheah finds a cellular decomposition for the nested Hilbert scheme $\Hil{n,n-1}$ of points. In particular she calculates how many (topological) cells of dimension $i$ exists in the zero fiber $Z_{n,n-1} = \sigma_{n,n-1}^{-1}(n\cdot [0], 0)\subseteq \Hil{n,n-1}$.

\begin{thm} (\cite{Cheah98}, theorem 3.3.5) Let $b_{k,k-1}(i)$ denote the number of $i$-dimensional (topological) cells of $Z_{n,n-1}$. Then
\begin{equation}\label{E:dimension}
\sum_{k=1}^{\infty} \sum_i b_{k,k-1}(i) v^it^k = \frac{t}{1-tv}\prod_{k=1}^{\infty}\frac{1}{1-t^kv^{k-1}}.
\end{equation}
\end{thm}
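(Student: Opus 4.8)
The plan is to obtain the formula from a Białynicki-Birula (BB) decomposition of $\Hil{n,n-1}$ that is compatible with the zero fiber, to read off the cell dimensions from the torus weights on the tangent spaces, and to finish with a generating-function identity. First I would choose a generic one-parameter subgroup $\rho\colon\C^*\to\TT^2$, $\rho(s)=(s^{w_1},s^{w_2})$ with $w_1,w_2>0$ and $w_1/w_2$ irrational, so that its fixed locus on $\Hil{n,n-1}$ equals the $\TT^2$-fixed locus --- the finite set of pairs $(I_\mu,I_{\mu\setminus\zeta})$ with $\mu\in\y{n}$ and $\zeta$ a removable corner of $\mu$ --- and so that $\rho$ contracts the base $S^{n-1}(\aff)\times\aff$ of $\sigma_{n,n-1}$ to its central point $((n-1)\cdot[0],0)$ as $s\to 0$. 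Since $\Hil{n,n-1}$ is smooth of dimension $2n$ by the theorem of Tikhomirov quoted above, and since $\sigma_{n,n-1}$ is projective, every limit $\lim_{s\to0}\rho(s)\cdot(I_1,I_2)$ exists and lies in the fixed locus; hence the attracting sets $C_{(\mu,\zeta)}^{+}=\{x:\lim_{s\to0}\rho(s)x=(I_\mu,I_{\mu\setminus\zeta})\}$ are affine cells partitioning $\Hil{n,n-1}$.

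Next I would prove that $Z_{n,n-1}$ is a union of the locally closed pieces $C_{(\mu,\zeta)}^{+}\cap Z_{n,n-1}$ and that each of these is itself an affine cell. Because $\sigma_{n,n-1}$ is $\rho$-equivariant and the central point of the base is $\TT^2$-fixed, the subvariety $Z_{n,n-1}$ is $\rho$-invariant and the limit map carries it into itself, so the ambient decomposition restricts to a decomposition of $Z_{n,n-1}$. That the pieces are genuinely affine spaces I would deduce from Haiman's local complete-intersection equations for the zero fiber (the ideal $(x,y,p_{hk})$ appearing in Theorem \ref{T:cmHilbert}) together with the nested local coordinates, which should exhibit $C_{(\mu,\zeta)}^{+}\cap Z_{n,n-1}$ as a coordinate subspace of the affine space $C_{(\mu,\zeta)}^{+}$. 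The dimension of the cell at $(I_\mu,I_{\mu\setminus\zeta})$ is then the number of coordinate directions that survive, equivalently the number of $\rho$-attracting tangent directions lying along $Z_{n,n-1}$ at the fixed point.

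To count these I would use the explicit basis $A_1\cup\cdots\cup A_8$ of $T_{(I_\mu,I_{\mu\setminus\zeta})}\Hil{n,n-1}$ built in the previous section. Each basis vector is a $\TT^2$-eigenvector, and the collection of their weights is exactly the set of reciprocals of the monomials $t^{\alpha}q^{\beta}$ occurring in the factors $(1-t^{\alpha}q^{\beta})$ of $(1-t)(1-q)P_1(\mu,\nu)P_2(\mu,\nu)P_3(\mu,\nu)$ in Theorem \ref{T:nestedABL}. Evaluating a weight $t^{\alpha}q^{\beta}$ on $\rho$ gives $s^{w_1\alpha+w_2\beta}$, and for generic $w_1,w_2$ the sign of $w_1\alpha+w_2\beta$ decides whether the direction is attracting. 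Combining this with the local equations --- which remove the directions transverse to $Z_{n,n-1}$ coming from the support-at-origin conditions $x,y,p_{hk}$ --- should yield a closed combinatorial expression $d(\mu,\zeta)$ for the cell dimension in terms of the arm and leg data of $\zeta$ and of the cells of $\row(\zeta)$ and $\col(\zeta)$.

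The final step is the generating-function identity $\sum_{(\mu,\zeta)}v^{d(\mu,\zeta)}t^{|\mu|}=\frac{t}{1-tv}\prod_{k\ge1}(1-t^kv^{k-1})^{-1}$. Here I would reorganize the left-hand sum by recording for each marked partition the pair $(\nu,\zeta)$ with $\nu=\mu\setminus\zeta$, matching the product $\prod_{k\ge1}(1-t^kv^{k-1})^{-1}$ with the zero-fiber generating function of the ordinary punctual Hilbert scheme (whose cell at a partition $\lambda\vdash m$ has dimension $m-\ell(\lambda)$) and the prefactor $\frac{t}{1-tv}=\sum_{m\ge1}t^mv^{m-1}$ with the extra choice of removable corner introduced by the nesting. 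I expect the main obstacle to be the second and third steps: establishing that the intersections with the zero fiber are honest affine cells rather than merely locally closed pieces, and extracting the exact dimension formula $d(\mu,\zeta)$, since $Z_{n,n-1}$ is generally singular and one must control it near each fixed point; once $d(\mu,\zeta)$ is in hand, the generating-function bookkeeping should be routine.
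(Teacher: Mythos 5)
The first thing to say is that the paper you are being compared against contains no proof of this statement: it is quoted from Cheah (\cite{Cheah98}, Theorem 3.3.5) and used only to deduce Corollary \ref{C:dimnested}. So the real comparison is with Cheah's own argument, which, like yours, is built on the Bia{\l}ynicki--Birula decomposition; at that level your strategy is the right one. The gap is that your plan fails exactly at the step you yourself flag as the ``main obstacle,'' and the repair you suggest cannot work. You run the contracting flow (both weights positive), so the attracting cells $C^{+}_{(\mu,\zeta)}$ fill up all of $\Hil{n,n-1}$, and you then need each intersection $C^{+}_{(\mu,\zeta)}\cap Z_{n,n-1}$ to be an affine cell. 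The Bia{\l}ynicki--Birula theorem says nothing about intersections of cells with invariant closed subvarieties; Haiman's local complete intersection equations $(x,y,p_{hk})$ are not coordinates adapted to a BB cell, so there is no reason they should cut out a ``coordinate subspace'' of it; and your dimension recipe, ``the number of attracting tangent directions lying along $Z_{n,n-1}$,'' is not even well defined at fixed points where $Z_{n,n-1}$ is singular, since there the Zariski tangent space of $Z_{n,n-1}$ exceeds $\dim Z_{n,n-1}$ and carries no memory of the cell structure. (Already for the ordinary zero fiber $Z_3$, the fixed point $\mathfrak{m}^2=(x^2,xy,y^2)$ is a singular point with three-dimensional tangent space, while the cell through it must be one-dimensional.) Since $Z_{n,n-1}$ is genuinely singular in general --- the paper proves it is Cohen--Macaulay, not smooth --- this step is the entire content of the theorem rather than a technicality.

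The missing idea, which is how Ellingsrud--Str\o mme \cite{EllStrom87} treat $Z_n$ and how Cheah treats the nested case, is to reverse the flow: choose the one-parameter subgroup so that points of $\aff$ move \emph{away} from the origin as $s\to 0$. Then $(I_1,I_2)\in\Hil{n,n-1}$ has a limit if and only if it is supported at the origin: any other point of the support escapes to infinity (so the image under $\sigma_{n,n-1}$ diverges and no limit can exist), whereas a pair supported at the origin has its whole orbit inside $Z_{n,n-1}=\sigma_{n,n-1}^{-1}((n-1)\cdot[0],0)$, which is projective, so the limit exists and is a fixed point. Hence $Z_{n,n-1}$ is the disjoint union of \emph{entire} BB cells of the smooth ambient $\Hil{n,n-1}$ --- no intersection with a singular variety ever occurs --- and each cell is an affine space whose dimension is the number of attracting weights among the $2n$ ambient tangent weights at $(I_{\mu},I_{\mu\setminus\zeta})$, exactly the data your sets $A_1,\dots,A_8$ encode. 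What remains is the bookkeeping you defer as routine, and it is where your heuristic matching of the two factors falls short of a proof: carrying out the weight count for a suitable generic choice of weights gives a cell dimension depending only on $\mu$ and not on the removed corner, namely $n-\ell(\mu)$ with $\ell(\mu)$ the number of parts (up to conjugating conventions), and the identity then follows from the bijection $(\mu,\zeta)\mapsto(m,\lambda)$, where $m$ is the length of the row of $\zeta$ and $\lambda$ is $\mu$ with that row deleted: one checks $n-\ell(\mu)=(m-1)+\bigl(|\lambda|-\ell(\lambda)\bigr)$, that $\sum_{m\geq 1}t^{m}v^{m-1}=t/(1-tv)$, and that $\sum_{\lambda}t^{|\lambda|}v^{|\lambda|-\ell(\lambda)}=\prod_{k\geq 1}(1-t^{k}v^{k-1})^{-1}$, since each part of size $k$ contributes $t^{k}v^{k-1}$.
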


Using this result, one can easily compute the dimension of $Z_{n,n-1}$. 

\begin{cor}\label{C:dimnested}
The zero fiber $Z_{n,n-1}$ is of dimension $n-1$.  
\end{cor}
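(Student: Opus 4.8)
The plan is to read off the dimension of $Z_{n,n-1}$ directly from Cheah's generating function (\ref{E:dimension}), using the fact that a Bia{\l}ynicki-Birula decomposition presents the variety as a disjoint union of affine cells, so that $\dim Z_{n,n-1}$ equals the largest $i$ for which $b_{n,n-1}(i)\neq 0$. Concretely, I would fix the exponent $n$ of $t$ and determine the highest power of $v$ occurring in the coefficient of $t^n$ on the right-hand side of (\ref{E:dimension}).

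First I would expand the two factors as formal power series in $t$ and $v$. The linear factor gives
\begin{equation*}
\frac{t}{1-tv} = \sum_{m\geq 0} t^{m+1} v^m,
\end{equation*}
so a term here carries $t$-degree $m+1$ and $v$-degree $m$, exactly one less than its $t$-degree; in particular the lowest term is $t$, so this factor always contributes $t$-degree $\geq 1$. Each factor of the infinite product expands as
\begin{equation*}
\frac{1}{1-t^k v^{k-1}} = \sum_{j\geq 0} t^{kj} v^{(k-1)j},
\end{equation*}
so a term of $t$-degree $kj$ coming from the $k$-th factor carries $v$-degree $(k-1)j = kj - j$, again strictly less than its $t$-degree whenever $j>0$.

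Next I would bound the $v$-degree of an arbitrary monomial $t^n v^i$ appearing in the whole product. Write $a\geq 1$ for the $t$-degree contributed by the linear factor and $b\geq 0$ for the total $t$-degree contributed by the infinite product, so that $a+b=n$. The expansions above show that the linear factor contributes $v$-degree $a-1$, while the product contributes $v$-degree exactly
\begin{equation*}
\sum_k (k-1)j_k = \Bigl(\sum_k k\,j_k\Bigr) - \Bigl(\sum_k j_k\Bigr) = b - J,
\end{equation*}
where $J=\sum_k j_k\geq 1$ whenever $b\geq 1$. Hence $i = (a-1)+(b-J)\leq (a-1)+(b-1)=n-2$ as soon as $b\geq 1$, whereas taking nothing from the product ($b=0$, $a=n$) yields the single monomial $t^n v^{n-1}$. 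Therefore the maximal power of $v$ in the coefficient of $t^n$ is $n-1$, and it occurs with coefficient $1$.

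I expect the only delicate point to be the observation in the last paragraph that every factor transfers strictly less $v$-degree than $t$-degree, which forces the optimum to use the linear factor exclusively; the rest is routine bookkeeping. Combining $b_{n,n-1}(n-1)=1$ and $b_{n,n-1}(i)=0$ for all $i>n-1$ with the cellular decomposition then gives $\dim Z_{n,n-1}=n-1$, completing the proof.
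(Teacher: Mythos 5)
Your proof is correct and takes essentially the same route as the paper: both expand Cheah's generating function (\ref{E:dimension}) and read off that the coefficient of $t^n$ contains $v^{n-1}$ with nonzero coefficient and no higher power of $v$, then invoke the cellular decomposition. Your degree bookkeeping (splitting the $t$-degree between the linear factor and the infinite product and noting each factor loses at least one unit of $v$-degree) is in fact a more explicit justification of the step the paper merely asserts.
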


\begin{proof}
To see this, we simply expand the product in the equation~(\ref{E:dimension}) into a summation.
\begin{eqnarray*}
\frac{t}{1-tv}\prod_{k=1}^{\infty}\frac{1}{1-t^kv^{k-1}} &=& \frac{t}{1-tv}\cdot \frac{1}{1-t}\cdot \frac{1}{1-t^2v}\cdot \frac{1}{1-t^3v^2} \cdots\\
&=& t(1+(tv)+(tv)^2+\cdots)(1+t+t^2+\cdots)\cdots\\
&=& \sum_{k=1}^{\infty} (a_0+a_1v+\cdots+a_{k-1}v^{k-1})t^k.
\end{eqnarray*}
for some $a_i\in \N$. Observe, for a fixed $k$ the coefficient of $v^{k-1}$ cannot be zero and there are no higher 
powers of $v$ than $k-1$. Therefore, $Z_{k,k-1}$ has cells of highest dimension $k-1$, 
showing that it is $k-1$ dimensional.
\end{proof}

Now we can prove an analog of the theorem~(\ref{T:cmHilbert}) for the zero fiber $Z_{n,n-1}$ in the smooth 
nested Hilbert scheme of points.

\begin{thm}
The zero fiber $Z_{n,n-1}\subseteq \Hil{n,n-1}$ is a locally complete intersection subscheme of the nonsingular scheme $\Hil{n,n-1}$, therefore it is Cohen Macaulay. 
\end{thm}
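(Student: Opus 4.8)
The plan is to mimic the structure of Haiman's proof of Theorem~\ref{T:cmHilbert} for $Z_n \subseteq \Hil{n}$, transporting it to the nested setting via the auxiliary scheme over the universal family. First I would introduce the analog of the map $\pi$: the morphism $\rho:\Hil{n,n-1}\to F_n$ constructed above, sending $(I_1,I_2)$ to $(I_1,p_n)$, where $p_n$ is the distinguished point. Composing $\rho$ with $\pi:F_n\to\Hil{n}$ and then with the Chow morphism $\sigma_n$ lets me pull back the polarized power sums $p_{rs}$ to regular functions on $\Hil{n,n-1}$; concretely, on the chart $U_{\mu}\times U_{\mu\setminus\zeta}$ (intersected with $\Hil{n,n-1}$) these, together with the two coordinate functions $x,y$ of the distinguished point, should cut out $Z_{n,n-1}$ set-theoretically, since a point lies in $Z_{n,n-1}$ exactly when $\sigma_{n,n-1}(I_1,I_2)=(n\cdot[0],0)$, i.e.\ when the whole cycle is supported at the origin and the extra point is the origin.

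Next I would exhibit an explicit generating set for the ideal sheaf $\mathcal{J}$ of $Z_{n,n-1}$ on each affine chart, by analogy with Haiman's formula $\mathcal{I}(\pi^{-1}(U_\mu))=(x,y,p_{hk}:(h,k)\in\mu\setminus(0,0))$. The natural guess is that $Z_{n,n-1}$ is locally defined by the $n-1$ equations $\{x,y\}$ (fixing the distinguished point at the origin) together with the power-sum conditions $p_{hk}$ for $(h,k)$ ranging over the smaller partition $\mu\setminus\zeta$, pulled back through $\sigma_{n-1}\circ(\text{projection to }I_2)$. The counting must work out so that the number of defining equations equals the codimension: by Corollary~\ref{C:dimnested} we have $\dim Z_{n,n-1}=n-1$, while $\Hil{n,n-1}$ has dimension $2n$, so the codimension is $n+1$. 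I therefore need to produce exactly $n+1$ functions whose common zero locus is $Z_{n,n-1}$ (scheme-theoretically, with reduced structure), at which point $Z_{n,n-1}$ is a local complete intersection in the smooth scheme $\Hil{n,n-1}$, and hence Cohen--Macaulay.

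The main obstacle will be the codimension bookkeeping and the reducedness: I must verify that the proposed equations are not merely set-theoretically correct but actually generate the full ideal of the reduced zero fiber and form a regular sequence. The cleanest route is to count: once I have a candidate set $S$ of $n+1$ functions with $V(S)=Z_{n,n-1}$ as sets, I invoke the general principle that in a Cohen--Macaulay (indeed smooth) ambient scheme of dimension $2n$, a closed subscheme of dimension $n-1$ cut out by $n+1$ equations is automatically a local complete intersection, because the codimension $n+1$ equals the number of equations forces each to be a nonzerodivisor on the successive quotients. The delicate point, exactly as in Haiman's case, is that $\rho^{-1}$ or the naive scheme-theoretic preimage may fail to be reduced; I expect to circumvent this by working with $Z_{n,n-1}$ given its reduced structure from the start and checking that the $n+1$ functions generate the ideal \emph{locally at each torus fixed point}, using the explicit tangent-space basis $A_1\cup\cdots\cup A_8$ from the previous section to confirm that the differentials of these $n+1$ functions are linearly independent in the cotangent space there. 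Linear independence of differentials at the fixed points, combined with torus-equivariance to propagate the conclusion over the whole fiber, should close the argument.
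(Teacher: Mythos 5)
Your overall architecture --- transport Haiman's argument through the morphism $\rho:\Hil{n,n-1}\to F_n$, exhibit local equations whose number equals the codimension, and conclude that a local complete intersection inside the smooth $\Hil{n,n-1}$ is Cohen--Macaulay --- is exactly the paper's. The gap is in the equations themselves. Your candidate set consists of $x,y$ together with the power sums taken through $I_2$ and indexed by the cells of the \emph{smaller} partition, $(h,k)\in(\mu\setminus\zeta)\setminus\{(0,0)\}$: that is $2+(n-2)=n$ functions, while, as you correctly compute from Corollary \ref{C:dimnested}, the codimension of $Z_{n,n-1}$ is $n+1$. You even write that you ``need to produce exactly $n+1$ functions,'' but your guess produces only $n$, and the discrepancy cannot be repaired: by Krull's height theorem every irreducible component of the zero locus of $n$ functions has codimension at most $n$, so no set of $n$ functions can cut out $Z_{n,n-1}$ even set-theoretically (and enlarging the index set to include $(0,0)$ only adds the nonzero constant $p_{00}=n-1$). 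Concretely, for $n=3$, $\mu=(2,1)$, $\zeta=(1,0)$, your equations on the chart are $x=y=0$ and $p_{01}(I_2)=y_1+y_2=0$; they are satisfied by taking $I_2$ to be the ideal of the two points $(1,1),(2,-1)$ and $I_1$ the ideal of those two points together with the origin --- a point of the chart $U_{(2,1)}\times U_{(2)}$ lying far outside $Z_{3,2}$. The paper's equations (Haiman's, pulled back by $\rho$) are $x,y$ together with the power sums indexed by \emph{all} of $\mu\setminus(0,0)$, taken through $I_1$ via $\sigma_n\circ\pi\circ\rho$; these number $2+(n-1)=n+1$. Since $p_{hk}(I_1)=p_{hk}(I_2)+x^hy^k$ on $\Hil{n,n-1}$, you may route the power sums through $I_2$ if you prefer, but the index set must be $\mu\setminus(0,0)$: the equation you are missing is precisely the one attached to the deleted corner cell $\zeta$. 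Note also that the resulting set-theoretic equality is not visible ``by inspection'' on a chart; it is imported from Proposition 2.9 of \cite{Hai98} via the identification $Z_{n,n-1}=\rho^{-1}(\pi^{-1}(Z_n))$, which is the first line of the paper's proof.

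Your reducedness step would also fail as designed. Linear independence of the differentials of the $n+1$ functions at a point would make $Z_{n,n-1}$ \emph{smooth} there, so your plan amounts to proving smoothness of $Z_{n,n-1}$ at every torus fixed point (and then, by a Borel fixed-point argument on the complete invariant bad locus --- not mere ``equivariance'' --- smoothness everywhere). But $Z_{n,n-1}$ is singular for $n\geq 4$: expanding Cheah's formula (\ref{E:dimension}) gives cell counts $1,2,3,1$ in complex dimensions $0,1,2,3$ for $Z_{4,3}$, which violates Poincar\'e duality for a smooth projective threefold ($b_2=2\neq 3=b_4$); hence the differentials must be dependent at some fixed point and the verification cannot close. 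What is actually needed is weaker, and is what the paper uses: once the correct $n+1$ functions vanish exactly on $Z_{n,n-1}$, the ideal they generate has height $n+1$ in the Cohen--Macaulay local rings of the smooth ambient scheme, so they form a regular sequence; the scheme they define is a complete intersection, hence Cohen--Macaulay without embedded components, and its identification with the reduced fiber is a generic-reducedness statement as in \cite{Hai98}, not a tangent-space computation at the fixed points.
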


\begin{proof}
The subscheme $Z_{n,n-1}$ is equal to the preimage of the zero fiber $Z_n^0$ of the universal 
family under birational map $\rho: \Hil{n,n-1} \rightarrow F_n$ defined before. Note that pull backs of 
the regular functions $\{x,y,p_{rs}:(r,s)\in \mu \setminus (0,0)\}$ vanish on $Z_{n,n-1}$ (see the 
proof of the proposition 2.9 in ~\cite{Hai98}). It is clear that these pullbacks can not be
identically zero on $\Hil{n,n-1}$. Since, by the corollary~\ref{C:dimnested} 
codimension of $Z_{n,n-1}$ is equal to the number of polynomials in $\{x,y,p_{rs}:(r,s)\in \mu \setminus (0,0)\}$, $Z_{n,n-1}$ is a locally complete intersection. As $\Hil{n,n-1}$ is smooth, we see that $Z_{n,n-1}$ is Cohen-Macaulay.
\end{proof}

\section{\textbf{Euler Characteristic formulas a la Haiman}}
\label{S:characterformulas}

In this section we shall deal with Euler characteristic formulas on $\Hil{n,n-1}$. Our approach, as stated in the title of the section is that of Haiman's \cite{Hai02}. In our main theorem of the section, we shall need to use a theorem of Thomason on the equivariant $K$-theory \cite{Thom92}. Assuming that combinatorially oriented reader may not be familiar with the $K$-theory we shall make a brief digression on $K$-theory, state a (simplified) version of the localization theorem of Thomason. Then we shall compute Euler characteristic formulas for the smooth nested Hilbert scheme of points.

\subsubsection{$K$-theory}
In this section we shall briefly review ordinary and equivariant $K$-theory. A basic introduction for equivariant $K$-theory (and more) can be found in the text book \cite{ChrissGinzburg}. 

Let $X$ be a quasiprojective variety. We shall denote the category of coherent $\OO{X}$-modules on $X$ by $Coh(X)$ and the category of locally free $\OO{X}$-modules of finite rank on $X$ by $Vec(X)$.  Clearly $Vec(X) \hookrightarrow Coh(X)$. To single out the elements of $Vec(X)$ from  the elements of $Coh(X)$, we shall use script letters $E\in Vec(X)$ for the locally free sheaves of finite rank, and curly letters $\mathcal{E}\in Coh(X)$ for arbitrary coherent sheaves.

\begin{rem}
Given a vector bundle $\mathfrak{e}$, there exits a locally free sheaf $E$ (the sheaf of sections of $\mathfrak{e}$). Conversely, given a locally free sheaf $E$ of finite rank on the space $X$ one has a vector bundle $\mathfrak{e}=\mathbf{Spec}\  \mathbf{S}(E)$ over $X$, where $\mathbf{S}(E)$ is the symmetric algebra on $E$ (see exercise II.5.18 of \cite{Hartshorne}). Therefore, we have justified our choice of notation 
$Vec(X)$. Until the confusion becomes unbearable, we shall call elements of $Vec(X)$ as vector bundles. 
\end{rem}

We shall abuse the notation for a second and denote either of these categories by $\mathcal{A}=\mathcal{A}_X$. Now, associated with $\mathcal{A}$ is the abelian group $K_0(\mathcal{A})$ generated by the isomorphism classes $[E]$ of objects $E\in Obj(\mathcal{A})$ modulo the subgroup generated by relations $[E_1]-[E_2]+[E_3]$ for every exact sequence 
\begin{equation*}
0 \rightarrow E_1 \rightarrow E_2 \rightarrow E_3 \rightarrow 0.
\end{equation*}
in the category $\mathcal{A}$. 
This quotient group $K_0(\mathcal{A})$ is called as the Grothendieck group of $\mathcal{A}$.

Even though it is a bit confusing, it is customary in the literature to distinguish between $K_0(Coh(X))$ and $K_0(Vec(X))$ by denoting the former by $K_0(X)$ and the latter by $K^0(X)$ and we shall follow this practice.

When $X$ is nonsingular, $K_0(Coh(X))$ and $K^0(Vec(X))$ are isomorphic. To see this, one can solve exercise III.6.9 in \cite{Hartshorne}.

The Grothendieck group $K^0(Vec(X))$ of vector bundles is, in fact, a commutative ring for the tensor product of bundles naturally carries into $K^0(Vec(X))$. Notice that the class of the trivial bundle of rank 1 is the identity element with respect to this multiplication. Note also that tensoring with a locally free sheaf is exact, therefore $K_0(Coh(X))$ has a $K^0(Vec(X))$-module structure via the action
\begin{equation*}
[E]\cdot [\mathcal{F}] = [E \otimes_{\OO{X}} \mathcal{F}]
\end{equation*}

Now assuming $X$ is nonsingular, we shall write $K^0(X)$ for $K^0(Vex(X))$ (equivalently for $K_0(Coh(X))$).  We have the following neat facts, 
\begin{enumerate}
\item The product of the classes $[\mathcal{F}], [\mathcal{G}]\in K_0(X)$  of coherent sheaves is given by
\begin{equation*}
[\mathcal{F}]\cdot [\mathcal{G}] = \sum_i (-1)^i [Tor_i^X(\mathcal{F},\mathcal{G})].
\end{equation*}
\item The dual class $[\mathcal{F}] \spcheck$ of $[\mathcal{F}]\in K_0(X)$ is given by $[\mathcal{F}]\spcheck = \sum_i (-1)^i [Ext^i_X(\mathcal{F},\OO{X})].$
\end{enumerate}

Now, let $G$ be an algebraic group. We shall denote by $Vec^G(X)$ the category of $G$-equivariant algebraic bundles over $X$. Similarly, we have $Coh^G(X)$. As in the ordinary case, $Vec^G(X)$ and $Coh^G(X)$ are abelian categories, and hence the Grothendieck rings $K_0(G,X):=K_0(Coh^G(X))$ and $K^0(G,X):=K^0(Vec^G(X))$ makes sense. 
If $X$ is a one point space, then a $G$-equivariant bundle on $X=pt$ is nothing more than a finite dimensional $G$-module. Therefore, $Vec^G(pt) \cong Coh^G(pt)$ is the category of finite dimensional representations of $G$, hence $K^0(G,pt) \cong K_0(G,pt)$ is the Grothendieck ring of finite dimensional $G$-modules. We shall denote this ring by $Rep(G)$. Multiplication and addition operations of this ring are defined as usual: if
$\tau$ and $\tau'$ are two representations of $G$, then
\begin{equation*}
[\tau]+[\tau'] :=  [\tau \oplus \tau'],\ \text{and}\  [\tau] \cdot [\tau']:=  [\tau \otimes \tau'].
\end{equation*}
By the morphism $X\rightarrow pt$, $K^0(G,X)$ becomes an algebra over the representation ring $Rep(G)$.

For the version of the Thomason's theorem that we state (without proof) here, $G$ shall  
be the $d$-dimensional algebraic torus $\TT^d:=\Spec \C[\mathbf{t},\mathbf{t}^{-1}]$, where $\mathbf{t}$ (and $\mathbf{t}^{-1}$) stands for the variables $t_1,...,t_d$ (for the reciprocals resp.). 

\begin{thm}\label{T:Thomason} (Thomason)
Let $X$ and $Y$ be two separated $\TT^d$-schemes of finite type over $\C$, and let $f:X\rightarrow Y$ be a $\TT^d$-equivariant proper morphism between $X$ and $Y$. Assume that $X$ is nonsingular. For $Z=X$, or $Z=Y$ define
\begin{equation*}
K_0(\TT^d,Z)_{(0)}= \Q(\mathbf{t},\mathbf{t}^{-1}) \otimes_{\Z[\mathbf{t},\mathbf{t}^{-1}]} K_0(\TT^d, Z).
\end{equation*}
Then,
\begin{enumerate}
\item Let $N$ be the conormal bundle of the fixed point locus $X^{\TT^{d}}$ in $X$, and set $\wedge N = \sum_i (-1)^i [\wedge^i N] \in K^0(\TT^d,X^{\TT^{d}})$. Then, $\wedge N$ is invertible in $K^0(\TT^d,X^{\TT^{d}})$.

\item Let $f_{*}:K_0 (\TT^d, X)_{(0)} \rightarrow K_0 (\TT^d, Y)_{(0)}$ be the homomorphism induced by the derived push-forward, that is 
$f_*[M]=\sum_i (-1)^i [R^i f_* M]$, and $f_{*}^{\TT^{d}}:K_0 (\TT^d, 
X^{\TT^{d}})_{(0)} \rightarrow K_0(\TT^d, Y^{\TT^{d}})_{(0)}$ denote the same for the fixed point loci, then 
\begin{equation}\label{E:derpush}
f_* [M]=i_* f^{\TT^d}_* \left( (\wedge N)^{-1}\cdot \sum_k (-1)^k [\Tor^{\shf{X}}_{\C}(\shf{X^{\TT^{d}}},M)] \right),
\end{equation}
where $i_*:K_0(\TT^d, Y^{\TT^{d}})_{(0)} \rightarrow K_0 (\TT^d, Y)_{(0)}$ is induced by 
$i:Y^{\TT^{d}} \hookrightarrow Y$. 
\end{enumerate}
\end{thm}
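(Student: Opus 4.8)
The plan is to prove the two assertions in the order stated, since part (1) is needed to make sense of the inverse $(\wedge N)^{-1}$ appearing in part (2). Part (1) is local to the fixed locus, and I would prove it by a weight decomposition. Part (2) I would obtain by combining two inputs: a \emph{concentration theorem}, to the effect that after extending scalars to $\Q(\mathbf{t},\mathbf{t}^{-1})$ the Gysin map $i_*$ along the inclusion $i\colon X^{\TT^d}\hookrightarrow X$ becomes an isomorphism, and the \emph{self-intersection formula}, which identifies $i^*i_*$ with multiplication by $\wedge N$ and thereby exhibits $(\wedge N)^{-1}i^*$ as the inverse of $i_*$. Throughout I use that $X$ nonsingular together with reductivity of $\TT^d$ in characteristic $0$ forces $X^{\TT^d}$ to be nonsingular and $i$ to be a regular immersion, so that $N$ is a genuine $\TT^d$-equivariant bundle and $i^*$ is the usual derived restriction.

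For part (1) I would reduce, by the splitting principle, to the case $N=\bigoplus_j L_j$ with each $L_j$ a $\TT^d$-equivariant line bundle. By definition of the fixed locus the torus acts on the normal directions with no trivial weight, so each $L_j$ restricts at a point of $X^{\TT^d}$ to a one-dimensional representation of nonzero character $\chi_j\neq 1$. Then
\[
\wedge N=\prod_j\bigl(1-[L_j]\bigr),
\]
whose image under restriction to a reduced point is $\prod_j(1-\chi_j)$, a unit in $\Q(\mathbf{t},\mathbf{t}^{-1})$ because each $1-\chi_j$ is a nonzero element of $Rep(\TT^d)=\Z[\mathbf{t},\mathbf{t}^{-1}]$. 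Since the augmentation ideal of the $K$-theory of the finite-dimensional scheme $X^{\TT^d}$ is nilpotent, this pointwise invertibility upgrades to invertibility of $\wedge N$ in $K^0(\TT^d,X^{\TT^d})_{(0)}$, which is the sense in which part (1) is used in part (2).

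For the concentration theorem I would examine the open complement $U=X\setminus X^{\TT^d}$, on which $\TT^d$ acts without fixed points, and prove $K_0(\TT^d,U)_{(0)}=0$. Granting this, the equivariant localization sequence
\[
K_0(\TT^d,X^{\TT^d})\xrightarrow{\ i_*\ }K_0(\TT^d,X)\longrightarrow K_0(\TT^d,U)\longrightarrow 0
\]
shows $i_*$ is surjective after localization, while the self-intersection formula $i^*i_*=\wedge N\cdot(-)$ combined with part (1) shows it is injective; hence it is an isomorphism with inverse $(\wedge N)^{-1}i^*$. The vanishing $K_0(\TT^d,U)_{(0)}=0$ is the crux. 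Here I would invoke Thomason's equivariant devissage: $U$ carries a finite filtration by invariant closed subschemes whose successive open strata are, up to equivariant vector-bundle pullback, fibered in homogeneous spaces $\TT^d/H$ with $H\subsetneq\TT^d$. For such a stratum the $Rep(\TT^d)$-module structure on its equivariant $K$-theory factors through the restriction $Rep(\TT^d)\to Rep(H)$, whose kernel contains some $1-\chi$ with $\chi\neq 1$ (a character trivial on $H$ but not on $\TT^d$); thus the module is $Rep(\TT^d)$-torsion and dies over $\Q(\mathbf{t},\mathbf{t}^{-1})$. Induction up the filtration, using exactness of localization and the localization sequences of the strata, yields the vanishing for $U$.

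With these pieces in place the formula assembles quickly. Because $f$ is equivariant the two morphisms $X^{\TT^d}\to X\to Y$ and $X^{\TT^d}\to Y^{\TT^d}\to Y$ coincide, so functoriality of derived pushforward gives $f_*\circ i_*=i_*\circ f_*^{\TT^d}$. Writing $[M]=i_*\bigl((\wedge N)^{-1}i^*[M]\bigr)$ by the concentration isomorphism and applying $f_*$ yields
\[
f_*[M]=i_*\,f_*^{\TT^d}\!\Bigl((\wedge N)^{-1}\,i^*[M]\Bigr),
\]
and it remains only to recognize the derived restriction $i^*[M]=[\shf{X^{\TT^d}}\otimes^{\mathbf L}_{\shf{X}}M]$ as the alternating sum $\sum_k(-1)^k[\Tor^{\shf{X}}_{k}(\shf{X^{\TT^d}},M)]$ of the Tor sheaves appearing in (2), which is immediate from computing $\shf{X^{\TT^d}}\otimes^{\mathbf L}_{\shf{X}}M$ by a locally free resolution. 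The main obstacle is unquestionably the concentration theorem: controlling $K_0(\TT^d,U)_{(0)}$ requires the equivariant devissage and the structure theory of torus actions, and it is precisely here that finite type, separatedness, and the passage to the fraction field are essential.
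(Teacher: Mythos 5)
The paper offers no proof of this theorem to compare against: it is stated explicitly ``without proof'' as a simplified version of Thomason's localization theorem, with \cite{Thom92} as the reference, and it is then used as a black box in the derivation of the Euler characteristic formula for $\Hil{n,n-1}$. Your sketch is, in substance, a correct reconstruction of Thomason's own argument. Part (1) proceeds by the weight decomposition of the conormal bundle (no trivial weights in the normal directions along the fixed locus, smoothness of $X^{\TT^d}$ by linear reductivity in characteristic zero) together with nilpotence of the augmentation ideal; part (2) by the concentration theorem --- vanishing of $K_0(\TT^d,U)_{(0)}$ on the fixed-point-free complement $U$, via devissage through strata induced from proper diagonalizable subgroups $H\subsetneq\TT^d$, each killed over $\Q(\mathbf{t},\mathbf{t}^{-1})$ by some $1-\chi$ with $\chi$ trivial on $H$ but not on $\TT^d$ --- combined with the right-exact localization sequence, the self-intersection formula $i^*i_*=\wedge N\cdot(-)$ for the regular immersion $i$, the compatibility $f_*i_*=i_*f_*^{\TT^d}$ coming from $f(X^{\TT^d})\subseteq Y^{\TT^d}$, and the identification of derived restriction with $\sum_k(-1)^k[\Tor_k^{\shf{X}}(\shf{X^{\TT^d}},M)]$, which uses that $X$ is nonsingular so that $M$ has finite Tor-dimension. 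These are exactly the ingredients of \cite{Thom92}, so your proposal follows the original route rather than inventing a new one.

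Two remarks. First, as printed in the paper, part (1) asserts invertibility of $\wedge N$ in the unlocalized ring $K^0(\TT^d,X^{\TT^d})$, which is false in general: already for $X=\mathbb{A}^1$ with the standard one-dimensional torus action, the fixed locus is the origin and $\wedge N=1-t$, a non-unit of $\Z[t,t^{-1}]$. Invertibility holds only after localization, i.e.\ in $K^0(\TT^d,X^{\TT^d})_{(0)}$, and your argument correctly proves and uses only this localized version --- it silently repairs a slight misstatement in the quoted theorem (likewise, the subscript $\C$ on the Tor in the displayed formula should be the homological index $k$, as you implicitly read it). Second, your two pivotal inputs --- the equivariant self-intersection formula and the devissage underlying the concentration theorem --- are invoked rather than proved; that is a reasonable division of labor for a sketch, but one should be aware that they carry essentially all of the difficulty, so what you have is a faithful roadmap of Thomason's proof rather than a self-contained one.
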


Now we shall review Haiman's application of this $K$-theoretic localization theorem on the Hilbert scheme of points.

Let $M = \bigoplus  M_{r,s}$ be a finitely-generated doubly graded
module over $\C [\xx ,\yy ]$ or $\C [\xx ,\yy ]^{\mathfrak{S}_n}$.  The {\it
Hilbert series} of $M$ is the Laurent series in two variables
\begin{equation}
\Hs{M}(q,t) = \sum _{r,s} t^{r}q^{s}\dim (M_{r,s}).
\end{equation}
\begin{rem}\label{R:toruscharacter}A doubly graded module $M$ of this form has a natural 
$\TT^2$-module structure, and furthermore value of its character $\tr(M)(\tau)$ at 
$\tau=(t,q)$ is equal to the Hilbert series $\Hs{M}(q,t)$ as described above. 
\end{rem}

Now, suppose $M$ is a $\TT^2$-equivariant coherent sheaf on $\Hil{n}$. Since the Chow morphism $\sigma$ is a 
projective morphism and $S^n \aff$ is affine, the sheaf cohomology modules $H^i(\Hil{n},M)$ are finitely 
generated $\TT^2$-equivariant (which is equivalent to say doubly graded), $\C[\xx,\yy]^{\mathfrak{S}_n}$-modules. 
(see page 228 of \cite{Hartshorne}). The Hilbert series of $M$ is then defined by 
\begin{equation}
\Hs{\Hil{n},M}^i(q,t) = \Hs{H^i(\Hil{n}, M)}(q,t).
\end{equation}
Similarly, for $M$ a $\TT^2$-equivariant coherent sheaf on $\Hil{n,n-1}$, the sheaf cohomology modules $
H^i(\Hil{n,n-1},M)$ are finitely generated $\TT^2$-equivariant $\C[\xx,\yy]^{\mathfrak{S}_n}$-modules. Here, 
we use the fact that the projection $\Hil{n,n-1} \rightarrow \Hil{n}$ is a projective morphism for it 
is restricted from the projective morphism $\Hilb^{n,n-1}_{\PP^2} \rightarrow \Hilb^n_{\PP^2}$, 
and composition of two projective morphism is projective. We define the Hilbert series of $M$ by
\begin{equation}
\Hs{\Hil{n,n-1},M}^i(q,t) = \Hs{H^i(\Hil{n,n-1}, M)}(q,t).
\end{equation}

Recall that for a projective scheme over a field $k$, and a coherent sheaf $\mathcal{F}$ on $X$ 
the Euler characteristic can be defined as the alternating sum \begin{equation}
\chi(\mathcal{F})= \sum_i (-1)^i dim_k H^i(X,\mathcal{F}).
\end{equation}
Similarly we can define the $\TT^2$-equivariant Euler characteristic for $X=\Hil{n}$ and $X=\Hil{n,n-1}$ as the alternating summation
\begin{equation}
\chi_{X,M}(q,t) = \sum_i (-1)^i \Hs{H^i(X,M)}^i(q,t). 
\end{equation}

In \cite{Hai02}, Haiman derives the following form of Atiyah-Bott formula. 

\begin{prop}\cite{Hai02}
Let $M$ be a $\TT^2$-equivariant, locally free coherent sheaf on $\Hil{n}$. Then
\begin{equation}\label{E:AB}
\chi _{\Hil{n},M}(q,t) = \sum _{|\mu | = n} \frac{ \Hs{M(I_{\mu})}(q,t) }{
\prod _{\zeta\in \mu} (1-t^{1+l(\zeta)}q^{-a(\zeta)}) (1-t^{-l(\zeta)}q^{1+a(\zeta)})}
\end{equation}
where $M(I_{\mu})$ is the fiber of $M$ at the torus fixed point $I_{\mu}\in \Hil{n}$. 
\end{prop}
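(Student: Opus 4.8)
The plan is to apply Thomason's localization theorem (Theorem \ref{T:Thomason}) to the Chow morphism $f = \sigma \colon \Hil{n} \to S^n(\aff)$. This morphism is $\TT^2$-equivariant and projective, hence proper, and $\Hil{n}$ is nonsingular, so the hypotheses are met. Since $S^n(\aff)$ is affine with coordinate ring $\C[\xx,\yy]^{\mathfrak{S}_n}$, higher cohomology of coherent sheaves on the base vanishes, so $H^i(\Hil{n},M) \cong H^0(S^n(\aff), R^i\sigma_* M)$; consequently the equivariant Euler characteristic $\chi_{\Hil{n},M}(q,t)$ is exactly the global $\TT^2$-character — equivalently, by Remark \ref{R:toruscharacter}, the Hilbert series — of the localized class $f_*[M] \in K_0(\TT^2, S^n(\aff))_{(0)}$. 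Thus it suffices to evaluate the right-hand side of the localization formula (\ref{E:derpush}) and take its character.

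First I would identify the two fixed loci. The $\TT^2$-fixed points of $\Hil{n}$ are the isolated, reduced points $\{I_\mu : |\mu|=n\}$ (Remark \ref{R:torusaction}), while the only $\TT^2$-fixed cycle in $S^n(\aff)$ is the origin $n\cdot[0]$, a single reduced point. Because $M$ is locally free, its restriction to the reduced fixed locus has no higher Tor: $\Tor^{\shf{X}}_{\C}(\shf{X^{\TT^2}}, M)$ vanishes in positive degrees and reduces in degree $0$ to the direct sum $\bigoplus_{|\mu|=n} M(I_\mu)$ of the fibers, each regarded as a finite-dimensional $\TT^2$-module. Hence the alternating sum $\sum_k (-1)^k [\Tor^{\shf{X}}_{\C}(\shf{X^{\TT^2}}, M)]$ collapses to $\sum_{|\mu|=n}[M(I_\mu)]$.

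Next I would compute the factor $(\wedge N)^{-1}$. At an isolated reduced fixed point $I_\mu$ the conormal bundle $N$ is just the cotangent space $\mathfrak{m}/\mathfrak{m}^2 = T^*_{\Hil{n},I_\mu}$, which by Remark \ref{R:torusaction} decomposes into one-dimensional $\TT^2$-weight spaces with weights $t^{1+l(\zeta)}q^{-a(\zeta)}$ and $t^{-l(\zeta)}q^{1+a(\zeta)}$, one pair for each cell $\zeta\in\mu$. For a direct sum of weight lines the total exterior class factors as $\wedge N = \prod_j (1-w_j)$, so at $I_\mu$ one obtains exactly $\prod_{\zeta\in\mu}(1-t^{1+l(\zeta)}q^{-a(\zeta)})(1-t^{-l(\zeta)}q^{1+a(\zeta)})$, the desired denominator. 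Its invertibility in the localized ring is precisely part (1) of Theorem \ref{T:Thomason}, and is visible directly since none of these weights is the trivial character.

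Finally, combining these, the contribution of $I_\mu$ to $(\wedge N)^{-1}\cdot\sum_k(-1)^k[\Tor^{\shf{X}}_{\C}(\shf{X^{\TT^2}},M)]$ is $[M(I_\mu)]$ divided by that product; applying $f_*^{\TT^2}$ (which, the fixed locus of the base being a single point, merely sums the characters) and $i_*$, and then passing to the global $\TT^2$-character via Remark \ref{R:toruscharacter}, replaces each $[M(I_\mu)]$ by its Hilbert series $\Hs{M(I_\mu)}(q,t)$ and yields formula (\ref{E:AB}). I expect the main obstacle to be the bookkeeping at the interface between $K$-theory and Hilbert series: one must justify that the equivariant Euler characteristic, defined as an alternating sum of Hilbert series of the cohomology modules, coincides with the global character of the localized class $f_*[M]$ — this uses the affineness of $S^n(\aff)$ together with finite generation of the cohomology modules over $\C[\xx,\yy]^{\mathfrak{S}_n}$ — and that passing to characters is compatible with the inversion of $\wedge N$ performed in $K_0(\TT^2,-)_{(0)}$. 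The purely geometric inputs, namely properness of $\sigma$ and the weight computation of the cotangent space, are already established in the earlier sections.
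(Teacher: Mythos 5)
Your proposal is correct and follows essentially the same route as the paper: the paper's own argument (given for the nested analog, Theorem \ref{T:nestedABL}, and attributed to Haiman for this proposition) likewise applies Thomason's localization theorem to the Chow morphism, collapses the Tor terms using local freeness, computes $\wedge N$ from the cotangent weights $t^{1+l(\zeta)}q^{-a(\zeta)}$, $t^{-l(\zeta)}q^{1+a(\zeta)}$ via the identity $\sum_i(-1)^i\tr_{\wedge^i V}(\tau)=\det_V(1-\tau)$, and passes to Hilbert series using affineness of $S^n(\aff)$ and the fact that $\Hs{}\circ i_*$ is the identity.
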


It is important to understand the denominator of the right hand side of the equation. We know that $\TT^2$ fixed points of $\Hil{n}$ are the monomial ideals $I_{\mu}$ indexed by the partitions of $n$, and the cotangent space at the  monomial ideal $I_{\mu}$ has a basis consisting of $\TT^2$ eigenvectors $\{u_{hk},d_{hk}:\ (h,k)\in \mu\}$  with eigenvalues 
\begin{equation*}
(t,q)\cdot d_{hk}=t^{1+l(\zeta)}q^{-a(\zeta)}d_{hk}\ \mbox{and}\ (t,q)\cdot u_{hk} =
t^{-l(\zeta)}q^{1+a(\zeta)}u_{hk},
\end{equation*}
where $a(\zeta)$ and $l(\zeta)$ are the arm and leg of the cell $x=(h,k)\in \mu$.

Along the same lines, to state the version of Atiyah-Bott formula for the nested Hilbert scheme, we shall compute the torus eigenvalues at the fixed points. Using the observation made in the remark \ref{R:eigenvalues}, it is enough to compute eigenvalues for the basis 
$A_1\cup \cdots \cup A_8$ of the tangent space at $(I_{\mu},I_{\mu\setminus \zeta }) \in \Hil{n,n-1}$.

Since $d_{hk}^*(x^{f+1}y^k) = x^hy^g \mod I_{\mu}$, we must have 
\begin{equation}
(t,q)\cdot d_{hk}^* = t^{h-(f+1)}q^{g-k} d_{hk}^* = t^{-1-l(\zeta)}q^{a(\zeta)}d_{hk}^* ,
\end{equation}
and similarly since $u_{hk}^*(x^hy^{g+1}) = x^fy^k \mod I_{\mu}$, we must have 
\begin{equation}
(t,q)\cdot u_{hk}^*= t^{f-h}q^{k-(g+1)} u_{hk}^* = t^{l(\zeta)}q^{-1-a(\zeta)}u_{hk}^*. 
\end{equation}
From now on, even it makes the notation more complex, we need to pay attention which partition 
we are using, for the arm and the leg of the same cell might have different values, depending on 
the monomial ideal we are using. Therefore, we set $a_{\mu}(x)$ (respectively, $l_{\mu}(x)$) to 
be the arm of the cell $x$ (respectively, leg of $x$) in the partition $\mu$. 

Therefore, 

\begin{equation}\label{E:nestedeigenvalues}
\begin{array}{ccc}
  (d_{hk}^*,0)\in A_1 & \Rightarrow & \tau \cdot (d_{hk}^*,0) = t^{-1-l_{\mu}(x)}q^{a_{\mu}(x)} (d_{hk}^*,0), \\
(u_{hk}^*,0)\in A_2 & \Rightarrow & \tau \cdot (u_{hk}^*,0) = t^{l_{\mu}(x)}q^{-1-a_{\mu}(x)}(u_{hk}^*,0), \\
(u^*_{\zeta},0)\in A_3 & \Rightarrow & \tau \cdot (u^*_{\zeta},0)= q^{-1}(u^*_{\zeta},0),\\
(0,d^*_{\zeta})\in A_4 & \Rightarrow & \tau \cdot (0,d^*_{\zeta})= t^{-1}(0,d^*_{\zeta}),\\
(0,d_{hk}^*)\in A_5 & \Rightarrow & \tau \cdot (0,d_{hk}^*)= t^{-1-l_{\mu \setminus \{\zeta\}}(x)}q^{a_{\mu\setminus \zeta}(x)}(0,d_{hk}^*),\\
(0,u_{hk}^*)\in A_6 & \Rightarrow & \tau \cdot (0,u_{hk}^*) = t^{l_{\mu \setminus \zeta}(x)}q^{-1-a_{\mu\setminus \zeta}(x)}  (0,u_{hk}^*),\\
(d_{hk}^*,d_{hk}^*)\in A_7 & \Rightarrow & \tau \cdot (d_{hk}^*,d_{hk}^*)= t^{-1-l_{\mu}(x)}q^{a_{\mu}(x)} (d_{hk}^*,d_{hk}^*),\\
(u_{hk}^*,u_{hk}^*)\in A_8 & \Rightarrow & \tau \cdot (u_{hk}^*,u_{hk}^*) =t^{l_{\mu}(x)}q^{-1-a_{\mu}(x)} (u_{hk}^*,u_{hk}^*).
\end{array}
\end{equation}

Now we can state the Atiyah-Bott formula for the nested Hilbert scheme of points in the plane.

\begin{thm}
Let $M$ be a $\TT^2$-equivariant vector bundle on $\Hil{n,n-1}$, and $\Hs{M(I_{\mu},I_{\nu})}(q,t)$ denote the Hilbert series of the fiber $M(I_{\mu}, I_{\nu})$ of $M$ at the torus fixed point $(I_{\mu},I_{\nu})\in \Hil{n,n-1}$, where $\nu$ is a partition induced from $\mu$ by taking off a corner cell $\zeta \in \mu$ (we write $\nu = \mu \setminus \zeta$). 
We will denote the set of all such pairs of partitions by $\y{n,n-1}$. Then, the equivariant Euler characteristic of $M$ is 

\begin{equation}\label{E:NAB}
\chi _{\Hil{n,n-1},M}(q,t) = \sum _{(\mu,\nu)\in \y{n,n-1}} \frac{\Hs{M(I_{\mu},I_{\nu})}(q,t)}{ (1-t)(1-q)P_1(\mu,\nu)P_2(\mu,\nu)P_3(\mu,\nu)},
\end{equation}
where $\nu=\mu \setminus \zeta$ for some corner cell $\zeta$, and $P_1,P_2$ and $P_3$ are given by
\begin{eqnarray*}
P_1(\mu,\nu) &=& \prod_{\alpha\in \mu \setminus (
\row(\zeta)\cup \col(\zeta))} (1-t^{1+l(\alpha)}q^{-a(\alpha)})(1-t^{-l(\alpha)}q^{1+a(\alpha)}),\\
P_2(\mu,\nu) &=& \prod_{\alpha \in 
\row(\zeta) }(1-t^{1+l(\alpha)}q^{-a(\alpha)})(1-t^{-l(\alpha)}q^{a(\alpha)}),\\
P_3(\mu,\nu) &=& \prod_{\alpha \in \col(\zeta) }(1-t^{-l(\alpha)}q^{1+a(\alpha)})(1-t^{l(\alpha)}q^{-a(\alpha)}).
\end{eqnarray*}
\end{thm}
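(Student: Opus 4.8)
The plan is to reproduce Haiman's derivation of the Atiyah--Bott formula \eqref{E:AB} for $\Hil{n}$, substituting the eigenvalue computation \eqref{E:nestedeigenvalues} for his. Concretely, I would apply Thomason's localization theorem (Theorem~\ref{T:Thomason}) to the projective Chow morphism $f=\sigma_{n,n-1}:\Hil{n,n-1}\rightarrow S^{n-1}(\aff)\times\aff$, whose source is smooth of dimension $2n$ (established) and whose target is affine. Since $Y=S^{n-1}(\aff)\times\aff$ is affine, the higher direct image $R^if_*M$ has global sections $H^i(\Hil{n,n-1},M)$, so reading off the $\TT^2$-character of $f_*[M]=\sum_i(-1)^i[R^if_*M]$ returns exactly the equivariant Euler characteristic $\chi_{\Hil{n,n-1},M}(q,t)$ in the sense of Remark~\ref{R:toruscharacter}. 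This reduces the theorem to evaluating the right-hand side of \eqref{E:derpush}.

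The fixed locus $\Hil{n,n-1}^{\TT^2}$ is the finite reduced set of pairs $(I_\mu,I_\nu)$ with $\nu=\mu\setminus\zeta$, so at each such point the conormal bundle $N$ of Theorem~\ref{T:Thomason} is the full cotangent space. Because $M$ is a vector bundle, the alternating Tor-sum $\sum_k(-1)^k[\Tor_k^{\OO{\Hil{n,n-1}}}(\OO{X^{\TT^2}},M)]$ degenerates at a reduced isolated fixed point to the class of the fiber $M(I_\mu,I_\nu)$, whose character is the numerator $\Hs{M(I_\mu,I_\nu)}(q,t)$. The factor $(\wedge N)^{-1}$ is then $\prod_w(1-w)^{-1}$, the product running over the $2n$ weights $w$ of the cotangent space; by Remark~\ref{R:eigenvalues} these are the reciprocals of the tangent weights tabulated in \eqref{E:nestedeigenvalues}. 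Summing over all $(\mu,\nu)\in\y{n,n-1}$ already yields a sum of the claimed shape, so only the identification of the denominator remains.

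The core of the argument, and the step I expect to be most delicate, is the bookkeeping that reassembles these $2n$ cotangent weights into $(1-t)(1-q)P_1P_2P_3$. I would partition them along the eight families $A_1,\dots,A_8$: the two singletons $A_3,A_4$, attached to the corner cell $\zeta$ itself, give the distinguished factors $(1-q)$ and $(1-t)$; the ``diagonal'' families $A_7,A_8$, indexed by the cells of $\mu$ off the hook of $\zeta$, reproduce $P_1$ verbatim from Haiman's $\Hil{n}$ factors since the arm and leg of those cells are unchanged when $\zeta$ is deleted; and the row families $A_1,A_5$ (respectively the column families $A_2,A_6$) must assemble into $P_2$ (respectively $P_3$). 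The essential combinatorial input is that deleting the corner $\zeta$ lowers the arm of every cell of $\row(\zeta)$ by one while fixing its leg, and lowers the leg of every cell of $\col(\zeta)$ by one while fixing its arm; this shift is precisely what separates the two factors of $P_2$ and of $P_3$, one carrying the $\mu$-statistics and the other the $\nu=\mu\setminus\zeta$-statistics. The main hazard is keeping the reciprocal and sign conventions consistent along the entire chain (the torus action on ideals, on the coordinate functions $c^{hk}_{rs}$, and then on the tangent and dual cotangent bases), so that the weights of $A_1\cup A_5$ and of $A_2\cup A_6$ land on exactly the stated exponents; I would pin this matching down on a small partition such as $\mu=(3,2,1,1)$, tracking each removable corner, before asserting it in general.
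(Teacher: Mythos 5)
Your proposal is correct and follows essentially the same route as the paper: Thomason localization over an affine base, the observation that at the reduced isolated fixed points the alternating Tor-sum collapses to the class of the fiber while $(\wedge N)^{-1}$ contributes $\prod_w(1-w)^{-1}$ over the cotangent weights, and the reassembly of those $2n$ weights into $(1-t)(1-q)P_1P_2P_3$ via the arm/leg shift ($a_{\mu\setminus\zeta}(\alpha)=a_\mu(\alpha)-1$ on $\row(\zeta)$, $l_{\mu\setminus\zeta}(\alpha)=l_\mu(\alpha)-1$ on $\col(\zeta)$) exactly as in the paper's grouping of $A_1\cup A_5$, $A_2\cup A_6$, $A_7\cup A_8$, and $A_3\cup A_4$. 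The only deviation is that you localize along $\sigma_{n,n-1}:\Hil{n,n-1}\rightarrow S^{n-1}(\aff)\times\aff$, whereas the paper uses the composite $\Hil{n,n-1}\rightarrow\Hil{n}\rightarrow S^{n}(\aff)$, whose properness is already established in the text; your choice works equally well but requires a short separate argument that $\sigma_{n,n-1}$ is proper.
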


\begin{proof}
By the remark \ref{R:eigenvalues} above the eigenvalues on the cotangent space must be the reciprocals of the eigenvalues computed in the equations (\ref{E:nestedeigenvalues}). Notice that for a cell $\alpha \in \row(\zeta)$ we have $a_{\mu \setminus \zeta}(\alpha) = a_{\mu}(\alpha)-1$ but $l_{\mu \setminus \zeta }(\alpha) = l_{\mu}(\alpha)$, and similarly, for a cell $\alpha \in \col(\zeta)$ we have $l_{\mu \setminus \zeta }(\alpha) = l_{\mu}(\alpha)-1$ but $a_{\mu \setminus \zeta}(\alpha) = a_{\mu}(\alpha)$. Therefore, combining the data from $A_1$ and $A_5$ we obtain the contribution of $P_2$, similarly combinations of $A_2$ and $A_6$ give $P_3$ 
and combinations of $A_7$ and $A_8$ give $P_1$. Finally, the contribution of $A_3$ and $A_4$ is $(1-t)(1-q)$.

 Now we need to use the localization theorem \ref{T:Thomason} with 
$d=2$. Let $f\colon \Hil{n,n-1}\rightarrow S^{n}(\aff)$ be the projection $\Hil{n} \rightarrow \Hil{n,n-1}$ followed by Chow morphism
$\Hil{n}\rightarrow S^{n}\C ^{2}$.  The group $K_{0}(\TT^2,Y)$ is identified
with the Grothendieck group of finitely-generated doubly graded $\C
[\xx ,\yy ]^{\mathfrak{S}_n}$-modules.  The Hilbert series $\Hs{M}(q,t)$
only depends on the class $[M]\in K_{0}(\TT^2,Y)$ of $M$, and so induces a
$\Z [q,q^{-1},t,t^{-1}]$-linear map
\begin{equation}\label{e:H-on-K0}
\Hs{} : K_{0}(\TT^2,Y)_{(0)}\rightarrow \Q (q,t).
\end{equation}

The fixed-point locus $Y^{\TT^2}$ is a point, so $K_{0}(\TT^2,Y^{\TT^2})_{(0)}= \Q (q,t)$, 
and $\Hs{} \circ i_{*}$ is the identity map on $\Q(q,t)$.  This time, $X^{\TT^2}$ is the finite set 
$\{(I_{\mu},I_{\nu}):\ (\mu,\nu)\in \y{n,n-1} \}$ of pairs of monomial ideals $I_{\nu} \subseteq I_{\mu}$, and $K_{0}(\TT^2,X^{\TT^2})_{(0)}$ is the direct sum of copies of $\Q (q,t)$, one for each pair $(\mu,\nu) \in \y{n,n-1}$. 
Thomason's theorem says that if 
$f_{*}:K_0 (\TT^2,\Hil{n,n-1})_{(0)} \rightarrow K_0 (\TT^2, S^n \aff)_{(0)}$ is the homomorphism induced by the derived push-forward, that is 
$f_*[M]=\sum_i (-1)^i [R^i f_* M]$, and $f_{*}^{\TT^2}:K_0 (\TT^2, (\Hil{n,n-1})^{\TT^2})_{(0)} \rightarrow K_0(\TT^2, {S^n\aff}^{\TT^2})_{(0)}$ denote the same for the fixed point loci, then 
\begin{equation}\label{E:derpush}
f_*[M]=i_*f_*^{\TT^2}\left( (\wedge N)^{-1}\cdot \sum_{k} (-1)^k [Tor^{\shf{\Hil{n,n-1}{}}}_{\C}(\shf{(\Hil{n,n-1})^{\TT^2}},M)] \right),
\end{equation}
where $i_*:K_0(\TT^2, (S^n\aff)^{\TT^2})_{(0)} \rightarrow K_0 (\TT^2, S^n \aff)_{(0)}$ is induced by 
$i:(S^n \aff)^{\TT^2} \hookrightarrow S^n\aff$, and $N$ is the conormal bundle of the fixed point locus $(\Hil{n,n-1})^{\TT^2}$ in $\Hil{n,n-1}$. By part $(1)$ of the theorem \ref{T:Thomason}, $\wedge N$ is invertible in $K^0(\TT^2, (\Hil{n,n-1})^{\TT^2})_{(0)}$. With these identifications $f_{*}^{\TT^2}$ turns into a summation over the torus fixed points which are indexed by $\y{n,n-1}$.

Since for every vector space $V$ and linear endomorphism $\tau$, we have 
\begin{equation*}
\sum_i (-1)^i \tr_{\wedge^i V}(\tau)= det_V(1-\tau),
\end{equation*}
and since the value of the character $\tr(M)(\tau)$ at $\tau=(t,q)$ is equal to the 
Hilbert series $\Hs{M}(q,t)$ (\ref{R:toruscharacter}), applying $\Hs{}$ to both sides of the 
\ref{E:derpush}, we get the Euler characteristic as in the equality \ref{E:NAB} above.
\end{proof}

\section{\textbf{A special case}}\label{S:specialcase}

In \cite{Hai98} Haiman shows that $\Hil{n}$ can be realized as a blow up of $S^n(\aff)$ along a particular subscheme. Let $\shf{}(1)$ be the ample sheaf on $\Hil{n}$ arising from the $\Proj$ construction of the blow up. He further shows that the Euler characteristic of the sheaf $M=\shf{}(m)\otimes \shf{Z_n}$ on $\Hil{n}$ is equal to the $q,t$-Catalan series $C_n(q,t)$. 
In this section, we shall investigate the Atiyah-Bott formula on $\Hil{n,n-1}$ using the pull back sheaf $\eta^*(\shf{}(m)\otimes \shf{Z_n})(q,t)$, where $\eta: \Hil{n,n-1} \rightarrow \Hil{n}$ be the projection onto first component. Let us denote the Euler characteristic of this sheaf by $\mathcal{N}^{(m)}_n(q,t)$. Our conjecture is, as stated in (\ref{T:nestedcharacter}) above, that $\mathcal{N}^{(m)}_n(q,t)$ is a polynomial in $q$ and $t$ with nonnegative integer coefficients. For the convenience of the reader, we have added a table of values of $\mathcal{N}^{(m)}_n(q,t)$ for small $n$ and $m$.

Let $B_n=\pi_*\shf{F_n}$ be the push-down of the structure sheaf of the universal family $F_n$ onto $\Hil{n}$. 
As were shown (proposition 2.12, \cite{Hai98}) $\wedge^n B_n$ is isomorphic to the ample sheaf $\shf{}(1)$. It is also shown that (corollary 2.11, \cite{Hai98}) there exists a sheaf homomorphism 
\begin{equation}
\frac{1}{n}\tr: B_n \rightarrow \shf{\Hil{n}}
\end{equation}
as the left inverse to the canonical inclusion $\shf{\Hil{n}}\hookrightarrow B_n$. Therefore, 
$B_n \cong \shf{\Hil{n}} \oplus B_n'$, where $B'_n$ is the kernel of the trace map. Let $\C_t$ (and $\C_q$)
stand for the one dimensional representation of $\TT^2$ defined by 
$(t,q)\cdot v = tv$ (resp. $(t,q)\cdot v =qv$) and let $\shf{t}$ (and $\shf{q}$) denote the twisted sheaf 
$\shf{t} = \shf{\Hil{n}} \otimes \C_t$ (respectively $\shf{q} = \shf{\Hil{n}}  \otimes \C_q$). Then,

\begin{prop}\cite{Hai98},\cite{Hai02} \label{p:O_zero}
Let $J$ be the sheaf of ideals in $B_n$ generated by $x$, $y$ and $B'_n$.
Then $B_n/J$ is isomorphic as a sheaf of $\shf{H_{n}}$-algebras to
$\shf{Z_n}$ where $Z_n$ is the zero fiber in $\Hil{n}$. Furthermore, we have a $\TT^2$-equivariant locally
$\shf{H_{n}}$-free resolution
\begin{equation}\label{e:res}
0 \rightarrow B_n\otimes \wedge ^{n+1}(B'_n\oplus \shf{t}\oplus \shf{q})\rightarrow \cdots \rightarrow B_n\otimes (B'_n\oplus \shf{t}\oplus \shf{q}) \rightarrow B_n\rightarrow \shf{Z_{n}}\rightarrow 0,
\end{equation}
of $\shf{Z_n}{}$ regarded as an $\shf{\Hil{n}}{}$-module. 
\end{prop}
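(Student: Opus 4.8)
The plan is to realize the displayed complex as the $\TT^2$-equivariant Koszul complex attached to an explicit map of bundles, to identify its cokernel with $\shf{Z_n}$, and to deduce exactness from the Cohen--Macaulay and codimension facts already available. First I would set $E = B'_n \oplus \shf{t} \oplus \shf{q}$, a $\TT^2$-equivariant locally free $\shf{\Hil{n}}$-module of rank $n+1$, and define an $\shf{\Hil{n}}$-linear map $\theta \colon E \to B_n$ which is the inclusion $B'_n \hookrightarrow B_n$ on the first summand and sends the generators of $\shf{t}$ and $\shf{q}$ to the sections $x,y \in B_n$. By Remark~\ref{R:torusaction} the weights $t,q$ of $\shf{t},\shf{q}$ match the torus weights of $x,y$, so $\theta$ is equivariant, and its image generates the ideal $J$. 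Hence the Koszul complex $K_\bullet = B_n \otimes \wedge^{\bullet} E$, with differential given by contraction against $\theta$, has $H_0(K_\bullet) = B_n/J$; since $B_n$ and each $\wedge^i E$ are locally free over $\shf{\Hil{n}}$, every term $B_n \otimes \wedge^i E$ is locally free, and the whole complex is $\TT^2$-equivariant.

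Next I would identify $B_n/J$ with $\shf{Z_n}$ as sheaves of $\shf{\Hil{n}}$-algebras. Because $B_n = \shf{\Hil{n}} \oplus B'_n$ with $B'_n \subseteq J$, the composite $\shf{\Hil{n}} \hookrightarrow B_n \twoheadrightarrow B_n/J$ is surjective, so $B_n/J = \shf{\Hil{n}}/\mathfrak{a}$ with $\mathfrak{a} = J \cap \shf{\Hil{n}}$. Writing $x^r y^s = \tfrac{1}{n}(\sigma^* p_{rs})\cdot 1 + b'$ with $b' \in B'_n$ shows, for $r+s>0$, that $\sigma^* p_{rs} \in \mathfrak{a}$; therefore $\mathcal{I}_{Z_n} = (\sigma^* p_{rs} : r+s>0) \subseteq \mathfrak{a}$ and there is a surjection $\shf{Z_n} \twoheadrightarrow B_n/J$. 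To see it is an isomorphism I would compute fibres. At $I \notin Z_n$ some mass of $V(I)$ lies away from the origin, so $x$ or $y$ is invertible on the non-origin local factors and a traceless idempotent $\epsilon_0 - \tfrac{\ell_0}{n}\cdot 1$ (with $\epsilon_0$ the projector onto the origin factor, of length $\ell_0 < n$) is nonzero at the origin point; together with $x,y$ these generate the unit ideal in the fibre, so $(B_n/J)\otimes \kappa(I) = 0$. At $I \in Z_n$ the operators $x,y$ are nilpotent, whence $\tr(b') = n\, b'(0)$ for every section $b'$, so traceless sections vanish at the single point and the fibre is one-dimensional. Thus $B_n/J$ is supported on $Z_n$ with generic rank one there; as $Z_n$ is integral (irreducible by \cite{Bri77}, reduced via the isomorphism $\pi^{-1}(Z_n)^{red} \cong Z_n$ of Theorem~\ref{T:cmHilbert}), the surjection from the torsion-free sheaf $\shf{Z_n}$ is generically an isomorphism and hence an isomorphism.

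Finally I would show $K_\bullet$ is acyclic in positive degrees. The universal family $F_n$, finite and flat over the smooth variety $\Hil{n}$ via $\pi$, is Cohen--Macaulay of dimension $2n$, while the support of $B_n/J$ is $\pi^{-1}(Z_n)$, of dimension $n-1$ by Theorem~\ref{T:cmHilbert} and Corollary~\ref{C:dimnested}. Hence $J$ has height $2n-(n-1) = n+1$, equal to its number of generators. In a Cohen--Macaulay ring an ideal whose height equals the number of its generators is generated by a regular sequence, so the Koszul complex $K_\bullet$ is a resolution of $B_n/J \cong \shf{Z_n}$, and reading it out gives exactly the displayed complex. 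Since $\pi$ is affine the complex already lives on $\Hil{n}$ and resolves $\shf{Z_n}$ as an $\shf{\Hil{n}}$-module.

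The main obstacle is the middle step: pinning down the scheme structure of $B_n/J$, that is, showing the ideal $J = (x,y,B'_n)$ is neither too large nor too small. This rests on the trace computation above (traceless sections vanish exactly on the full-collision locus) together with the reducedness of $Z_n$. Granting that identification, both the construction of the Koszul complex and its exactness follow formally from the Cohen--Macaulay and codimension facts already in hand.
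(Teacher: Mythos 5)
A point of context first: the paper does not actually prove Proposition \ref{p:O_zero} --- it quotes it from \cite{Hai98} and \cite{Hai02} --- so your proposal must be measured against Haiman's argument, whose ingredients the paper records in Theorem \ref{T:cmHilbert} and the paragraph following it. Most of your proposal is correct and is in substance Haiman's route: the equivariant Koszul complex on $E=B_n'\oplus\shf{t}\oplus\shf{q}$ with $H_0=B_n/J$; the two fibre computations (the traceless idempotent $\epsilon_0-\frac{\ell_0}{n}\cdot 1$ off $Z_n$, the nilpotency/trace argument on $Z_n$) showing $\mathrm{supp}(B_n/J)=Z_n$ as sets; and the exactness mechanism, namely that $F_n$ is Cohen--Macaulay (finite and flat over the smooth $\Hil{n}$) while $J$ is locally generated by $n+1$ elements and has height $n+1$, hence is locally generated by a regular sequence. (Minor slip: $\dim Z_n=n-1$ is Brian\c{c}on's theorem \cite{Bri77}; Corollary \ref{C:dimnested} concerns $Z_{n,n-1}$.) The genuine gap is the middle identification $B_n/J\cong\shf{Z_n}$, and it sits exactly on the nontrivial content of the proposition.

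Your identification needs two things at once: (i) $\mathcal{I}_{Z_n}=(\sigma^{*}p_{rs}\colon r+s>0)$, to produce the surjection $\shf{Z_n}\twoheadrightarrow B_n/J$, and (ii) $Z_n$ integral, so that the kernel, being torsion, vanishes. If $Z_n$ carries its reduced structure --- the reading under which Brian\c{c}on's theorem (``irreducible subvariety'') and Theorem \ref{T:cmHilbert} are stated --- then (ii) is free but (i) is precisely the assertion that the power-sum ideal is radical, which you state without proof. If instead $Z_n$ means the scheme-theoretic fibre $\sigma^{-1}(n\cdot[0])$, then (i) is definitional (Weyl's theorem), but (ii) becomes that same radicality statement, and it does not follow from Theorem \ref{T:cmHilbert} as you claim: that theorem relates the two \emph{reduced} schemes $\pi^{-1}(Z_n)^{red}$ and $Z_n$ and says nothing about the scheme structure cut out by the power sums. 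Either way you have assumed the one fact that makes the proposition a theorem. The repair uses what you already proved, and here Haiman's logic runs opposite to yours: he does not deduce the identification from reducedness, he deduces reducedness from the complete intersection property. Since $V(J)\subseteq F_n$ is a complete intersection (your height count), it is Cohen--Macaulay and so has no embedded components; a generic-reducedness check at a generic (curvilinear) point of the fibre then gives, via Serre's criterion, that $V(J)$ is reduced, hence $V(J)=\pi^{-1}(Z_n)^{red}\cong Z_n$ by Theorem \ref{T:cmHilbert}, and pushing forward along the affine $\pi$ yields the algebra isomorphism. Faster, inside this paper: the paragraph after Theorem \ref{T:cmHilbert} states that the ideal sheaf of $\pi^{-1}(Z_n)^{red}$ is locally $(x,y,p_{hk}\colon (h,k)\in\mu\setminus(0,0))$; since $B_n'$ is locally spanned over $\shf{\Hil{n}}$ by the traceless sections $x^hy^k-\frac{1}{n}\sigma^{*}p_{hk}$, that ideal equals your $J$, and the identification is immediate. (If you insist on the scheme-theoretic fibre, prove the missing inclusion $\mathfrak{a}\subseteq(\sigma^{*}p_{rs})$ formally: write $f\in J\cap\shf{\Hil{n}}$ as $f=xa+yb+\sum_i b_i'c_i$ and apply $\frac{1}{n}\tr$ termwise; every term lands in $(\sigma^{*}p_{rs})$, so no reducedness is needed at all.)
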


The Koszul resolution in the above proposition enables us to compute the Hilbert series of the fiber $M(m)(I_{\mu})$ of $M(m):=\shf{}(m)\otimes \shf{Z_n}$ at monomial ideals. 
The following fact is well known. For an exact sequence, the Hilbert series of the initial term is equal to the alternating sum of the Hilbert series of the remaining terms. Therefore, by the remark~\ref{R:toruscharacter} we can calculate $\Hs{\shf{Z_n,I_{\mu}}}(q,t)$ as follows
\begin{eqnarray*}
\Hs{\shf{Z_n,I_{\mu}}}(q,t) &=& \sum_{i=0}^{n+1} (-1)^i \Hs{B_n\otimes \wedge ^{i}(B'_n\oplus \shf{t}\oplus \shf{q})(I_{\mu})}(q,t) \\
&=& \sum_{i=0}^{n+1} (-1)^i \tr_{B_n\otimes \wedge ^{i}(B_n'\oplus \shf{t}\oplus \shf{q})(I_{\mu})}(q,t).
\end{eqnarray*}

Now, tensoring $\shf{Z_n}$ with $\shf{}(m)$ has the effect of multiplying by the torus character 
\begin{equation*}
\prod_{\zeta\in \mu} t^{m\cdot l'(\zeta)}q^{m\cdot a'(\zeta)} = t^{m\cdot n(\mu)}q^{m\cdot n(\mu')},
\end{equation*}
where $n(\mu)= \sum \mu_i(i-1)$. Hence, the Hilbert series of the fiber 
$M(m)(I_{\mu})$  is equal to

\begin{equation}\label{E:numerator}
t^{mn(\mu)}q^{mn(\mu')} \left( \sum_{(h,k)\in \mu} t^hq^k \right) (1-t)(1-q) \prod _{(h,k)\in \mu\setminus \{(0,0)\}} (1-t^{h}q^{k}).
\end{equation}

Putting everything together and using Atiyah-Bott Lefschetz formula, the Euler characteristic of $M(m)$ is equal to 
\begin{equation}\label{E:generalizedCatalan}
\chi_{\Hil{n}, M(m)}(q,t) = \sum _{|\mu | = n} \frac{t^{mn(\mu)}q^{mn(\mu')} ( \sum_{(h,k)\in \mu} t^hq^k ) (1-t)(1-q) \prod _{(h,k)\in \mu\setminus \{(0,0)\}} (1-t^{h}q^{k})}{\prod _{\zeta \in \mu} (1-t^{1+l(\zeta)}q^{-a(\zeta)}) (1-t^{-l(\zeta)}q^{1+a(\zeta)})}
\end{equation}

\begin{rem}
Let us denote this rational expression by $C_n^{(m)}(q,t)$. Recall the definition of the nabla $\nabla$ operator from the introduction. 
It turns out \cite{GaHa96} that $C_n^{(m)}(q,t)=\langle \nabla^m e_n, e_n \rangle$. Note that for $m=1$, $C_n^{(1)}(q,t)$ is equal to the $q,t$-Catalan series that we have defined earlier in detail. 
We should mention here that for $m\geq 2$, there are conjectural combinatorial formulations similar to that of $m=1$ (see \cite{hhlru05}).
\end{rem}

Now, we turn back to the nested Hilbert schemes. Since the fiber of the locally free sheaf $\eta^*(M(m))=\eta^*(\shf{}(m)\otimes \shf{Z_n})$ at $(I_{\mu},I_{\nu})$ is the same as the fiber at $I_{\mu}$ of $M(m)$, the Hilbert series of the fiber $\eta^*(M(m))(I_{\mu},I_{\nu})$ is the same as (\ref{E:numerator}) above. Therefore, we obtain

\begin{prop}\label{T:nestedCharacter} Let $\eta:\Hil{n,n-1} \rightarrow \Hil{n}$ be the projection. 
Then the Euler characteristic $\chi _{\Hil{n,n-1},\eta^*(\shf{}(m)\otimes \shf{Z_n})}(q,t)$ is 
equal to the rational expression 
\begin{equation}\label{E:nestedCatalan}
\sum _{(\mu,\nu)\in \y{n,n-1} } \frac{t^{mn(\mu)}q^{mn(\mu')} (\sum_{(h,k)\in \mu} t^hq^k) (1-t)(1-q) \prod _{(h,k)\in \mu\setminus (0,0)} (1-t^{h}q^{k})}{
(1-t)(1-q)P_1(\mu,\nu)P_2(\mu,\nu)P_3(\mu,\nu)},
\end{equation}
where the sum is over all nested pairs $(\mu,\nu)\in \y{n,n-1}$, and if $\zeta$ is the cell of $\mu$ that is not contained in $\nu$, then the terms $P_1,P_2$ and $P_3$ appearing 
in denominator are given by
\begin{eqnarray*}
P_1(\mu,\nu) &=& \prod_{\zeta \in \mu \setminus (
\row(\zeta)\cup \col(\zeta))} (1-t^{1+l(\zeta)}q^{-a(\zeta)})(1-t^{-l(\zeta)}q^{1+a(\zeta)})\\
P_2(\mu,\nu) &=& \prod_{\zeta \in 
\row(\zeta) }(1-t^{1+l(\zeta)}q^{-a(\zeta)})(1-t^{-l(\zeta)}q^{a(\zeta)})\\
P_3(\mu,\nu) &=& \prod_{\zeta \in \col(\zeta) }(1-t^{-l(\zeta)}q^{1+a(\zeta)})(1-t^{l(\zeta)}q^{-a(\zeta)}).
\end{eqnarray*}  
\end{prop}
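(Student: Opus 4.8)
The plan is to read off (\ref{E:nestedCatalan}) as the single instance of the nested Atiyah--Bott formula of Theorem \ref{T:nestedABL} (equation (\ref{E:NAB})) obtained by taking $M=\eta^*(\shf{}(m)\otimes \shf{Z_n})$. The denominators in (\ref{E:NAB}) are intrinsic to the fixed points: the factors $(1-t)(1-q)P_1(\mu,\nu)P_2(\mu,\nu)P_3(\mu,\nu)$ are assembled purely from the cotangent eigenvalues recorded in (\ref{E:nestedeigenvalues}) and do not depend on the bundle $M$. Hence they already coincide verbatim with the denominator of (\ref{E:nestedCatalan}), and the whole task reduces to computing the numerator, namely the fiber Hilbert series $\Hs{\eta^*(\shf{}(m)\otimes \shf{Z_n})(I_\mu,I_\nu)}(q,t)$ at each torus fixed point indexed by $(\mu,\nu)\in\y{n,n-1}$.

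First I would compute this fiber. Because $\eta$ sends $(I_1,I_2)$ to $I_1$, the composite of the fixed-point inclusion $\{(I_\mu,I_\nu)\}\hookrightarrow \Hil{n,n-1}$ with $\eta$ is the inclusion $\{I_\mu\}\hookrightarrow \Hil{n}$; restricting a pull-back therefore returns the fiber at $I_\mu$ of the sheaf on $\Hil{n}$. Thus $\Hs{\eta^*(\shf{}(m)\otimes \shf{Z_n})(I_\mu,I_\nu)}(q,t)$ equals the fiber Hilbert series of $M(m)=\shf{}(m)\otimes \shf{Z_n}$ at $I_\mu\in\Hil{n}$, which is exactly the expression (\ref{E:numerator}) already derived: from the $\TT^2$-equivariant Koszul resolution (\ref{e:res}) of Proposition \ref{p:O_zero}, additivity of Hilbert series on exact sequences, the identity $\sum_i(-1)^i\tr_{\wedge^i V}(\tau)=\det_V(1-\tau)$, the fiber characters $\sum_{(h,k)\in\mu}t^hq^k$ of $B_n$ and $\sum_{(h,k)\in\mu\setminus(0,0)}t^hq^k$ of $B'_n$ at $I_\mu$, and the twist by $\shf{}(m)$ contributing $t^{mn(\mu)}q^{mn(\mu')}$. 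Substituting (\ref{E:numerator}) into the numerator of (\ref{E:NAB}) and summing over $\y{n,n-1}$ then produces (\ref{E:nestedCatalan}) term by term.

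The one point that genuinely requires care, and which I expect to be the main obstacle, is that $\shf{Z_n}$ is supported on the zero fiber, so $\shf{}(m)\otimes\shf{Z_n}$ and its pull-back are not honest vector bundles, whereas Theorem \ref{T:nestedABL} is stated for locally free $M$. I would handle this by applying $\eta^*$ to the resolution (\ref{e:res}): each term $B_n\otimes\wedge^i(B'_n\oplus\shf{t}\oplus\shf{q})$ is locally free, and pull-backs of locally free sheaves are locally free vector bundles on $\Hil{n,n-1}$, to which (\ref{E:NAB}) applies directly; moreover the fiber of each pulled-back term at $(I_\mu,I_\nu)$ agrees with the fiber of the original term at $I_\mu$ by the same composition-of-inclusions argument. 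The delicate issue is that $\eta$ need not be flat, so one must argue that the alternating sum of these term-by-term contributions really computes the equivariant Euler characteristic of the pull-back rather than something corrected by spurious higher $\Tor$. I would settle this by working throughout in the $\TT^2$-equivariant $K$-theory $K_0(\TT^2,-)_{(0)}$ in which Thomason's localization (\ref{T:Thomason}) was set up: there both the equivariant Euler characteristic and the fixed-point (derived) restrictions are additive on classes, the class of the pull-back is the alternating sum of the classes of the locally free pulled-back Koszul terms, and the virtual fiber entering the localization formula is compatible with $\eta$ via $\eta\circ i_{(I_\mu,I_\nu)}=i_{I_\mu}$. Once this $K$-theoretic compatibility is in place, nothing remains beyond the substitution of (\ref{E:numerator}) described above.
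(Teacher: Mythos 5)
Your proposal is correct and is essentially the paper's own proof: the paper likewise obtains (\ref{E:nestedCatalan}) by substituting into the nested Atiyah--Bott formula (\ref{E:NAB}) the fiber Hilbert series (\ref{E:numerator}), itself derived from the Koszul resolution (\ref{e:res}) of Proposition \ref{p:O_zero}, together with the observation that the fiber of $\eta^*(\shf{}(m)\otimes \shf{Z_n})$ at $(I_{\mu},I_{\nu})$ equals the fiber of $\shf{}(m)\otimes \shf{Z_n}$ at $I_{\mu}=\eta(I_{\mu},I_{\nu})$. Your closing paragraph on non-local-freeness flags a genuine subtlety that the paper glosses over (it simply calls $\eta^*(\shf{}(m)\otimes\shf{Z_n})$ locally free and invokes the bundle version of the formula), though your key claim there --- that the class of the underived pull-back equals the alternating sum of the pulled-back Koszul terms even though $\eta$ is not flat --- is asserted rather than proved, so on that one point you are exactly as (in)complete as the paper itself.
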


We now (re)state our conjecture that the Euler characteristic 
\begin{equation}
\mathcal{N}^{(m)}_n(q,t) := \chi _{\Hil{n,n-1},M(m)}(q,t)
\end{equation}
of the pull back of the sheaf $M(m)$ is a polynomial in $q$ and $t$ with nonnegative coefficients. We further conjecture that $\mathcal{N}^{(1)}_n(q,t)$ is equal to the combinatorial nested $q,t$-Catalan series $N_n(q,t)$ that we have defined in \ref{D:nestedcatalan}.

From now on we would like to call $\mathcal{N}^{(m)}_n(q,t)$ as the (generalized) \textsl{nested $q,t$-Catalan series}. 
Looking at the small values of $n$, we also conjecture that 
\begin{equation}\label{E:ncatalans}
\mathcal{N}^{(1)}_n(1,1)=N_n(1,1)=\frac{n}{2} C_{n+1}^{(1)}(1,1)=\frac{n}{2(n+1)} {2(n+1) \choose n+1}.
\end{equation}

Maple experiments further indicates that the nested Catalan series has properties similar to those of $C^{(m)}_n(q,t)$, i.e., the bivariate symmetry $\mathcal{N}^{(m)}_n(q,t)=\mathcal{N}^{(m)}_n(t,q)$. For these reasons and others, we believe that the nested $q,t$-Catalan series deserves further investigations.

\section{\textbf{Final remarks}}
The following connections of the $\TT^2$-weights on the (co)tangent spaces of the smooth nested Hilbert schemes with Pieri-Macdonald coeffients have been pointed to the author by Mark Haiman. 

We use the notation as before and denote by $\nu\in \y{m}-1$ the partition obtained from $\mu$ by taking off a corner cell $\zeta \in \mu$. In ~\cite{GaHa98}, using the Pieri rules (VI.6 of ~\cite{Mac95}) for the \textit{original} Macdonald polynomials, Garsia and Haiman show that the coefficient of the modified Macdonald polynomial $\widetilde{H_{\mu}}(z;q,t)$ in the symmetric function $e_1(z)\widetilde{H_{\nu}}(z;q,t)$ is given, in our notation, by 
\begin{equation*}
d_{\mu \nu}(q,t) = \prod_{\alpha \in 
\row(\zeta)} \frac{q^{a(\alpha)-1}-t^{l(\alpha)+1}}{q^{a(\alpha)}-t^{l(\alpha)+1}} \prod_{\alpha \in \col(\zeta)} \frac{t^{l(\alpha)-1}-q^{a(\alpha)+1}}{t^{l(\alpha)}-q^{a(\alpha)+1}}.
\end{equation*}
Similarly, for the dual $\partial_{p_1}$ of the multiplication by $e_1$, it is shown in the same paper that the coefficient of the modified Macdonald polynomial $\widetilde{H_{\nu}}(z;q,t)$ in $\partial_{p_1} \widetilde{H_{\mu}}(z;q,t)$ can be written, in our notation, as
\begin{equation*}
c_{\mu \nu}(q,t) =  \prod_{\alpha \in 
\row(\zeta)} \frac{t^{l(\alpha)}-q^{a(\alpha)+1}}{t^{l(\alpha)}-q^{a(\alpha)}} \prod_{\alpha \in \col(\zeta)} \frac{q^{a(\alpha)}-t^{l(\alpha)+1}}{q^{a(\alpha)}-t^{l(\alpha)}}.
\end{equation*}

Let us denote by $\Pi_{\mu,\nu}$ the product $(1-t)(1-q)P_1(\mu,\nu)P_2(\mu,\nu)P_3(\mu,\nu)$ which comes from the $\TT^2$-weights on the cotangent space at the torus fixed point $(I_{\mu},I_{\nu})$ of $\Hil{n,n-1}$. Likewise, let us denote by $\Pi_{\mu}$ and $\Pi_{\nu}$ the products  
\begin{equation*}
\prod_{\alpha \in \mu} (1-t^{1+l(\alpha)}q^{-a(\alpha)}) (1-t^{-l(\alpha)}q^{1+a(\alpha)})\ \text{and}\ 
\prod_{\alpha\in \nu} (1-t^{1+l_{\nu}(\alpha)}q^{-a_{\nu}(\alpha)}) (1-t^{-l_{\nu}(\alpha)}q^{1+a_{\nu}(\alpha)})
\end{equation*}
coming from the $\TT^2$-action on the cotangent spaces at the torus fixed points $I_{\mu}$ of $\Hil{n}$ and $I_{\nu}$ of $H_{n-1}$, respectively.
\textit{Caution:} we are using subscripts in $a_{\nu}(x)$ and $l_{\nu}(x)$ to distinguish the arm and the leg of $x\in \nu \subseteq \mu$ from the arm and leg computed in the partition $\mu$. 
Then, after making the necessary cancellations we see that 
\begin{equation}\label{E:MacPieri}
c_{\mu \nu}= \Pi_{\mu} / \Pi_{\mu,\nu}\  \mbox{and}\  d_{\mu \nu}= t^{l'(\zeta)}q^{a'(\zeta)} \Pi_{\nu}/(1-t)(1-q) \Pi_{\mu,\nu}.
\end{equation}
 
It is also pointed to us by Mark Haiman that there is yet another way to derive the equalities of ~\ref{E:MacPieri} in a more conceptual way using the \textit{isospectral} Hilbert scheme of points, however, we are not going to record it here.

Finally, we would like to mention that the author and J. Haglund have conjectural symmetric function interpretations of the nested $q,t$-Catalan series. We also have a generalization of the $q,t$-Catalan series with four parameters. We shall write these in a forth coming paper.

\textbf{Acknowledgements.} This work has been done while the author was a graduate student. The author would like to thank to his advisor J. Haglund for, among many other things, letting him work on the subject that the author found interesting. The author also would like to thank M. Haiman for brief but extremely valuable conversations.

\section{\textbf{Tables}}\label{S:tables}

Here are the few nested $q,t$-Catalan series. 

\begin{eqnarray*}
\mathcal{N}^{(1)}_1(q,t)=t+q+t^2+tq+q^2
\end{eqnarray*}

\begin{eqnarray*}
\mathcal{N}^{(1)}_2(q,t) &=&
{t}^{5}+{t}^{4}+ {t}^{4}q+{t}^{3}+2q{t}^{3}+{q}^{2}{t}^{3}+2q{t}^{2}+2{q}^{2}{t}^{2}+ {q}^{3}{t}^{2}+qt+2{q}^{2}t+2{q}^{3}t+{q}^{4}t+\\ && {q}^{4}+{q}^{3}+{q}^{5}
\end{eqnarray*}

\begin{eqnarray*}
\mathcal{N}^{(1)}_3(q,t) &=& {q}^{8}+4{q}^{2}{t}^{4}+4{q}^{4}{t}^{3}+2{t}^{4}q+2{q}^{7}t+ {t}^{8}+{t}^{3}{q}^{6}+ {q}^{6}+{q}^{8}t+ 
4{q}^{2}{t}^{5}+ 2{q}^{3}{t}^{5}+ {q}^{4}{t}^{5}+ \\ && {q}^{3}{t}^{6}+
{q}^{5}{t}^{4}+2{q}^{2}{t}^{6}+3{t}^{6}q+3{t}^{5}q+{q}^{7}{t}^{2} 
+{t}^{7}+ 4{q}^{3}{t}^{4}+2{q}^{6}{t}^{2}+2{q}^{4}{t}^{4}+ {t}^{6}+{q}^{7}+
\\ && {t}^{9}+2{q}^{4}t+{q}^{9}+3{q}^{5}t+{t}^{8}q+{t}^{7}{q}^{2}+ 
4{q}^{5}{t}^{2}+{q}^{2}{t}^{2}+q{t}^{3}+{q}^{3}t+3{q}^{2}{t}^{3}+ 
5{q}^{3}{t}^{3}+\\ && 3{q}^{3}{t}^{2}+3t{q}^{6}+4{q}^{4}{t}^{2}+ 
2{t}^{3}{q}^{5}+2q{t}^{7}
\end{eqnarray*}

\begin{eqnarray*}
\mathcal{N}^{(1)}_4(q,t) &=& {q}^{10}{t}^{4}+2{q}^{12}t+2{q}^{4}{t}^{9}+
4{q}^{3}{t}^{9}+{q}^{2}{t}^{4}+4{q}^{4}{t}^{3}+4{t}^{7}{q}^{5}+{t}^{12}+
{t}^{13}+ 2{q}^{7}t+9{t}^{3}{q}^{6}+\\ && 4{q}^{6}{t}^{6}+2{q}^{11}{t}^{2}+4{q}^{9}{t}^{3}+4{q}^{10}t+4{q}^{7}{t}^{5}+2{q}^{8}{t}^{5}+4{q}^{2}{t
}^{10}+ 4{q}^{8}{t}^{4}+3{q}^{11}t+{q}^{8}{t}^{6}+\\ && {q}^{11}{t}^{3}+{
q}^{9}{t}^{5}+{q}^{13}t+{t}^{10}+3{t}^{11}q+4{t}^{10}q+9{t}^{3}{
q}^{7}+{q}^{12}{t}^{2}+ 9{t}^{7}{q}^{3}+7{t}^{7}{q}^{4}+4{t}^{8}{
q}^{4}+ \\ && 2{q}^{10}{t}^{3}+ 2{q}^{9}{t}^{4}+3{q}^{8}t+7{q}^{2}{t}^
{8}+7{q}^{3}{t}^{8}+2{q}^{2}{t}^{5}+7{q}^{3}{t}^{5}+ 10{t}^{5}{
q}^{5}+ 10{q}^{4}{t}^{5}+ 7{q}^{5}{t}^{6}+ \\ && 10{q}^{4}{t}^{6}+ 7{q}^
{6}{t}^{5}+ 9{q}^{3}{t}^{6}+10{q}^{5}{t}^{4}+5{q}^{2}{t}^{6}+
{t}^{6}q+2q{t}^{12}+{q}^{2}{t}^{12}+{t}^{8}{q}^{6}+{q}^{13}+\\ &&
6{q}^{7}{t}^{2}+ 4{q}^{3}{t}^{4}+5{q}^{6}{t}^{2}+ 8{q}^{4}{t}^{4}+{q}^{12}+{q}^{14}+{q}^{11}+ 2{t}^{8}{q}^{5}+2{t}^{7}{q}^{6}+{t}^{7}{q}^{7}+
3{t}^{8}q+ \\ && 6{t}^{7}{q}^{2}+2{q}^{5}{t}^{2}+7{q}^{8}{t}^{2}+ {q}^{
3}{t}^{3}+2{t}^{6}{q}^{7}+ t{q}^{6}+{q}^{4}{t}^{2}+7{t}^{3}{q}^{5}+
6{t}^{2}{q}^{9}+10{t}^{4}{q}^{6}+2q{t}^{7}+\\ && 4q{t}^{9}+ {t}^{11}+
{t}^{14}+ 4{q}^{9}t+6{q}^{2}{t}^{9}+{q}^{10}+7{q}^{8}{t}^{3}+
7{q}^{7}{t}^{4}+ {t}^{9}{q}^{5}+ q{t}^{13}+2{q}^{2}{t}^{11}+ \\ && 
{q}^{3}{t}^{11}+{q}^{4}{t}^{10}+2{q}^{3}{t}^{10}+4{q}^{10}{t}^{2}
\end{eqnarray*}

\begin{eqnarray*}
\mathcal{N}^{(2)}_2(q,t) &=& {t}^{3}+{t}^{2}+q{t}^{2}+qt+{q}^{2}t+{q}^{2}+{q}^{3}
\end{eqnarray*}

\begin{eqnarray*}
\mathcal{N}^{(2)}_3(q,t) &=& 
{q}^{8}+3{q}^{2}{t}^{4}+2{q}^{4}{t}^{3}+{t}^{4}q+{q}^{7}t+{t}^{8}+
{q}^{6}+2{q}^{2}{t}^{5}+{q}^{3}{t}^{5}+{q}^{2}{t}^{6}+2{t}^{6}q+2
{t}^{5}q+\\ && {t}^{7}+ 2{q}^{3}{t}^{4}+ {q}^{6}{t}^{2}+{q}^{4}{t}^{4}+{t}
^{6}+{q}^{7}+{q}^{4}t+2{q}^{5}t+2{q}^{5}{t}^{2}+{q}^{2}{t}^{2}+ 
2{q}^{2}{t}^{3}+3{q}^{3}{t}^{3}+ \\ && 2{q}^{3}{t}^{2}+2t{q}^{6}+3{q}^
{4}{t}^{2}+{t}^{3}{q}^{5}+q{t}^{7}
\end{eqnarray*}

\begin{eqnarray*}
\mathcal{N}^{(2)}_4(q,t) &=& 
4{q}^{4}{t}^{9}+6{q}^{3}{t}^{9}+6{t}^{7}{q}^{5}+4{q}^{8}{t}^{5
}+7{q}^{8}{t}^{3}+{q}^{5}{t}^{10}+{q}^{4}{t}^{11}+4{t}^{3}{q}^{6}+ 
6{q}^{6}{t}^{6}+ 4{q}^{9}{t}^{4}+\\ && 8{q}^{7}{t}^{4}+{q}^{13}+6{q}^
{8}{t}^{4}+2{t}^{4}{q}^{10}+4{q}^{10}{t}^{3}+2{q}^{9}{t}^{5}+{t}
^{14}q+2{t}^{13}q+{t}^{9}{q}^{6}+3{q}^{12}t+\\ && 2{q}^{11}{t}^{3}+{q}
^{9}{t}^{6}+{q}^{11}{t}^{4}+ 6{q}^{7}{t}^{5}+2{q}^{8}{t}^{6}+6{t}
^{3}{q}^{7}+2{q}^{13}t+6{t}^{7}{q}^{3}+8{t}^{7}{q}^{4}+{t}^{13}{
q}^{2}+\\ && 6{t}^{8}{q}^{4}+4{q}^{2}{t}^{8}+7{q}^{3}{t}^{8}+{q}^{14}+
{q}^{12}{t}^{3}+{q}^{14}t+{q}^{3}{t}^{5}+8{t}^{5}{q}^{5}+5{q}^{4}{
t}^{5}+8{q}^{5}{t}^{6}+8{q}^{4}{t}^{6}+\\ && 5{q}^{9}{t}^{2}+8{q}^{6
}{t}^{5}+4{q}^{3}{t}^{6}+5{q}^{5}{t}^{4}+{q}^{2}{t}^{6}+2{t}^{8}
{q}^{6}+2{q}^{7}{t}^{2}+{t}^{12}+{t}^{13}+{q}^{6}{t}^{2}+2{q}^{4}{
t}^{4}+\\ && 2{t}^{10}q+5{t}^{10}{q}^{2}+{q}^{12}+4{t}^{8}{q}^{5}+4{
t}^{7}{q}^{6}+2{t}^{7}{q}^{7}+{q}^{15}+{t}^{7}{q}^{8}+2{t}^{7}{q}^
{2}+4{q}^{8}{t}^{2}+4{t}^{6}{q}^{7}+\\ && 5{t}^{2}{q}^{10}+{q}^{13}{t}
^{2}+{t}^{3}{q}^{5}+3{t}^{11}q+{t}^{5}{q}^{10}+2{q}^{12}{t}^{2}+4
{q}^{11}{t}^{2}+4{t}^{10}{q}^{3}+2{t}^{10}{q}^{4}+4{q}^{2}{t}^
{11}+\\ && {t}^{14}+8{t}^{4}{q}^{6}+{t}^{8}{q}^{7}+{q}^{3}{t}^{12}+q{t}^{9
}+{q}^{9}t+5{q}^{2}{t}^{9}+3{t}^{12}q+2{t}^{11}{q}^{3}+2{q}^{2
}{t}^{12}+2{t}^{9}{q}^{5}+\\ && {t}^{15}+6{q}^{9}{t}^{3}+2{q}^{10}t+3
{q}^{11}t
\end{eqnarray*}

\begin{eqnarray*}
\mathcal{N}^{(3)}_3(q,t) &=& 
2{q}^{4}{t}^{3}+{q}^{7}t+{q}^{8}{t}^{3}+3{t}^{3}{q}^{6}+{q}^{7}{t}
^{4}+2{t}^{3}{q}^{7}+2{t}^{7}{q}^{3}+{t}^{7}{q}^{4}+2{q}^{8}t+
2{q}^{2}{t}^{8}+{q}^{3}{t}^{8}+\\ && {q}^{2}{t}^{5}+3{q}^{3}{t}^{5}+2{t
}^{5}{q}^{5}+3{q}^{4}{t}^{5}+{q}^{5}{t}^{6}+2{q}^{4}{t}^{6}+{q}^{9
}{t}^{2}+{q}^{6}{t}^{5}+3{q}^{3}{t}^{6}+3{q}^{5}{t}^{4}+ \\ && 
2{q}^{2}{t}^{6}+3{q}^{7}{t}^{2}+2{q}^{3}{t}^{4}+2{q}^{6}{t}^{2}+3{q}^{
4}{t}^{4}+{t}^{9}+{t}^{10}q+{q}^{10}+{q}^{11}+{q}^{9}+2{t}^{8}q+\\ &&
3{t}^{7}{q}^{2}+{q}^{5}{t}^{2}+2{q}^{8}{t}^{2}+{q}^{3}{t}^{3}+3{t}^{
3}{q}^{5}+2{t}^{4}{q}^{6}+q{t}^{7}+2q{t}^{9}+2{q}^{9}t+{q}^{2}{t
}^{9}+{t}^{10}+\\ && {t}^{11}+{q}^{10}t
\end{eqnarray*}

\begin{eqnarray*}
\mathcal{N}^{(3)}_4(q,t) &=& 
6{t}^{9}{q}^{9}+9{q}^{4}{t}^{12}+6{q}^{3}{t}^{12}+11{t}^{10}{q
}^{5}+{q}^{20}+4{q}^{4}{t}^{9}+{q}^{3}{t}^{9}+2{t}^{7}{q}^{5}+2{
q}^{6}{t}^{6}+2{q}^{10}{t}^{3}+\\ && {q}^{8}{t}^{4}+{q}^{12}{t}^{2}+{q}^{9
}{t}^{3}+4{q}^{11}{t}^{3}+6{q}^{12}{t}^{3}+4{q}^{9}{t}^{4}+12{
t}^{9}{q}^{6}+2{q}^{6}{t}^{14}+4{q}^{5}{t}^{14}+{q}^{9}{t}^{12}+\\ &&
10{t}^{6}{q}^{10}+6{t}^{5}{q}^{13}+10{t}^{5}{q}^{9}+10{q}^{9}{t}
^{7}+11{q}^{8}{t}^{6}+7{q}^{13}{t}^{3}+{q}^{15}t+7{q}^{14}{t}^{3
}+9{q}^{12}{t}^{4}+\\ && 10{q}^{11}{t}^{5}+8{q}^{12}{t}^{5}+7{q}^{10
}{t}^{4}+12{q}^{9}{t}^{6}+2{t}^{5}{q}^{7}+8{t}^{9}{q}^{8}+10{t
}^{8}{q}^{8}+2t{q}^{16}+{t}^{8}{q}^{4}+\\ && 
{t}^{15}q+5{t}^{15}{q}^{2}+ 2{q}^{11}{t}^{9}+8{t}^{12}{q}^{5}+{q}^{8}{t}^{13}+2{q}^{7}{t}^{13}+8{t}^{4}{q}^{13}+4{q}^{12}{t}^{7}+{q}^{13}{t}^{8}+\\ &&
6{q}^{6}{t}^{12}+{q}^{15}{t}^{6}+4{q}^{6}{t}^{13}+4{q}^{8}{t}^{11}+6{q}^{7
}{t}^{11}+{q}^{10}{t}^{11}+6{q}^{5}{t}^{13}+2{q}^{9}{t}^{11}+2{q
}^{10}{t}^{10}+\\ && {q}^{11}{t}^{10}+11{t}^{8}{q}^{6}+3{q}^{18}t+4{q}
^{16}{t}^{3}+6{q}^{12}{t}^{6}+2{q}^{17}{t}^{3}+2{q}^{19}t+{q}^{
18}{t}^{3}+2{q}^{16}{t}^{4}+\\ && 2{q}^{15}{t}^{5}+{q}^{16}{t}^{5}+6{q
}^{14}{t}^{4}+4{q}^{13}{t}^{6}+6{q}^{8}{t}^{5}+9{q}^{11}{t}^{4}+
{q}^{12}{t}^{9}+{q}^{20}t+6{t}^{8}{q}^{5}+7{t}^{7}{q}^{6}+\\ && 12{t}^
{7}{q}^{7}+12{t}^{7}{q}^{8}+{t}^{16}{q}^{5}+7{t}^{6}{q}^{7}+{t}^{
18}+{t}^{19}+4{t}^{14}{q}^{2}+4{q}^{4}{t}^{15}+2{q}^{4}{t}^{16}+
q{t}^{20}+\\ && 2q{t}^{19}+8{t}^{8}{q}^{9}+{q}^{19}+{q}^{18}+{q}^{17}{t}
^{4}+11{q}^{10}{t}^{5}+4{q}^{15}{t}^{4}+8{q}^{10}{t}^{7}+4{q}^
{17}{t}^{2}+{q}^{19}{t}^{2}+\\ && 2{q}^{18}{t}^{2}+2{t}^{10}{q}^{3}+7{
t}^{10}{q}^{4}+{t}^{12}{q}^{2}+12{t}^{8}{q}^{7}+8{q}^{11}{t}^{6}+2
{t}^{13}{q}^{2}+4{t}^{9}{q}^{10}+2{q}^{14}{t}^{6}+\\ && 4{q}^{11}{t}
^{8}+2{q}^{12}{t}^{8}+9{t}^{11}{q}^{4}+2{t}^{7}{q}^{13}+4{q}^{
3}{t}^{11}+6{q}^{4}{t}^{14}+{t}^{20}+4{t}^{16}{q}^{3}+2{t}^{15}{
q}^{5}+{t}^{15}{q}^{6}+\\ && {t}^{14}{q}^{7}+8{t}^{11}{q}^{6}+{q}^{14}{t}^
{7}+6{q}^{11}{t}^{7}+10{t}^{11}{q}^{5}+10{t}^{10}{q}^{6}+8{t}^
{10}{q}^{7}+6{t}^{10}{q}^{8}+6{q}^{10}{t}^{8}+\\ && 3{t}^{17}q+2{t}^
{16}q+10{t}^{9}{q}^{7}+4{q}^{14}{t}^{5}+2{q}^{8}{t}^{12}+4{q}^
{7}{t}^{12}+4{q}^{9}{t}^{10}+3{q}^{17}t+4{q}^{2}{t}^{17}+\\ && 5{q}^
{2}{t}^{16}+7{q}^{3}{t}^{14}+{q}^{21}+10{t}^{9}{q}^{5}+2{t}^{2}{
q}^{13}+5{q}^{15}{t}^{2}+4{q}^{14}{t}^{2}+5{q}^{16}{t}^{2}+{t}^{
21}+3{t}^{18}q+\\ && 2{t}^{18}{q}^{2}+6{t}^{15}{q}^{3}+8{q}^{4}{t}^{
13}+7{q}^{3}{t}^{13}+{t}^{19}{q}^{2}+{t}^{18}{q}^{3}+2{t}^{17}{q}^
{3}+{t}^{17}{q}^{4}+6{t}^{3}{q}^{15}
\end{eqnarray*}

\bibliographystyle{alpha}

\end{document}